\newcommand{\T}{\mathbb{T}}
\newcommand{\N}{\mathbb{N}}
\newcommand{\Z}{\mathbb{Z}}
\newcommand{\R}{\mathbb{R}}
\newcommand{\D}{\mathbb{D}}
\newcommand{\cont}{\mathcal{C}}
\DeclareMathOperator{\homeo}{Homeo(S)}
\DeclareMathOperator{\inte}{Int}
\DeclareMathOperator{\lleno}{Fill}
\newtheorem*{teo1}{Theorem A}
\newtheorem*{teo2}{Theorem B}
\newtheorem{teo}{Theorem}[section]
\newtheorem{cor}[teo]{Corollary}
\newtheorem{lema}[teo]{Lemma}
\newtheorem{prop}[teo]{Proposition}
\newtheorem*{teo0}{Theorem}
\newtheorem*{cor0}{Corollary}
\theoremstyle{definition}
\theoremstyle{remark}
\author{Alejandro Passeggi and Juliana Xavier}
\begin{document}

\title{Minimal sets for surface homeomorphisms}

\date{}
\title{A classification of minimal sets for surface homeomorphisms.}
\maketitle

\begin{abstract}

We classify minimal sets of (closed and oriented) hyperbolic surface homeomorphisms by studying the connected components of their complement. This extends the classification given by F. Kwakkel, T. Jager and A. Passeggi in the torus (see \cite{Kwajapa}).

The given classification is studied in the non-wandering setting and in light of the Nielsen-Thurston theory.

\end{abstract}

\begin{section}{Introduction.}\label{intro}

In \cite{Ferry} F. Kwakkel classified minimal sets of non-resonant toral homeomorphisms, that is, homeomorphisms which are homotopic to the identity and whose rotation set is a totally irrational vector. The classification was done studying the connected components in the complement of a minimal set. Later, in \cite{Kwajapa} F. Kwakkel, T. J\"ager and A. Passeggi extended this classification to general toral homeomorphism. Further, the authors studied consequences of the classification in the non-wandering setting and in the isotopy class of Anosov diffeomorphisms.

In this article we generalize this kind of classification to closed and oriented hyperbolic surfaces. Interesting differences with the toral case occur due to global topology. We also study the problem under the non-wandering assumption, and according to the isotopy class of the homeomemorphism, in light of the Nielsen-Thurston classification.

The main results of the article are contained in Theorem A and Theorem B; the former gives an initial classification depending on the homotopy type of the connected components of the complement of the minimal set, and the latter is a description of the topology of the minimal set in the case where there is a connected component of its complement that is homotopically non-trivial but neither contains all the homotopy of the surface.

\ Let us introduce now the preliminary definitions. All surfaces in this paper are assumed to be oriented. For a given closed surface $S$ we denote by $g(S)$ its genus. We consider hyperbolic surfaces as the quotient $S=\D/G$ where $\D$ is the open unit disk equipped with the Poincar\'{e} metric and $G$ the group of covering transformations. The universal covering projection is denoted by $p:\D\to S$. For any set $K\subset S$ we denote by $\Pi_0(K)$ the set of connected components of $K$.

\ We say that an open and connected set $V\subset S$ is essential if $$0\neq i_*(\pi_1(V,x_0))\neq \pi_1 (S,x_0),$$\noindent where $i_*:\pi_1(V,x_0)\to \pi_1 (S,x_0) $ is the homomorphism of fundamental groups induced by the inclusion map. For a compact set $C\subset S$ we define the set of essential connected components of its complement by $\mathcal{E}(\mathcal{C}^c)$.

We say that a minimal set $\mathcal{M}\subset S$ of $f\in\homeo$ is an extension of a minimal Cantor set (of a periodic orbit) if there exists a map $h:S\rightarrow S$ homotopic to the identity such that:

\begin{itemize}

\item[(i)] $h\circ f=\hat{f}\circ h$ for some $\hat{f}\in\homeo$;

\item[(ii)] $\mathcal{M}'=h(\mathcal{M})$ is a minimal cantor set (is a periodic orbit) of $\hat{f}$, and $h^{-1}(y)$ contains a unique connected component of $\mathcal{M}$ for every $y\in\mathcal{M}'$.

\end{itemize}

For a minimal set $\mathcal{M}\subset S$ of $f\in\homeo$, we say that an element $U\in\Pi_{0}(\mathcal{M}^c)$ is periodic if for some $n\in\N$ we have $f^n(U)=U$. Otherwise we say that $U$ is wandering. We say that $U\in\Pi_{0}(\mathcal{M}^c)$ is {\it bounded} if the connected components of $p^{-1}(U)$ are relatively compact.\\

The classification theorem is the following:

\begin{teo1}

Let $S$ be a closed hyperbolic surface and $\mathcal{M}\subset S$ minimal set of $f\in\homeo$. Then either:

\begin{itemize}

\item[(1)] the elements in $\Pi_0(\mathcal{M}^c)$ are all disks, with at least one unbounded. Moreover, any bounded disk in $\Pi_0(\mathcal{M}^c)$ is wandering;

\item[(2)] $\mathcal{M}=\cont_0\cup...\cup\cont_{n-1}$, where $\{\cont_i\}_{i=0}^{n-1}$ is a family of pairwise disjoint continua each containing a connected component of $\partial V$ with $V\in \mathcal{E}(\mathcal{M}^c)$, and $f(\cont_i)=\cont_{i+1}$ $\textrm{mod}(n)$ for all $i\in \{0, \ldots, n-1\}$.

\item[(3)] $\mathcal{M}$ is either an extension of a periodic orbit or an extension of a minimal cantor set.

\end{itemize}

\end{teo1}

We refer to a minimal set $\mathcal{M}$ as a type 1, 2 or 3 minimal set, whenever it is in the case 1, 2 or 3 of the classification theorem. Mental pictures to have in mind are:

\begin{itemize}
\item Type 3 minimal set: a fixed point;
\item  Type 2 minimal set: a non trivial simple loop ${\cal C}$ in $S$ which supports an irrational rotation (more interesting examples are described in Section \ref{examples});
\item Type 1 minimal set : consider a full geodesic lamination $\Lambda$ in $S$ of genus $g$ such that $S\backslash \Lambda$ is the union of two disks that lift to the universal covering space to ideal $2g$- gones with alternating orientations on its sides (see Figure \ref{laminacion}). This condition allows to complete the lamination to a flow in $S$ with a fixed point of index $1-g$ in each disk.  Moreover, the lamination is a minimal set for the flow, and one can show that there exists $t\in \R$ such that $\Lambda$ is minimal for the time $t$ of this flow (\cite{timet}).
\end{itemize}

\begin{figure}[ht]

\psfrag{U}{$U$}

\centerline{\includegraphics[scale=0.45]{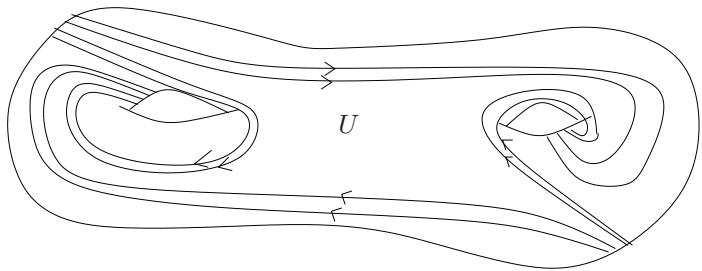}}

\caption{Type 1 minimal set}\label{laminacion}
\end{figure}

\newpage

A continuum $\cont\subset S$ is an essential \textit{circloid} if it verifies:

\begin{itemize}

\item[(i)] there exists some essential annuli $U$ such that $\cont\subset U$, and $U\setminus \cont$ has two unbounded components (components with non trivial fundamental group);

\item[(ii)] The continuum $\cont$ is minimal with respect the condition (i), that is, it does not strictly contain any other continuum with the property (i).

\end{itemize}

If $S$ is any closed surface (not necessarily hyperbolic), we say that a minimal set is of type 1 if $\Pi_0(\mathcal{M}^c)$ is a union of disks. This is consistent with the classification given in \cite{Kwajapa} when $S$ is the torus, and is the situation considered in \cite{MaNa} when $S$ is the sphere.

Given a continuum and type 2 minimal set $\cont$ of $f\in\homeo$, we say that it is reducible to a type 1 minimal set if there exists a closed surface $S'$ such that $g(S')<g(S)$, a homeomorphism $\hat f: S'\to S'$ and ${\cal C}_k'$ a type 1 minimal set for $\hat f$ such that $f|_{{\cal C}_k}$ is conjugated to $\hat f|_{{\cal C}_k'}$.

The second result which completes the classification is the following:

\begin{teo2}

Let $S$ be a closed hyperbolic surface and $\mathcal{M}=\cont_0\cup...\cup\cont_{n-1}$ a type 2 minimal set of $f\in\homeo$. Then, there exists $N>0$ such that $\cont_k$ is reducible to a type 1 minimal set for $f^N$. Furthermore,

\begin{itemize}

\item[(2i)] if $g(S') = 0$, then $\cont_k$ is the boundary of a circloid for every $k=0,...,n-1$ and $\mathcal{E}(\cont_k^c)\leq 2$. Further, every disk in $\Pi_0(\mathcal{M}^c)$ is wandering.

\item[(2ii)] if $g(S') > 0$ every bounded disk in $\mathcal{M}^c$ is wandering, and there are at least $\#\{{\cal E}({\cal C}_k^c)\}$  unbounded fixed disks in $\Pi_0 (S'\backslash{\cal C}_k')$.

\end{itemize}

\end{teo2}

Basically, this last result says that there are two ways for constructing type 2 minimal sets: one is to choose a finite family of pairwise disjoint compact essential annuli in $S$ where the minimal set will be supported; and the other one is to consider a type 1 minimal set $\cont'$ in some surface $S'$ and then extending the dynamics to a connected sum of $S'$ with some others surfaces. See the figures below, and the section of examples  for a detailed description.

\begin{figure}[h]\label{tipo2}

\centerline{\subfigure[(2i)]{\includegraphics[scale=0.40]{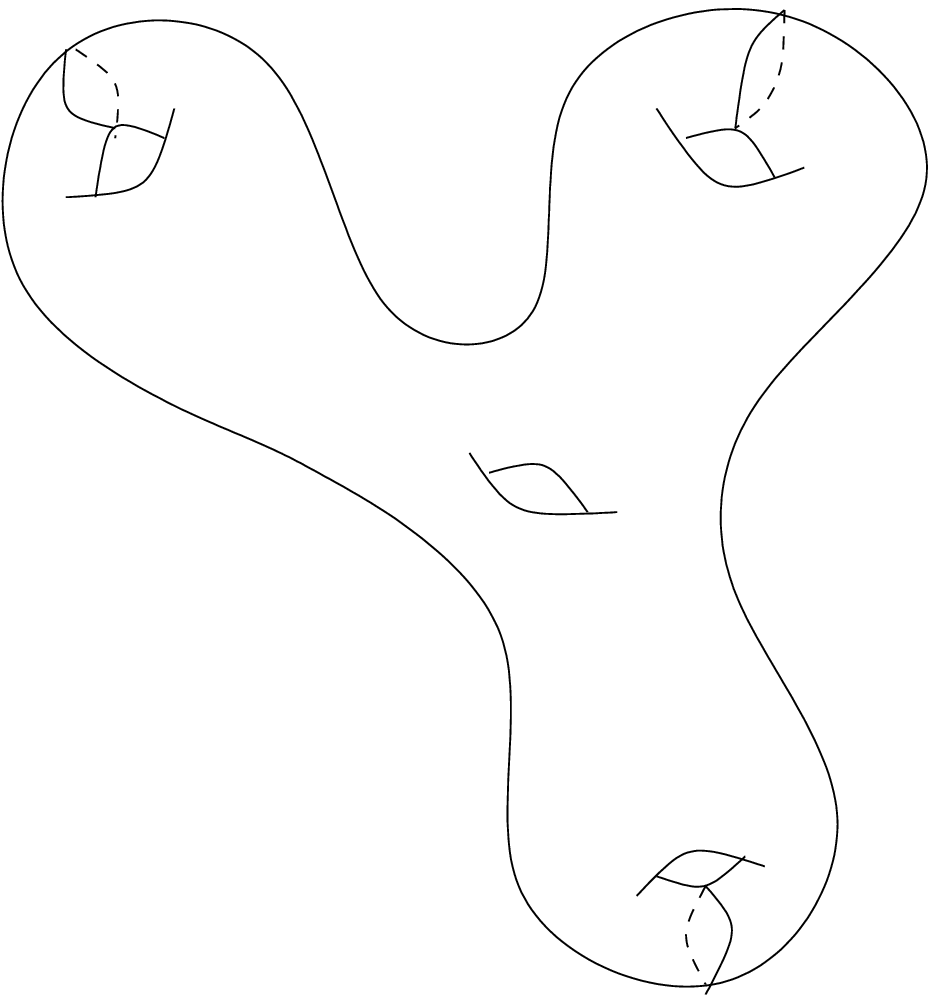}}\hspace{.45in}
\subfigure[(2ii)]{\includegraphics[scale=0.30]{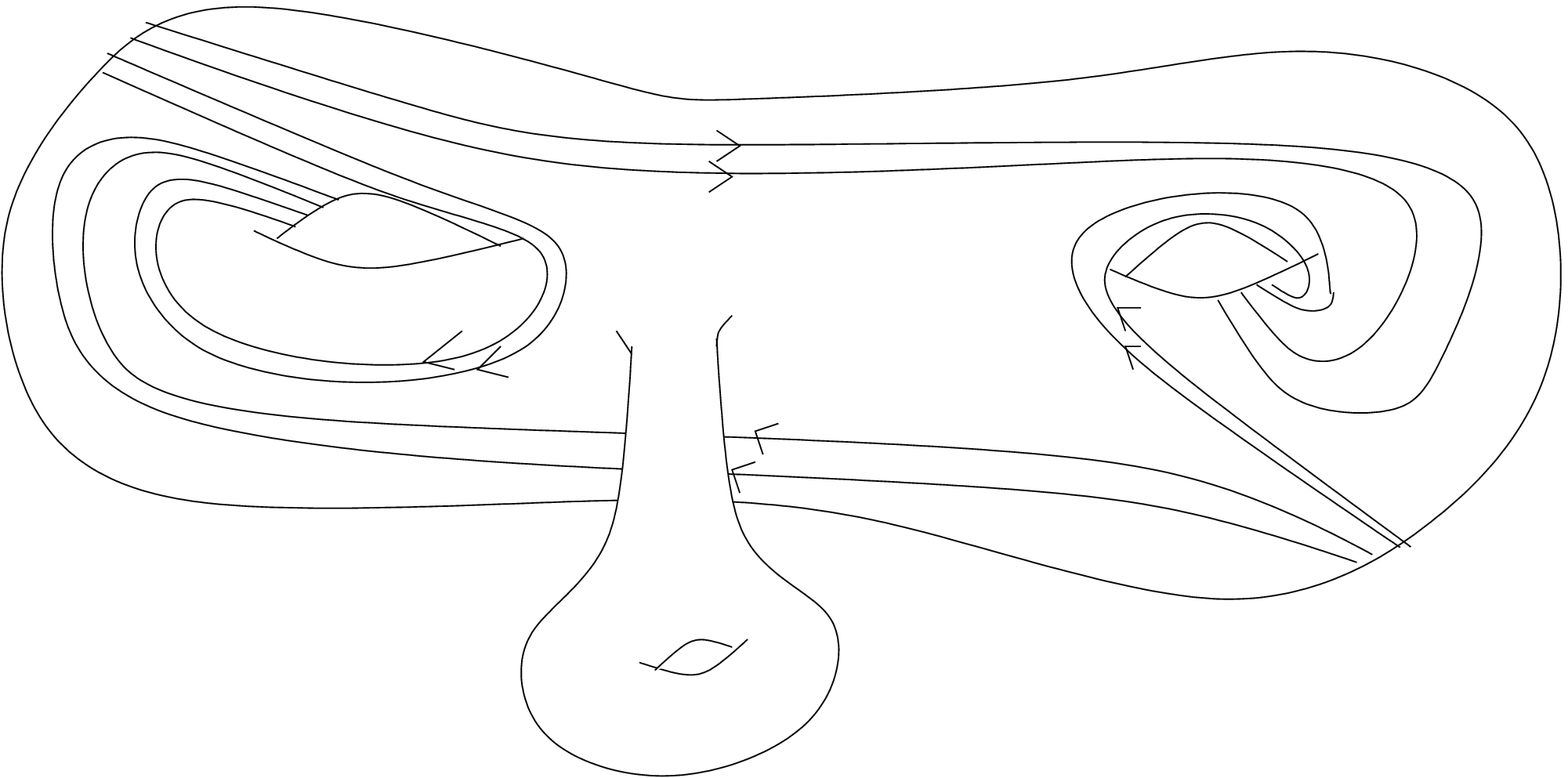}}}

\caption{Type 2 minimal sets }
\end{figure}

 \newpage More information about the conjugacy map between $f^N|_{{\cal C}_k}$ and $\hat f|_{{\cal C}_k'}$ can be obtained along the section \ref{type2}. In particular, information about the homotopy type of this map. We omit this here in sake of clarity.

The proofs of Theorem A and Theorem B are given in section \ref{proofAB}. After proving these theorems, we consider the non-wandering case. Recall that $f\in\homeo$ is non-wandering if for any open set $U\subset S$ there exists $n\in\N$ such that $f^n(U)\cap U\neq \emptyset$. In Section \ref{nonwand}, based in a result by A. Koropeki (\cite{koro}), we prove the following:

\begin{teo0}

Let $S$ be a closed hyperbolic surface, and $\mathcal{M}\subset S$ a minimal set for $f\in\textrm{Homeo}_{nw}(S)$. Then either:

\begin{itemize}

\item[$(1^{nw})$] $\mathcal{M}=\cont_0\cup...\cup\cont_{n-1}$ where $\{\cont_i\}_{i=0}^{n-1}$ is a family of pairwise disjoint circloids each containing a connected component of $\partial V$ with $V\in \mathcal{E}(\mathcal{M}^c)$, and $f(\cont_i)=\cont_{i+1}$ $\textrm{mod}(n)$ for $i=0...n-1$. Further there are no disks in $\Pi_0(\mathcal{M}^c)$;

\item[$(2^{nw})$] $\mathcal{M}$ is an extension of either a periodic orbit or a minimal cantor set.

\end{itemize}

\end{teo0}

Note the slight difference between the cases $(1^{nw})$ of last theorem and $(2i)$ of Theorem B. In the non-wandering case we have that the continua are circloids, while in the general case we only know that they are the boundary of a circloid. Finally, we devote Section \ref{nt} to the study of the classification depending on the isotopy class of $f$. In light of Nielsen-Thurston theory, we define the following set:

$$\mathcal{D}=\{f\in\homeo|\ f\sim g\ :\ g \mbox{ is either pA or reducible with pA components}\},$$

where ``$\sim$'' is the homotopy relation, and ``pA'' refers to pseudo-Anosov homeomorphisms. We obtain the following:

\begin{teo0}

Let $S$ be a closed hyperbolic surface. Then type one minimal sets can only occur for homeomorphisms in $\mathcal{D}^c$.

\end{teo0}

In other words, type 1 minimal sets can only occur for homeomorphisms which are either periodic or reducible without pseudo-Anosov components. We only know examples of type 1 minimal sets for periodic homeomorphisms (indeed only for homotopic to the identity maps), and we think that only for this class these minimal sets can exists. So, we pose the following question:

\textit{Do type 1 minimal sets exist for homeomorphisms in the homotopy class of reducible maps without pA components?
}

As a direct consequence of the last theorem we prove the following:

\begin{cor0}

Let $S$ be a closed hyperbolic surface and $f\in\homeo$ homotopic to a pseudo-Anosov homeomorphism. Then every minimal set of $\mathcal{M}$ is type 3.

\end{cor0}

\end{section}

\begin{section}{A preliminary classification.}\label{homotopyofcomp}

Let $S$ be a compact hyperbolic surface, $\tilde S$ its universal covering space, and $p:\tilde S \to S$ the covering projection. We identify  $\tilde S$ with the Poincar\`e disk $\D$, and $S^1=\partial \D$ with $\partial \tilde S$.

Let $U\subset S$ be open and connected, $i:U\to S$ the inclusion map and $i_{*}:\pi_1(U,x_0)\rightarrow \pi_1(S,x_0)$ the induced homomorphism of fundamental groups. We say that $U$ is {\it trivial} if $\textrm{Im}(i_{*})=0$;
we say that $U$ is ${\it essential}$ if $$0\neq \textrm{Im}(i_{*})\neq \pi_1(S,x_0); $$\noindent finally, we say that $U$ is {\it fully essential} if $i_{*}$ is surjective.

If $U\subset S$ is connected, we say that $U$ is {\it bounded} if the connected components of $p^{-1}(U)$ are relatively compact.

Let $f:S\to S$ be a homeomorphism and ${\cal M}$ a minimal set for $f$.  We note $\Pi_0 ({\cal M}^c)$ for the set
of connected components of $S\backslash{\cal M}$.

In this section we make the following preliminary classification of minimal sets:

\begin{teo}\label{prel}  One and only one of the following holds:

\begin{enumerate}

\item Every $U\in\Pi_0(\mathcal{M}^c)$ is a disk;

\item There exists an essential component $U\in\Pi_0(\mathcal{M}^c)$, and
\\$i_{*}:\pi_1(V,x_0)\rightarrow \pi_1(S,x_0)$ is injective for all $V\in\Pi_0(\mathcal{M}^c)$;

\item There exists a fully essential component $V\in\Pi_0(\mathcal{M}^c)$ and every other $U\in\Pi_0(\mathcal{M}^c)$ is a bounded disk.

\end{enumerate}

\end{teo}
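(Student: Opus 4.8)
The plan is to first dispose of the easy parts and then extract the trichotomy from the structure of complementary components with non‑injective $i_*$. If $\mathcal{M}=S$ then $\Pi_0(\mathcal{M}^c)=\emptyset$ and (1) holds vacuously, so assume $\mathcal{M}\subsetneq S$; as $S$ is connected and $\mathcal{M}$ is minimal, $\partial\mathcal{M}$ is a closed $f$‑invariant subset of $\mathcal{M}$, hence empty or all of $\mathcal{M}$, and since $\mathcal{M}\ne S$ this forces $\mathcal{M}$ to have empty interior, so $\mathcal{M}^c$ is dense. Note that $f$ permutes $\Pi_0(\mathcal{M}^c)$ and preserves, along each orbit, whether a component is a disk, trivial, essential, fully essential or bounded, and that for every component $U$ the nonempty closed set $\partial U\subset\mathcal{M}$ satisfies $\overline{\bigcup_n f^n(\partial U)}=\mathcal{M}$ by minimality. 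The three cases exclude one another because $\pi_1$ of a proper open subsurface of $S$ is free while $\pi_1(S)$ is not ($S$ being closed hyperbolic): no component can be fully essential with $i_*$ injective; since a disk is trivial, (1) is incompatible with the essential component of (2) and with the fully essential one of (3), and the fully essential component of (3) has non‑injective $i_*$, which rules out (2).

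For existence it suffices to establish four facts: (i) if a component $U$ has $i_*\colon\pi_1(U)\to\pi_1(S)$ non‑injective, then $U$ is fully essential; (ii) at most one component is fully essential; (iii) no essential component coexists with a fully essential one; (iv) in the presence of a fully essential component, every other (necessarily disk) component is bounded. Indeed, (i) gives in particular that $\mathcal{M}^c$ has no trivial non‑disk component (such a $U$ has $\ker i_*=\pi_1(U)\ne 0$, hence would be fully essential, contradicting triviality), and, contrapositively, that if no component is fully essential then every $i_*$ is injective. Thus, if some component is fully essential, it is the unique one by (ii), and by (iii) together with the previous remark every other component is neither essential, nor a trivial non‑disk, nor (being non‑injective, hence) fully essential, so it is a disk — and bounded by (iv) — giving (3); if no component is fully essential, every $i_*$ is injective, so a component is trivial precisely when it is a disk, whence either every component is a disk, giving (1), or a non‑disk component has nonzero, proper, injective image, i.e. is essential, giving (2). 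With mutual exclusivity this is the theorem. Fact (ii) is the clean one: two disjoint fully essential components would both surject onto $H_1(S)$, but classes carried by disjoint subsurfaces are orthogonal for the nondegenerate intersection pairing, so one of the two images would be $0$, which is absurd.

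The geometric content lies in (i) and (iii), which I would attack by the same mechanism. If $U$ has non‑injective $i_*$, a lift $\tilde U_0\subset\D=\tilde S$ of $U$ has $\pi_1(\tilde U_0)\cong\ker i_*\ne 0$, hence a relatively compact complementary ``hole'' $K_0$ whose frontier lies in $p^{-1}(\mathcal{M})$: in the universal cover, $U$ homotopically encircles the nonempty piece $p(\partial K_0)$ of $\mathcal{M}$. When $\mathcal{M}$ is connected this is quickly decisive: an essential‑in‑$U$, null‑homotopic‑in‑$S$ simple closed curve of $U$ bounds an embedded disk $D$ in $S$, and since $\mathcal{M}\cap\partial D=\emptyset$ and $\mathcal{M}$ is connected, $\mathcal{M}\subset D$; then $S\setminus\overline D$, which is connected and fully essential, lies in $U$, so $U$ is fully essential. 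In general one must run this over the $G$‑orbit of $K_0$ and the $f$‑orbit of $U$, using $\overline{\bigcup_n f^n(\partial U)}=\mathcal{M}$ and the Fill operation (filling the bounded complementary components of $U$) to conclude that $U$ encircles \emph{all} of $\mathcal{M}$, which forces $U$ to be fully essential. For (iii), if $V$ is fully essential and $U\ne V$ were essential, then $U$ has injective $i_*$ (else it is fully essential $=V$ by (i),(ii)) and trivial $H_1$‑image (orthogonal to $V$'s), so an essential simple closed curve of $U$ is separating in $S$; the same encircling/minimality argument cuts off a subsurface lying in $U$ while pushing $V$ to the complementary piece, and full essentiality of $V$ then forces that piece's $\pi_1$ to be all of $\pi_1(S)$, whence in the amalgam $\pi_1(S)=\pi_1(S_1)*_{\Z}\pi_1(S_2)$ one of the $\pi_1(S_i)$ would be $\Z$ — impossible for a compact orientable surface with a single boundary circle.

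I expect the main obstacle to be precisely the passage from the connected case to the general one in (i), (iii) (and the boundedness (iv)): when $\mathcal{M}$ is a Cantor‑type set, a separating curve contained in $\mathcal{M}^c$ need \emph{not} have $\mathcal{M}$ entirely on one side, since the two clopen pieces into which it cuts $\mathcal{M}$ are not $f$‑invariant. Overcoming this means combining the covering‑space picture (the holes of the lifts, the connectedness of $p^{-1}(V)$ for a fully essential $V$), the Fill operation, and the density of the orbit of $\partial U$ — that is, controlling how the $G$‑orbits and $f$‑orbits of the filled complementary pieces are arranged in $S$ and how they meet $\mathcal{M}$. By contrast the covering‑space bookkeeping itself (freeness of $\pi_1(U)$, components of $p^{-1}(U)$ versus cosets, $\pi_1(\tilde U_0)\cong\ker i_*$, cocompactness of $G$) and the algebra (the intersection form and the amalgam argument) are routine.
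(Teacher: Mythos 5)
Your overall decomposition is workable, and two ingredients are genuinely fine: the mutual exclusivity via ``open subsurfaces have free $\pi_1$, $\pi_1(S)$ is not free'' is correct and a bit slicker than the paper's, and fact~(ii) (at most one fully essential component, via orthogonality of classes carried by disjoint subsurfaces under the nondegenerate intersection pairing) is a clean alternative to what the paper does. But the proof has a real gap that you explicitly acknowledge and do not close: fact~(i), the claim that any $U\in\Pi_0(\mathcal{M}^c)$ with non-injective $i_*$ is itself fully essential, is only established when $\mathcal{M}$ is connected. Your ``connected'' argument (a null-homotopic essential curve $\gamma\subset U$ bounds a disk $D$, $\mathcal{M}$ lies on one side, hence $S\setminus\overline D\subset U$) is correct as far as it goes, but when $\mathcal{M}$ is disconnected---the generic situation---$\mathcal{M}$ is typically scattered on both sides of $\gamma$, and ``running over the $G$- and $f$-orbits of the hole and using $\mathrm{Fill}$'' is exactly the hard step; you name the ingredients but do not carry out the combination. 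Since everything else in your deduction (the absence of trivial non-disk components, injectivity of every $i_*$ when there is no fully essential component, and (iii)) is filtered through (i), the argument is incomplete.

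Two further remarks on the route itself. First, the paper does not prove your strong (i); it proves a weaker statement which turns out to suffice and is substantially easier. From $\ker i_*\ne 0$ one gets a null-homotopic simple closed curve $\gamma\subset U$ bounding a disk $D$ with $D\cap\mathcal{M}\ne\emptyset$; then minimality lets one package \emph{all} of $\mathcal{M}$ into finitely many pairwise disjoint closed disks (pulling $D$ back by iterates and carefully merging), and a covering-space lemma shows the complement of a finite union of disjoint closed disks is fully essential. So what the paper obtains is ``some non-injective $U$ exists $\Rightarrow$ some component is fully essential'', and it never needs to identify that component with $U$. That side-step is what makes the disconnected case tractable; your plan of showing that $U$ itself ``encircles all of $\mathcal{M}$'' is asking for more than is needed. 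Second, your facts (iii) and (iv) do not require the intersection-form/amalgam detour at all: disjointness from a fully essential component already forces relative compactness of every lift of the other component (the paper's Lemma~\ref{ltop}, a hyperbolic-geometry covering argument), and a bounded component of the complement of a minimal set must be a disk (Lemma~\ref{bounded}, a dynamical argument); together these give (iii) and (iv) at once. Your amalgam argument for (iii) can likely be repaired, but you give no argument for the boundedness in (iv), and neither is needed once one has the covering-space lemma. In short: the correct skeleton is there, (ii) is complete, but (i) is unproved in the general case and (iii)/(iv) are both incomplete and unnecessarily indirect compared to the covering-space route the paper takes.
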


\noindent Figure \ref{laminacion} illustrates the first item, and Figure \ref{tipo2} the second one.\\

We begin with merely topological facts.

\begin{lema}\label{ltop}  If $V$ is a fully essential open and connected subset of $S$, $U\subset S$ is a connected set, and $V\cap U = \emptyset$ then $U$ is bounded.
\end{lema}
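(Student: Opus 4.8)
The plan is to reduce to a purely topological statement about a \emph{compact} subsurface of $V$, and then lift a simply connected complementary region to $\D$, where it is automatically relatively compact. First I would build such a subsurface: since $\pi_1(S)$ is finitely generated and $i_\ast\colon\pi_1(V,x_0)\to\pi_1(S,x_0)$ is onto, pick finitely many loops in $V$ based at $x_0$ whose classes generate $\pi_1(S,x_0)$, and let $V_1\subset V$ be a compact connected subsurface of $S$ containing them, so that $\pi_1(V_1)\to\pi_1(S)$ is still surjective. Since $U\cap V=\emptyset$ and $V_1\subset V$, the connected set $U$ lies in a single connected component $W$ of $S\setminus\inte(V_1)$; it therefore suffices to show that $W$ is a disk.

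The topological heart is the claim that \emph{every component $\overline{W_j}$ of $S\setminus\inte(V_1)$ is a closed disk}. A compact connected orientable surface with boundary that is not a disk has every boundary circle essential in it; the circles of $\partial V_1$ are essential in $V_1$ (as $V_1$, carrying $\pi_1(S)\neq 1$, is not a disk) and, whenever the adjacent complementary region is not a disk, essential in $S$ as well. Hence, if some $\overline{W_j}$ were not a disk, van Kampen would present $\pi_1(S)$ as the fundamental group of a graph of groups with injective (at most cyclic) edge groups and vertex groups $\pi_1(V_1)$ and the $\pi_1(\overline{W_j})$. If the underlying graph had a cycle, collapsing the vertex groups would give a surjection of $\pi_1(S)$ onto a nontrivial free group that kills the image of $\pi_1(V_1)$, contradicting surjectivity; so the graph is a tree. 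Since two edges between $V_1$ and the same $\overline{W_j}$ would form a cycle, each $\overline{W_j}$ then has a single boundary circle, and surjectivity of the single vertex group $\pi_1(V_1)$ in the resulting iterated amalgamated product forces $\pi_1(\overline{W_j})$ to be cyclic — but a compact orientable surface with one boundary circle and cyclic fundamental group is a disk. Thus each $\overline{W_j}$, and in particular $\overline{W}$, is a closed disk.

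Finally, $\overline{W}$ is compact and simply connected, so $p^{-1}(\overline W)$ is a disjoint union of sets each mapped homeomorphically, hence compactly, onto $\overline W$. Any connected component $\tilde U$ of $p^{-1}(U)$ lies in $p^{-1}(W)\subset p^{-1}(\overline W)$ and, being connected, inside one of these compact sets; hence $\overline{\tilde U}$ is compact. Therefore every component of $p^{-1}(U)$ is relatively compact, i.e.\ $U$ is bounded. I expect the only real work to be in the middle paragraph: setting up the graph of groups correctly and tracking the (routine but fiddly) consequences of surjectivity, in particular ruling out annular or higher-genus complementary regions and keeping straight which boundary circles are essential in $S$. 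The reduction step and the lifting argument are straightforward.
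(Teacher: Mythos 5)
Your argument is correct, but it follows a genuinely different route from the paper's. The paper works directly in $\D$: assuming $U$ unbounded, it fixes $x\in S^1\cap\overline{\tilde U}$ for a lift $\tilde U$, then uses the tiling of $\D$ by fundamental polygons together with the full essentiality of $V$ to produce a lift $\tilde\beta$ of a loop $\beta\subset V$ whose closure in $\overline{\D}$ separates $x$ from a point of $\tilde U$; since $\tilde U$ is connected and accumulates at $x$, it must meet $\tilde\beta\subset p^{-1}(V)$, contradicting $V\cap U=\emptyset$. You instead extract a compact $\pi_1$-surjective subsurface $V_1\subset V$, show every component of $S\setminus\inte (V_1)$ is a disk via a graph-of-groups argument, and lift disks as compact sets. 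Both are sound: the paper's route is shorter and stays within hyperbolic geometry and covering-space basics, while yours isolates a reusable structural fact (a compact subsurface carrying all of $\pi_1(S)$ has only disk complementary regions) at the cost of invoking Bass--Serre theory. Two small repairs to your middle paragraph would tighten it: when some $\overline{W_j}$ is already a disk, the edge group $\Z$ does \emph{not} inject into $\pi_1(\overline{W_j})=1$, so it is cleaner to first cap off all disk components (this changes neither $\pi_1(V_1)$ nor its surjectivity) and only then run the cycle and single-boundary-circle arguments with honestly injective $\Z$ edge groups; and the amalgamation step actually forces $\pi_1(\overline{W_j})$ to \emph{equal} the image $\Z$ of the edge group, which is impossible for a compact orientable surface with exactly one boundary circle and $\chi<0$ because its fundamental group is free of even rank $2g_j$ --- the conclusion is the same, but ``cyclic $\Rightarrow$ disk'' as phrased is not quite the implication being used.
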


\begin{proof} If $U$ is not bounded, and $\tilde U$ is a connected component of $p^{-1}(U)$, there exists $x\in S^1\cap \overline{\tilde U}$.  Take $x_0\in \tilde U$ and let $\alpha$ be the geodesic joining $x_0$ and $x$, where $x_0$ is considered in the interior of a fundamental geodesic polygon $P$. As $\alpha$ is unbounded,  there exists a geodesic $\gamma$ containing one of the sides of $P$ and an infinite sequence $(g_n)_{n\in N}$ of covering transformations such that $\alpha \cap g_n(\tilde \gamma)\neq \emptyset$. Let $W_n$ be the connected component of $\D\backslash g_n(\gamma)$ containing $x$ in its closure.  Then,  $(W_n)_{n\in \N}$ is a basis of neighborhoods of $x$. Furthermore, $\overline{g_n(\gamma)}$ separates $x_0$ from $x$ for all $n\in \N$. As $V$ is fully essential, one can find $\beta\subset V$ homotopic to $\gamma$.  Furthermore, for any $n\in\N$, there exists a lift $\beta _n$ of $\beta$ asymptotic to $g_n(\gamma)$. So, there exists $\tilde \beta$ which lifts $\beta$ such that $\overline{\tilde \beta}$ separates $x$ from $x_0\in \overline \D$.  This implies that $\tilde \beta\cap\tilde U\neq \emptyset$ which contradicts the fact $V\cap U = \emptyset$.
\end{proof}

%

\begin{lema}\label{ltop2} If $X\subset S$ is contained in a finite union of pairwise disjoint closed
disks $D_1, \ldots, D_n$, then $i_* : \pi_1(S\backslash X, x_0) \to \pi_1(S, x_0)$ is surjective for any $x_0\in S\backslash \cup_{i=1}^n D_i$.
\end{lema}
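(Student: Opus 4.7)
The plan is to reduce the claim to the classical fact that removing a disjoint union of open disks from a closed surface yields a subspace whose inclusion induces a surjection on fundamental groups; the inclusion $S\setminus X\hookrightarrow S$ will factor through this standard surjection.

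The first step is to slightly enlarge the disks. Since the $D_i$ are pairwise disjoint, compact, and do not contain $x_0$, the collar-neighbourhood theorem for topological surfaces lets me choose pairwise disjoint closed disks $D_1',\ldots,D_n'$ with $D_i\subset\operatorname{int}(D_i')$ for each $i$ and with $x_0\notin\bigcup_i D_i'$. Setting $N:=S\setminus\bigcup_i\operatorname{int}(D_i')$, the hypothesis $X\subset\bigcup_i D_i$ upgrades to $X\subset\bigcup_i\operatorname{int}(D_i')$, so $N\subset S\setminus X$, and the inclusion $N\hookrightarrow S$ factors through $S\setminus X$.

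The second step is a Van Kampen computation: $S$ is obtained from the compact surface-with-boundary $N$ by regluing $n$ disks along the boundary circles. Attaching each $2$-cell only introduces a relation into the fundamental group (the cell is simply connected and the attaching circle is connected), so the inclusion $N\hookrightarrow S$ induces a surjection $\pi_1(N,x_0)\twoheadrightarrow\pi_1(S,x_0)$. Since this surjection factors as
\[\pi_1(N,x_0)\longrightarrow \pi_1(S\setminus X,x_0)\stackrel{i_*}{\longrightarrow}\pi_1(S,x_0),\]
the map $i_*$ must itself be surjective, which is the claim.

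No serious obstacle arises. The two ingredients—collar neighbourhoods of embedded disks in a surface, and Van Kampen for a finite disk attachment—are both standard. The only point requiring any care is choosing the enlargements $D_i'$ small enough that they remain pairwise disjoint and continue to exclude the basepoint, both of which follow from compactness of $\bigcup_i D_i$ and $\{x_0\}$.
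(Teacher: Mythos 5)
Your proof is correct, but it takes a genuinely different route from the paper's. The paper lifts to the universal cover: since the $D_i$ are pairwise disjoint and simply connected, $p^{-1}(\bigcup_i D_i)$ is a pairwise disjoint union of closed disks, hence $\D\setminus p^{-1}(\bigcup_i D_i)$ is arc-connected; if $i_*$ were not onto, some lift $\tilde x_0$ of $x_0$ could not be joined to a deck translate $T(\tilde x_0)$ inside $\D\setminus p^{-1}(X)$, contradicting arc-connectedness of the even smaller set $\D\setminus p^{-1}(\bigcup_i D_i)$. You instead stay downstairs in $S$: thicken the $D_i$ to $D_i'$ by collars of their (tame, by Schoenflies) boundary circles so that $X\subset\bigcup_i\inte(D_i')$, set $N=S\setminus\bigcup_i\inte(D_i')$, observe that $S$ is recovered from the compact connected surface-with-boundary $N$ by attaching $n$ disks, so $\pi_1(N,x_0)\twoheadrightarrow\pi_1(S,x_0)$ by Van Kampen, and factor this inclusion through $S\setminus X$. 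Both arguments are sound; the paper's covering-space argument avoids any thickening or collar considerations and sits naturally with the pervasive use of the Poincar\'e disk model throughout the paper, while your Van Kampen argument is self-contained within $S$, does not invoke the hyperbolic structure at all, and applies verbatim to any closed surface regardless of genus.
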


\begin{proof}Let $X \subset \cup_{i=1}^n D_i$, with the $D_i$'s closed and pairwise disjoint disks. Then,
$p ^{-1}(\cup_{i=1}^n D_i)$ is a union of pairwise disjoint closed disks containing $p^{-1}(X)$.
Indeed, any two lifts $\tilde D_i$ and $\tilde D_j$ of $D_i$ and $D_j$ with $i\neq j$ cannot intersect
as $D_i \cap D_j =\emptyset$ . Moreover, for any non trivial $T \in G$ and $\tilde D_i$ a lift of $D_i$,
$\tilde D_i\cap T(\tilde D_i)= \emptyset$ because $D_i$ is a disk and cannot contain a non trivial loop. In
particular, $\tilde S \backslash p^{-1}(\cup_{i=1}^n D_i)$ is arc connected. If $i_* : \pi_1(S\backslash X, x_0) \to \pi_1(S, x_0)$ is
not surjective, there exists $T\in G$ such that for any lift $\tilde x$ of  $x_0$, $\tilde x$ and $T(\tilde x)$ cannot
be joined by a path in $\tilde S\backslash p^{-1}(X)$. As $\tilde S \backslash p^{-1}(\cup_{i=1}^n D_i)$ is arc connected and both $\tilde x$ and $T(\tilde x)$ belong to $\tilde S \backslash p^{-1}(\cup_{i=1}^n D_i)$, this is a contradiction.

\end{proof}

We will show that if there is no fully essential component in $\Pi_0 ({\cal M}^c)$, then for any $U\in \Pi_0 ({\cal M}^c)$, the map $i_{*}:\pi_1(U,x_0)\rightarrow \pi_1(S,x_0)$ is injective.

\begin{lema} If there exists $U\in \Pi_0 ({\cal M}^c)$ such that the map
$i_{*}: \pi_1(U,x_0)\to \pi_1(S, x_0)$ is not injective, then there exists a finite number of pairwise disjoint closed disks in $S$, $D_1, \ldots, D_n$ such that
${\cal M}\subset \inte (\cup _{i=1}^n D_i)$.

\end{lema}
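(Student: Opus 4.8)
Suppose $U\in\Pi_0(\mathcal M^c)$ is such that $i_*:\pi_1(U,x_0)\to\pi_1(S,x_0)$ is not injective. The idea is to use the failure of injectivity to produce a simple closed curve in $U$ which is homotopically non-trivial in $U$ but null-homotopic in $S$; such a curve bounds a disk in $S$, and we want to conclude that $\mathcal M$ is trapped inside finitely many such disks. First I would pass to the universal cover: non-injectivity of $i_*$ means there is a non-trivial covering transformation $T\in G$ and a connected component $\tilde U$ of $p^{-1}(U)$ with $T(\tilde U)=\tilde U$; equivalently, some loop in $U$ that is essential in $U$ lifts to an arc in $\tilde U$ joining $\tilde x$ to $T(\tilde x)$. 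Since $U$ is an open subsurface, I would realize the relevant class by an embedded loop $\gamma\subset U$, essential in $U$ but bounding in $S$, and let $D\subset S$ be the open disk it bounds. The key topological input is that $\partial D=\gamma\subset U\subset\mathcal M^c$, hence $\mathcal M\cap\partial D=\emptyset$, so by connectedness of $\mathcal M$ either $\mathcal M\subset D$ or $\mathcal M\cap D=\emptyset$.

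In the first case we are essentially done: $\overline D$ is a single closed disk containing $\mathcal M$ in its interior (after a small enlargement of $\gamma$ inside $U$ to get $\mathcal M$ in the interior rather than just in $D$), which is the desired conclusion with $n=1$. The substantive case is $\mathcal M\cap D=\emptyset$, i.e. $D\subset\mathcal M^c$, so $D$ is contained in a single component of $\mathcal M^c$; since $\gamma\subset U$ and $\gamma=\partial D$, in fact $D\cup\gamma$ meets $U$, and I would argue that $D$ together with $U$ forms a connected subset of $\mathcal M^c$, hence $D\cup U$ lies in one component, forcing $D\cup\gamma\subset U$ (as $\gamma\subset U$ and $U$ is a component). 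Thus $U$ contains the disk it "cuts off", and replacing $U$ by $U\cup D$ (still in $\mathcal M^c$, still a component) I may assume $D\subset U$. Now cap off: the curve $\gamma$ being essential in $U$ but bounding the disk $D\subset U$ means $U$ contains an embedded essential-in-$U$ null-in-$S$ curve; this is precisely the situation where one can apply a "capping" or handle-reduction argument. Concretely, I would pass from $S$ to the closed surface $S'$ obtained by compressing along $\gamma$ — cut $S$ along $\gamma$ and glue in two disks — noting that $\mathcal M$ is unaffected since $\mathcal M$ lies in the complement of a neighborhood of $\gamma$, and $g(S')<g(S)$. In $S'$ the set $\mathcal M$ is still minimal for the induced map and still has a non-injective complementary component (or the genus has strictly dropped, which cannot continue forever).

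The cleanest way to organize the induction is on $g(S)$: if $g(S)=0$, $S$ is the sphere, and any essential-in-$U$ curve bounds a disk on each side; choosing the side not containing $\mathcal M$ and using that $\mathcal M\neq S$ one sees $\mathcal M$ lies in a disk directly, giving $n=1$. For $g(S)\ge1$, the compression described above produces $S'$ with $g(S')<g(S)$ and a minimal set $\mathcal M$ (identified with a subset of $S'$ disjoint from the two new disks) for a homeomorphism $f'$ of $S'$; if the image of $U$ in $S'$ still has non-injective $i_*$ we recurse, and otherwise — using Lemma~\ref{ltop2} in the contrapositive together with the earlier lemmas — the absence of a fully essential component plus the genus drop lets us locate $\mathcal M$ in finitely many disks of $S'$, which pull back to finitely many disks in $S$ (a neighborhood of $\gamma$ together with these disks still being a disjoint union of disks once we absorb $\gamma$ into $U$). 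The main obstacle I anticipate is the bookkeeping of the surgery: making precise that cutting along $\gamma$ and capping does not merge the complementary components of $\mathcal M$ in an uncontrolled way, that the new disks in $S'$ are genuinely disjoint from $\mathcal M$, and that a finite family of disks upstairs is recovered from a finite family downstairs — in particular checking that the preimage under the compression of a disk in $S'$ meeting the capped region is again a disk (or a disjoint union of disks) in $S$. A secondary subtlety is realizing the non-injective class by an \emph{embedded} curve; this uses that $U$ is an open surface and standard surface topology (the kernel of $\pi_1(U)\to\pi_1(S)$ is normally generated by simple closed curves bounding in $S$), which I would invoke rather than prove.
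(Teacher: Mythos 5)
Your proof has a genuine gap at its very first step. After finding an embedded $\gamma\subset U$ that is essential in $U$ but null-homotopic in $S$ and bounds a disk $D$, you write: ``hence $\mathcal M\cap\partial D=\emptyset$, so by connectedness of $\mathcal M$ either $\mathcal M\subset D$ or $\mathcal M\cap D=\emptyset$.'' But $\mathcal M$ is merely a \emph{minimal} set; there is no reason for it to be connected (it could be a Cantor set, a periodic orbit, or a finite union of continua). The correct conclusion from $\mathcal M\cap\partial D=\emptyset$ is only that each \emph{component} of $\mathcal M$ is entirely inside or entirely outside $D$ --- there is a genuine mixed case where $\mathcal M\cap D\neq\emptyset$ but $\mathcal M\not\subset D$, and that is exactly the case the paper has to deal with. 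Worse, your ``second case'' ($\mathcal M\cap D=\emptyset$) cannot occur at all: then $D\subset\mathcal M^c$, and since a collar of $\gamma=\partial D$ lies in $U$, $D$ meets $U$, so $D\subset U$ and $\gamma$ becomes null-homotopic in $U$, contradicting essentiality. Hence the whole compression/genus-induction half of your plan is built on a vacuous hypothesis and never gets off the ground, while the actual content of the lemma (the mixed case) is never addressed.

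What the paper does in the mixed case is \emph{dynamical}, not surgical: it uses minimality. Since $D\cap\mathcal M\neq\emptyset$ and $\partial D\subset\mathcal M^c$, every component $\mathcal C$ of $\mathcal M$ has $f^n(\mathcal C)\subset D$ for some $n\geq 0$; let $k_{\mathcal C}$ be the least such $n$. One then produces, for each $\mathcal C$, an open disk $D_{\mathcal C}\supset\mathcal C$ with $f^{k_{\mathcal C}}(D_{\mathcal C})\subset D$ and with $k_{\mathcal C}$ still the first entry time of $D_{\mathcal C}$ into $D$; compactness of $\mathcal M$ gives a finite subcover by such disks. Disks with different entry times are automatically disjoint, and disks with the same entry time can be merged into a single one pulled back from a common sub-disk of $D$. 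Finally one shrinks to get pairwise disjoint \emph{closed} disks whose interiors still cover $\mathcal M$. This use of minimality to iterate the pieces of $\mathcal M$ into a single disk is the missing key idea in your proposal; without it, dropping the genus (even if the setup made sense) would not tell you how to produce a finite disjoint family of disks trapping a possibly disconnected $\mathcal M$.
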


\begin{proof} Take $U\in \Pi_0 ({\cal M}^c)$ such that the map
$i_{*}: \pi_1(U, x_0)\to \pi_1(S, x_0)$ is not injective. Then, there exists a simple closed curve $\gamma \subset U$ such that $\gamma$ bounds an open disk
$D\subset S$, but $D\cap {\cal M}\neq \emptyset$.

By minimality, for any ${\cal C}\in \Pi_0 ({\cal M})$ there exists $n\geq 0$ such that $f^n({\cal C})\subset D$. Indeed, $f^n({\cal C})\cap D\neq \emptyset$ for some
$n\geq 0$, and $f^n({\cal C})\cap \partial D = \emptyset$ as $\partial D = \gamma \subset U \subset {\cal M}^c$, and $f^n({\cal C})$ is connected. For ${\cal C}\in \Pi_0 ({\cal M})$, let
$k_{\cal C} = \min \{n\geq 0: f^n(\cal C)\subset D\}$.  Then, for all ${\cal C}\in \Pi_0 ({\cal M})$  there exists an open disk $D_{\cal C}$ such that
$k_{\cal C} = \min \{n\geq 0: f^n(D_{\cal C})\cap D \neq \emptyset\}$, and $f^{k_{\cal C}}(D_{\cal C})\subset D$.
In particular, for any
${\cal C}'\subset D_{\cal C}$ $k_{\cal C '} = k_{\cal C } $.  Evidently, ${\cal M}\subset \cup_{{\cal C} \in  \Pi_0 ({\cal M})} D_{\cal C}$ and we can take a finite
subcover ${\cal M}\subset \cup_{i=1}^n D_{{\cal C} _i}$.  For all $i=1, \ldots, n$, let  $k_i = k_{{\cal C} _i}$ and $D_i = D_{{\cal C}_i}$.  It is clear that
$k_i\neq k_j$ implies $D_i\cap D_j = \emptyset$.  Besides, we can suppose that if $k_i = k_j$, then $D_i=D_j$. Indeed, if
$k_i = k_j=k$, one can find a disk $D'\subset D$ such that $f^k(D_i\cup D_j)\subset D'$.  Then, $D_k = f^{-k}(D')$ is a disk
containing $D_i\cup D_j$.  Replacing $D_k $ by a smaller disk if necessary, one has that $k=\min \{n\geq 0 : f^n(D_k)\cap D\neq\emptyset\}$.  So, if  $k_i = k_j$, and $D_i\neq D_j$ , we replace $D_i$ and $D_j$ by $D_k$.

As ${\cal M}$ is closed, we can suppose by taking
smaller disks if necessary that the sets $\overline D_i$, $i=1, \ldots, n$ are pairwise disjoint, and we are done.

\end{proof}

\begin{cor}\label{iny} Suppose that there exists $U\in \Pi_0 ({\cal M}^c)$ such that the map
$i_{*}: \pi_1(U,x_0)\to \pi_1(S, x_0)$ is not injective. Then there exists a fully essential component $V\in \Pi_0({\cal M}^c)$.

\end{cor}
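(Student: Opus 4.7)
The plan is to derive this corollary by directly combining the preceding lemma with Lemma \ref{ltop2}, so the proof should be quite short.

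First I would invoke the preceding lemma under the hypothesis that some $U\in\Pi_0({\cal M}^c)$ has non-injective $i_*$, which supplies a finite family of pairwise disjoint closed disks $D_1,\ldots,D_n$ in $S$ such that ${\cal M}\subset \inte(\cup_{i=1}^n D_i)$. I would then observe that $S\setminus \cup_{i=1}^n D_i$ is non-empty: indeed, a closed hyperbolic surface cannot equal a finite disjoint union of closed disks, since $S$ is connected and not a disk, so the disks cannot exhaust $S$. Any point $x_0$ in this complement automatically lies in ${\cal M}^c$.

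Next, I would apply Lemma \ref{ltop2} with $X={\cal M}$ and this choice of basepoint $x_0\in S\setminus \cup_{i=1}^n D_i$. The hypotheses are met because ${\cal M}\subset \inte(\cup_{i=1}^n D_i)\subset \cup_{i=1}^n D_i$ and the disks are pairwise disjoint. The conclusion is that the induced map $i_*:\pi_1(S\setminus{\cal M},x_0)\to \pi_1(S,x_0)$ is surjective.

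Finally I would note that $\pi_1(S\setminus{\cal M},x_0)$ is by definition the fundamental group of the connected component $V\in\Pi_0({\cal M}^c)$ containing $x_0$, so surjectivity of $i_*$ is exactly the statement that $V$ is fully essential, completing the proof. There is no genuine obstacle here; the only point requiring a line of comment is the non-emptiness of $S\setminus \cup_{i=1}^n D_i$, which follows from the topology of $S$.
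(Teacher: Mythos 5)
Your proposal is correct and takes essentially the same approach as the paper: both invoke the preceding lemma to obtain the disks $D_1,\ldots,D_n$ covering ${\cal M}$ and then apply Lemma \ref{ltop2} to get surjectivity of the induced map on fundamental groups. The only cosmetic difference is that you apply Lemma \ref{ltop2} with $X={\cal M}$ directly (identifying $\pi_1(S\setminus{\cal M},x_0)$ with the fundamental group of the component containing $x_0$), whereas the paper first applies it with $X=\cup_i D_i$ to get $V'=S\setminus\cup_i D_i$ fully essential and then passes to the component $V\supset V'$; both steps are immediate and the arguments coincide.
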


\begin{proof} We know that ${\cal M}\subset \inte(\cup _{i=1}^n D_i)$, where the $D_i$'s are pairwise disjoint closed disks in $S$.  Then,
$V' = S\backslash \cup _{i=1}^n D_i$ is open and connected, $V'\subset {\cal M}^c$, and $i_{*}: \pi_1(V',x_0)\to \pi_1(S, x_0)$ is surjective (see Lemma \ref{ltop2}).  We
take $V$ to be the connected component of
${\cal M}^c$ containing $V'$.

\end{proof}

\begin{lema}\label{bounded} If $U\in \Pi_0 ({\cal M}^c)$ is bounded, then $U$ is a topological disk.
\end{lema}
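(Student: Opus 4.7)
The plan is to work in the universal cover and prove that any lift of $U$ is simply connected, so $U$ itself is a topological disk.

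\textbf{Paragraph 1 (setup).} Take a connected component $\tilde U$ of $p^{-1}(U)$. Since $U$ is bounded, $\overline{\tilde U}\subset\D$ is compact. The first step is to observe that the stabilizer $\{g\in G:g(\tilde U)=\tilde U\}$ is trivial: the group $G$ is a cocompact, torsion-free Fuchsian group, so every non-identity element is a hyperbolic translation with both axis-endpoints on $S^1$, and no such isometry can preserve a relatively compact subset of $\D$. Hence $p|_{\tilde U}\colon\tilde U\to U$ is a homeomorphism. In particular every loop in $U$ lifts to a loop in $\tilde U\subset\D$, which is null-homotopic in $\D$, so $i_*\colon\pi_1(U,x_0)\to\pi_1(S,x_0)$ is the zero map.

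\textbf{Paragraph 2 (reduction and curve finding).} Since $p|_{\tilde U}$ is a homeomorphism, to prove $U$ is a disk it is enough to show $\tilde U$ is simply connected. Assume it is not: then one finds a simple closed curve $\tilde\gamma\subset\tilde U$ not contractible in $\tilde U$, bounding a disk $\tilde D_\gamma\subset\D$ with $\tilde D_\gamma\not\subset\tilde U$. Projecting, $\gamma=p(\tilde\gamma)\subset U$ is a simple loop that is null-homotopic in $S$ (since $i_*=0$) and bounds a disk $D_\gamma\subset S$ with $D_\gamma\not\subset U$. If it were that $D_\gamma\cap\mathcal M=\emptyset$ then $D_\gamma$, being connected with $\partial D_\gamma=\gamma\subset U$, would lie in the component $U$ of $\mathcal M^c$, a contradiction; so $D_\gamma\cap\mathcal M\neq\emptyset$. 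Since $i_*$ has non-trivial kernel (equal to all of $\pi_1(U)$), the unnamed lemma preceding Corollary \ref{iny} applies to $U$ and yields pairwise disjoint closed disks $D_1,\ldots,D_n$ with $\mathcal M\subset\inte\bigl(\bigcup_{j}D_j\bigr)$, and by Corollary \ref{iny} there is a fully essential $V\in\Pi_0(\mathcal M^c)$.

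\textbf{Paragraph 3 (contradiction via dynamics; main obstacle).} To close the argument I would use minimality: since $D_\gamma\cap\mathcal M$ is a non-empty relatively open subset of $\mathcal M$, density of any $\mathcal M$-orbit supplies $N>0$ and a suitable $y\in\partial U$ outside $\overline{D_\gamma}$ with $f^N(y)\in D_\gamma\cap\mathcal M$. The component $f^N(U)\in\Pi_0(\mathcal M^c)$ is connected and disjoint from $U\supset\gamma$, hence lies on one side of $\gamma$; having the boundary point $f^N(y)$ in $D_\gamma$ forces $f^N(U)\subset D_\gamma$ whenever $f^N(U)\neq U$. Choosing a lift $\tilde F$ of $f^N$ so that $\tilde F(\tilde U)\subset\tilde D_\gamma$, a Jordan-curve argument (the orientation-preserving $\tilde F$ sends the interior of $\tilde\gamma$ to the interior of $\tilde F(\tilde\gamma)\subset\tilde D_\gamma$) gives $\tilde F(\tilde D_\gamma)\subset\tilde D_\gamma$. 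The main obstacle is converting this nested invariance into an actual contradiction. Two cases arise. If $f^N(U)=U$, then $\tilde F$ is a non-trivial element $g\in G$; iterating $g^k(\tilde D_\gamma)\subset\tilde D_\gamma$ forces orbits of $g$ to remain in the relatively compact $\tilde D_\gamma$, contradicting the fact that the hyperbolic translation $g$ pushes every $\D$-orbit to $S^1$. Otherwise, Brouwer's fixed-point theorem applied to $\tilde F\colon\overline{\tilde D_\gamma}\to\overline{\tilde D_\gamma}$ produces a fixed point of $\tilde F$ projecting to a periodic point $q$ of $f^N$; if $q\in\mathcal M$ then minimality reduces $\mathcal M$ to a finite periodic orbit (so $\mathcal M^c$ has only one unbounded component, contradicting the existence of our bounded $U$), while if $q\notin\mathcal M$ one obtains an $f^N$-periodic component of $\mathcal M^c$ meeting $\overline{D_\gamma}$, from which a final analysis (using the disjoint disk structure $\{D_j\}$ and the fully essential $V$) delivers the contradiction. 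This last dichotomy is where the bulk of the delicate bookkeeping lies.
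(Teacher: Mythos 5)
Your construction in the universal cover is sound up to the end of Paragraph~2, and the observation that the stabilizer of a relatively compact $\tilde U$ in a torsion-free cocompact Fuchsian group is trivial (so that $p|_{\tilde U}$ is a homeomorphism and $i_*=0$) is correct. But the proof is not finished. In Paragraph~3 you explicitly reduce the crux of the matter to the dichotomy on the Brouwer fixed point $q$ and then, in the subcase $q\notin\mathcal M$, you write that ``one obtains an $f^N$-periodic component of $\mathcal M^c$ meeting $\overline{D_\gamma}$, from which a final analysis \dots delivers the contradiction,'' acknowledging this is where ``the bulk of the delicate bookkeeping lies.'' This is a genuine gap, not a routine omission: at that stage of the paper nothing rules out a periodic component of $\mathcal M^c$ meeting $\overline{D_\gamma}$, and there is no evident route from that datum plus the disk cover $\{D_j\}$ and the fully essential $V$ to a contradiction. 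A secondary slip: in the subcase $f^N(U)=U$ you assert ``$\tilde F$ is a non-trivial element $g\in G$''; that is false, since $\tilde F$ lifts $f^N$, not the identity. What is true (and salvages that subcase) is that $\tilde F(\tilde U)=g(\tilde U)$ for some non-trivial $g\in G$, and then $g(\tilde D_\gamma)\subset\tilde D_\gamma$ contradicts hyperbolicity of $g$; you should phrase it that way.

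It is also worth pointing out that your approach diverges structurally from the paper's proof and is considerably heavier. The paper does not pick an arbitrary non-contractible loop $\tilde\gamma\subset\tilde U$; instead it takes the unique connected component $\mathcal C$ of $\partial\tilde U$ that separates $\tilde U$ from $S^1$, sets $D$ to be the bounded component of $\D\setminus\mathcal C$, and proves the trichotomy ``for every lift $\tilde f$ and $k\in\Z$, $\tilde f^k(D)=D$ or $\tilde f^k(D)\cap D=\emptyset$'' (strict nesting produces a wandering point in $\mathcal M$; overlap without equality would make $\mathcal C$ fail to be a boundary component of $\tilde U$). Then, if $U$ were not a disk, $p(D)\cap\mathcal M\neq\emptyset$ forces a return $\tilde f^k(D)\cap D\neq\emptyset$, hence $\tilde f^k(D)=D$ and $\tilde f^k(\mathcal C)=\mathcal C$, so the component of $\mathcal M$ containing $p(\mathcal C)$ has a finite, closed, invariant orbit strictly smaller than $\mathcal M$ — a contradiction with minimality. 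This canonical choice of $D$ (the full ``hull'' of $\tilde U$) gives an invariance trichotomy for free and avoids both your Brouwer step and your reliance on the disk-cover lemma and Corollary~\ref{iny}. If you want to push your version through, I would advise replacing your ad hoc $\tilde D_\gamma$ by the paper's canonical $D$; as it stands, your Paragraph~3 is an outline, not a proof.
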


\begin{proof} As $U$ is bounded, if $\tilde U$ is a connected component of $p^{-1}(U)$, then $\partial \tilde U$ has a unique connected component ${\cal C}$ that
separates $S^1$ from $\tilde U$. Let $D$ be the bounded connected component of $\D\backslash {\cal C}$.  We claim that for any lift $\tilde f$ of $f$ and any $k\in\Z$, either $\tilde f^k(D)=D$, or $\tilde f^k(D) \cap D= \emptyset$.  It is clear that one can not have $\tilde f^k(D)\subsetneq D$ or $D\subsetneq \tilde f^k(D)$ as this would imply the existence of a wandering point in ${\cal M}$.  As $D$ and $f^k(D)$ are toplogical disks,  $\tilde f^k(D) \cap D\neq  \emptyset$ but $\tilde f^k(D)\neq D$ is impossible because this implies that ${\cal C}$ cannot be a connected component of $\partial \tilde U$, proving the claim. If $U$ is not a disk, then ${\cal M}\cap p(D)\neq \emptyset$, and there
exists a lift $\tilde f$ of $f$ and $k\in \Z$ such that $\tilde f^k(D) \cap D\neq \emptyset$.  Therefore, $\tilde f^k(D)=D$, which implies $\tilde f^k({\cal C}) = {\cal C}$.  If ${\cal K}$ is the connected component of ${\cal M}$ containing $p({\cal C})$, then the (periodic) orbit of ${\cal K}$ is a closed an invariant set, which is different from ${\cal M}$, as ${\cal M}\cap p(D)\neq \emptyset$.  This contradiction finishes the proof.

\end{proof}

\begin{lema}\label{inj} If $U\in \Pi_0 ({\cal M}^c)$ is not fully essential, then
\\$i_{*}: \pi_1(U,x_0)\to \pi_1(S, x_0)$ is injective.
\end{lema}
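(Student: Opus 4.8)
The plan is to argue by contradiction, using only the three statements already proved in this section. Assume $U\in\Pi_0(\mathcal{M}^c)$ is not fully essential but that $i_*:\pi_1(U,x_0)\to\pi_1(S,x_0)$ is \emph{not} injective (in particular $\pi_1(U,x_0)\neq 0$, so if $i_*$ is not injective then $U$ is not a disk). The first step is to feed this non-injectivity into Corollary \ref{iny}: since there is a complementary component of $\mathcal{M}$ on which the inclusion-induced map on $\pi_1$ is not injective, Corollary \ref{iny} produces a fully essential component $V\in\Pi_0(\mathcal{M}^c)$.

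The second step is a dichotomy according to whether $U$ and $V$ coincide. If $U=V$, then $U$ is fully essential, contradicting the hypothesis outright. If $U\neq V$, then $U$ and $V$ are distinct connected components of $\mathcal{M}^c$, hence disjoint connected subsets of $S$; since $V$ is fully essential and open and connected, Lemma \ref{ltop} forces $U$ to be bounded, and then Lemma \ref{bounded} shows $U$ is a topological disk. But $\pi_1$ of a disk is trivial, so $i_*$ is (trivially) injective, contradicting our standing assumption. In both branches we reach a contradiction, so $i_*$ must have been injective, which is what we wanted.

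I do not expect a genuine obstacle here: the lemma is essentially a bookkeeping consequence of Corollary \ref{iny}, Lemma \ref{ltop} and Lemma \ref{bounded}, organized around the trichotomy ``$U$ fully essential / $U$ bounded / $U$ a disk.'' The only point meriting a line of care is the basepoint issue — one should note that being fully essential, being injective, and being a disk are all independent of the choice of $x_0$ within $U$ (up to conjugation in $\pi_1(S)$), so that the hypotheses of Corollary \ref{iny} are met for the same $x_0\in U$ used in the statement, and that the conclusion ``$U$ is a disk'' indeed yields injectivity of $i_*$ because its \emph{domain} is then trivial.
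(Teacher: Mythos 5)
Your proof is correct and uses the same three ingredients as the paper (Corollary \ref{iny}, Lemma \ref{ltop}, Lemma \ref{bounded}), just organized as a single argument by contradiction instead of the paper's forward case-split on whether $U$ is essential or trivial and whether a fully essential component exists. Your version is arguably tidier: the dichotomy $U=V$ versus $U\neq V$ makes explicit the small point that the fully essential component produced by Corollary \ref{iny} cannot be $U$ itself (since $U$ is assumed not fully essential), which the paper handles implicitly, and your closing observation that non-injectivity of $i_*$ forces $\pi_1(U)\neq 0$, hence $U$ not a disk, is exactly the right way to reach a contradiction from Lemma \ref{bounded}.
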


\begin{proof} If $U$ is essential, then there are no fully essential components in $\Pi_0 ({\cal M}^c)$ (see Lemma \ref{ltop}).  So, $i_{*}: \pi_1(U,x_0)\to \pi_1(S, x_0)$ is injective, this being the content of Corollary \ref{iny}.
If $U$ is trivial and there are no fully essential components in $\Pi_0 ({\cal M}^c)$, then  $i_{*}: \pi_1(U,x_0)\to \pi_1(S, x_0)$ is injective again by Corollary \ref{iny}.  If there is a fully essential component $V\in \Pi_0 ({\cal M}^c)$, then $U$ is bounded (see Lemma \ref{ltop}) and so by Lemma \ref{bounded} $i_{*}: \pi_1(U,x_0)\to \pi_1(S, x_0)$ is injective.
\end{proof}

The proof of Theorem \ref{prel} follows.

\begin{proof} If (1) holds, then evidently neither (2) nor (3) hold.  If (2) holds, then evidently (1) does not hold, and (3) cannot hold due to lemma \ref{ltop}. If (3) holds, then evidently (1) does not hold, and (2) cannot hold because of lemma \ref{ltop}. So, only one of them can hold.

Suppose that (3) does not hold.  So, by Corollary \ref{iny}, for every $U\in \Pi_0 ({\cal M}^c)$ the map
$i_{*}: \pi_1(U,x_0)\to \pi_1(S, x_0)$ is injective.  Then necessarily  (1) or (2) hold and we are done.
\end{proof}

\begin{cor}\label{puncturedsurf} If $U\in \Pi_0({\cal M}^c)$ is essential, then $U$ is homeomorphic to a finitely punctured closed surface.
\end{cor}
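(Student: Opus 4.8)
The plan is to show that $U$, as an open surface, has finite genus and only finitely many ends; the classification of non-compact surfaces then yields that $U$ is homeomorphic to a closed surface with finitely many points deleted, i.e.\ to a finitely punctured closed surface.

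First, since $U$ is essential, Theorem~\ref{prel} forces alternative (2), so $i_{*}:\pi_1(U,x_0)\to\pi_1(S,x_0)$ is injective. Hence a connected component $\tilde U$ of $p^{-1}(U)$ is simply connected (its deck group is $\mathrm{Stab}_{G}(\tilde U)\cong i_{*}\pi_1(U,x_0)$, while $\pi_1(\tilde U)=\ker i_{*}=0$); thus $\tilde U\cong\R^2$, and $U=\tilde U/\Gamma$ with $\Gamma:=\mathrm{Stab}_{G}(\tilde U)$ acting freely and properly discontinuously, $g\tilde U\cap\tilde U\ne\emptyset$ implying $g\in\Gamma$ because $\tilde U$ is a component of the $G$-invariant set $p^{-1}(U)=\D\setminus p^{-1}(\mathcal{M})$. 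Moreover $g(U)\le g(S)<\infty$: any compact subsurface of $U$ embeds in $S$, hence has genus $\le g(S)$. So $U$ is homeomorphic to a closed surface of genus $g(U)$ with its compact, totally disconnected space of ends deleted, and it remains only to prove that this space is finite.

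The next step is to observe that any simple closed curve $c\subset U$ splitting $U$ into two pieces each containing an end of $U$ must be essential in $S$. Indeed, if $c$ were null-homotopic in $S$, then it would be null-homotopic in $U$ by injectivity of $i_{*}$; lifting to $\tilde U\cong\R^2$ gives an embedded circle bounding a disc $\tilde D$ with $\overline{\tilde D}$ compact in $\tilde U$, and freeness of the $\Gamma$-action together with Brouwer's theorem forces $g\tilde D\cap\tilde D=\emptyset$ for $g\in\Gamma\setminus\{\id\}$; so $p|_{\tilde D}$ is injective, $p(\tilde D)$ is an embedded disc in $U$ with frontier $c$, and one side of $c$ is then relatively compact in $U$ and carries no end — a contradiction. (Alternatively, such a $c$ would bound a disc in $S$ meeting $\mathcal{M}$, placing us by the lemma preceding Corollary~\ref{iny} and by that corollary in alternative (3), against (2).) Consequently, if $U$ had infinitely many ends, I would pick a nested sequence $N_1\supsetneq N_2\supsetneq\cdots$ of end-neighbourhoods, each with a single simple closed curve $c_i$ as frontier in $U$ and with $N_i\setminus\overline{N_{i+1}}$ carrying an end; the $c_i$ are pairwise disjoint and, by the above, essential in $S$. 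Since $S$ is closed of genus $g(S)\ge 2$, it admits at most $3g(S)-3$ pairwise disjoint, pairwise non-isotopic essential simple closed curves, so infinitely many of the $c_i$ are mutually isotopic in $S$; then they are parallel, consecutive ones cobounding pairwise disjoint annuli $A_i\subset S$ shrinking to a curve $\delta_\infty$, and each $A_i$ meets $\mathcal{M}$ — otherwise $A_i\subset U$ and the region of $U$ between $c_i$ and $c_{i+1}$ would be this relatively compact annulus, which carries no end.

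The main obstacle is to rule out such a parallel family, and this is where the minimality of $\mathcal{M}$ must be used: $\pi_1$-injectivity alone does not forbid an infinite disjoint parallel family of essential curves in $U$ (already $S$ minus a point contains one). The plan here is: from $\mathcal{M}\cap A_i\ne\emptyset$ for all $i$ and $A_i\to\delta_\infty$ one obtains $\mathcal{M}\cap\delta_\infty\ne\emptyset$; lifting, the $c_i$ lift to pairwise disjoint proper lines $\tilde c_i\subset\tilde U$ converging to a line $\tilde\delta_\infty$, with $p^{-1}(\mathcal{M})$ accumulating on $\tilde\delta_\infty$, and one would derive a contradiction from the fact that all the $\tilde c_i$ lie in the single component $\tilde U$ of $\D\setminus p^{-1}(\mathcal{M})$ while the $\mathcal{M}$-pieces between consecutive ones separate them — exploiting that the orbit of a point of $\mathcal{M}\cap\delta_\infty$ is dense in $\mathcal{M}$ and hence must meet all the pieces $\mathcal{M}\cap A_i$. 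With infinitely many ends excluded and $g(U)<\infty$, the surface $U$ is of finite topological type, hence homeomorphic to a finitely punctured closed surface.
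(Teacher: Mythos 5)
Your overall route is the same as the paper's: reduce, via the classification of non-compact surfaces, to showing $U$ has finitely many ends, and then use $\pi_1$-injectivity together with the bound on pairwise disjoint essential simple closed curves in a closed surface of genus $g$. The early parts of your argument (simple connectivity of $\tilde U$, finiteness of $g(U)$, essentiality in $S$ of curves separating ends) are correct and are sound elaborations of what the paper leaves implicit.

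However, the proof as written has a genuine gap at the decisive step. You correctly observe that the $3g-3$ bound only rules out infinitely many pairwise \emph{non-isotopic-in-$S$} curves, so you must handle the possibility that infinitely many of the $c_i$ are mutually isotopic in $S$ and thus form a parallel family with the intermediate annuli $A_i$ meeting $\mathcal{M}$. But the final paragraph that is supposed to exclude this configuration is explicitly a \emph{plan}, not an argument: ``The plan here is \dots one would derive a contradiction \dots''. No contradiction is actually derived. Several of its ingredients are themselves unjustified: it is not clear that coherent lifts $\tilde c_i$ of all the $c_i$ can be chosen \emph{inside the same} component $\tilde U$ while remaining nested near a single axis $\tilde\delta_\infty$; it is not clear that the sets $\mathcal{M}\cap A_i$ (which need not be essential in $A_i$) actually separate $\tilde c_i$ from $\tilde c_{i+1}$ in $\D$; and the appeal to density of an orbit in $\mathcal{M}$ is asserted but never turned into a contradiction. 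As it stands the proposal does not prove the corollary.

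It is worth noting that the parallel-family obstacle you worry about can be bypassed entirely by an Euler-characteristic count, staying in the spirit of the paper's one-line justification. If $U$ had $N$ ends, take a compact subsurface $K\subset U$ of genus $g(U)$ with $N$ boundary circles, each separating off an end. By $\pi_1$-injectivity every component of $\partial K$ is essential in $S$, so no component of $S\setminus\operatorname{int}K$ is a disk or a sphere; hence $\chi(S\setminus\operatorname{int}K)\le 0$ and $\chi(K)\ge\chi(S)$, i.e. $2-2g(U)-N\ge 2-2g(S)$, giving $N\le 2\bigl(g(S)-g(U)\bigr)$. This finiteness of the end set is exactly what is needed, and it avoids any discussion of isotopy classes in $S$ versus in $U$.
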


\begin{proof}\label{ri} By a theorem of Richards (see \cite{richards} Theorem 3), $U$ is homeomorphic to $\Sigma \backslash X$, where $\Sigma$ is a closed surface and $X$ is a closed and totally disconnected set.  We just have to prove that $X$ has finitely many connected components.  If this was not the case, there would be infinitely many homotopically non trivial, pairwise non freely homotopic and pairwise disjoint loops $(\gamma _n)_{n\in \N}\subset U$.  As $i_{*}: \pi_1(U,x_0)\to \pi_1(S, x_0)$ is injective and $S$ is a closed surface, this is a contradiction.
\end{proof}

\begin{cor}\label{finiteboundcomp} If $U\in \Pi_0({\cal M}^c)$ is essential, then $\partial U$ has finitely many connected components.
\end{cor}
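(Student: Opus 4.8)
The plan is to exploit the structure of $U$ obtained in Corollary \ref{puncturedsurf}: since $U$ is homeomorphic to a closed surface with finitely many points removed, it has finitely many ends, and I will show that $\partial U$ is covered by the ``impressions'' in $S$ of these ends, each of which is a continuum; this immediately bounds the number of connected components of $\partial U$.

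Concretely, I would first fix an identification $U\cong \Sigma\setminus\{p_1,\dots,p_m\}$ with $\Sigma$ closed, and choose pairwise disjoint closed disks $\overline{D_j}\subset \Sigma$ with $p_j\in\inte(D_j)$. Then $K:=\Sigma\setminus\bigcup_j\inte(D_j)$ is a compact subsurface contained in $U$, and for each $j$ the set $A_j:=\overline{D_j}\setminus\{p_j\}$ is a half-open annulus, closed in $U$, with $U=K\cup A_1\cup\dots\cup A_m$. Setting $E_j:=\overline{A_j}\setminus A_j$ with the closure taken in $S$, the facts that $K$ and every $A_j$ lie in $U$ and that $A_j$ is closed in $U$ give at once $E_j\cap U=\emptyset$, $E_j\subset\overline{U}$, and $\overline{U}=U\cup E_1\cup\dots\cup E_m$; hence $\partial U=E_1\cup\dots\cup E_m$.

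The main point is then that each $E_j$ is connected. For this I would exhaust $A_j$ by collar neighbourhoods $V_{j,n}\cong S^1\times(0,1/n)$ of the puncture $p_j$ (so that $A_j\setminus V_{j,n}$ is a compact sub-annulus) and check that $E_j=\bigcap_{n\ge 1}\overline{V_{j,n}}$, the closures being taken in $S$. The inclusion $E_j\subset\overline{V_{j,n}}$ holds because a neighbourhood of a point of $E_j$ disjoint from $V_{j,n}$ would meet $A_j$ only inside $A_j\setminus V_{j,n}$, a set closed in $S$ and contained in $A_j$, forcing that point into $A_j$ --- impossible. The reverse inclusion holds because $\bigcap_n\big(\overline{V_{j,n}}\cap A_j\big)=\bigcap_n\big(S^1\times(0,1/n]\big)=\emptyset$. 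Since each $\overline{V_{j,n}}$ is a continuum (the closure of a connected set in the compact space $S$) and the family is nested, the intersection $E_j$ is a continuum. I expect this verification --- identifying $E_j$ as a nested intersection of continua, hence connected --- to be the only genuinely delicate step; everything else is bookkeeping with closures in $S$.

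Finally, $\partial U$ is the union of the finitely many connected sets $E_1,\dots,E_m$. Since each nonempty $E_j$, being connected, is contained in a single connected component of $\partial U$, and the $E_j$ cover $\partial U$, the set $\partial U$ has at most $m$ connected components, which is the assertion. (Equivalently, one may phrase the whole argument directly in terms of the prime-end-type impression of each end of $U$; I would use whichever formulation is lighter given the notation already set up.)
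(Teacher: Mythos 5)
Your proof is correct and takes essentially the same route as the paper: both reduce to the finitely-punctured-surface structure from Corollary~\ref{puncturedsurf} and then bound $\#\Pi_0(\partial U)$ by the number of punctures. The paper simply asserts this last step in one line, whereas your nested-continuum argument showing each impression $E_j$ is connected supplies exactly the detail it leaves implicit.
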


\begin{proof}  By the previous Corollary, $U$ is homeomorphic to a finitely punctured closed surface. It follows that the number of punctures is greater or equal than $\# \Pi_0(\partial U)$.
\end{proof}

\begin{section}{Periodic disks in $\Pi_0 ({\cal M}^c)$}

We will say that a minimal set ${\cal M}$ is of type 1, 2 or 3 if it corresponds to cases  (1), (2) or (3) of Theorem \ref{prel}.

We devote this section to proving that there cannot be bounded periodic disks in $\Pi_0(\mathcal{M}^c)$ for a minimal set of type (1) or (2).  As an outcome of this fact we obtain that for type (1) minimal sets there is always a periodic unbounded disk in $\Pi_0(\mathcal{M}^c)$ .

We remark that the following four lemmas are just an adaptation of the ideas in [JKP] to higher genus.

\begin{lema}\label{per} If ${\cal M}$ is connected and $U\in\Pi_0(\mathcal{M}^c)$ is a periodic disk of period $p$, then ${\cal M} = \partial U = \ldots =\partial f^{p-1} (U)$.
\end{lema}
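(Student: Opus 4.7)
The plan is to prove the stronger claim that $\mathcal{M} = \partial V$ for \emph{any} component $V \in \Pi_0(\mathcal{M}^c)$ whenever $\mathcal{M}$ is connected; the lemma then follows at once by applying this to each of the $p$ disks $U, f(U), \ldots, f^{p-1}(U)$, which all lie in $\Pi_0(\mathcal{M}^c)$ since $f$ is a homeomorphism permuting the components of $\mathcal{M}^c$.

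To prove the claim, I would first check that $\partial V \subset \mathcal{M}$: a point $x \in \partial V = \overline V \setminus V$ cannot lie in any other component $W$ of $\mathcal{M}^c$, for then $W$ would be an open neighbourhood of $x$ disjoint from $V$, contradicting $x \in \overline V$; hence $x \in \mathcal{M}$. Since $V$ is open and $V \cap \mathcal{M} = \emptyset$, we have $\overline V \cap \mathcal{M} = \partial V$, giving the partition
$$\mathcal{M} = \partial V \sqcup (\mathcal{M} \setminus \overline V),$$
where $\partial V$ is closed in $\mathcal{M}$ and $\mathcal{M} \setminus \overline V$ is open in $\mathcal{M}$. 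Since $\mathcal{M} \neq \emptyset$, $V$ is a proper open subset of the connected surface $S$, hence $V$ is not closed and $\partial V \neq \emptyset$. The connectedness of $\mathcal{M}$ then forces $\mathcal{M} \setminus \overline V = \emptyset$, i.e.\ $\mathcal{M} \subset \overline V$; together with $\mathcal{M} \cap V = \emptyset$ this yields $\mathcal{M} \subset \partial V$, and hence $\mathcal{M} = \partial V$.

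Note that neither the disk hypothesis nor the periodicity of $U$ is really needed for the core argument; only the connectedness of $\mathcal{M}$ and the fact that each $f^k(U)$ is an open proper subset of $S$ disjoint from $\mathcal{M}$ are used. I do not anticipate any serious obstacle: the crux is merely the elementary observation that a connected space cannot be split as the disjoint union of a non-empty closed subset and an open subset.
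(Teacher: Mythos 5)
The step where you invoke connectedness is broken. You write $\mathcal{M} = \partial V \sqcup (\mathcal{M}\setminus\overline V)$ and note that $\partial V$ is closed in $\mathcal{M}$ while $\mathcal{M}\setminus\overline V$ is open in $\mathcal{M}$ — but those two facts are one and the same, being complements of each other inside $\mathcal{M}$. For connectedness to force $\mathcal{M}\setminus\overline V = \emptyset$, you would also need $\partial V$ to be \emph{open} in $\mathcal{M}$ (equivalently, $\mathcal{M}\setminus\overline V$ closed), and nothing you have said gives that. A connected space splits into a nonempty closed piece and a complementary open piece all the time; the obstruction is only to a nonempty \emph{clopen} proper piece.

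In fact your ``stronger claim'' is false even as a purely topological statement: take $\mathcal{M}$ to be a figure-eight curve (two simple loops meeting at a single point) inside a coordinate disk in $S$. Then $\mathcal{M}$ is connected and compact, and if $V$ is one of the two bounded disk components of $\mathcal{M}^c$, $\partial V$ is one of the two loops, a proper subset of $\mathcal{M}$. The hypotheses you kept (connectedness of $\mathcal{M}$, $V$ a component of $\mathcal{M}^c$) are satisfied, yet $\mathcal{M}\neq\partial V$. So the disk hypothesis and, more importantly, the dynamics cannot be discarded.

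The paper's proof uses minimality of $\mathcal{M}$ and the periodicity of $U$ in an essential way. First, $\partial U\cup\cdots\cup\partial f^{p-1}(U)$ is a nonempty closed $f$-invariant subset of $\mathcal{M}$, so by minimality it equals $\mathcal{M}$. Then, setting $r(x)=\#\{0\le k<p:\ x\in\partial f^k(U)\}$, each superlevel set $\{r\ge k_0\}$ is closed and invariant, hence by minimality either empty or all of $\mathcal{M}$, forcing $r$ to be a constant $m$. Finally the intersections $I_x=\bigcap_{i}\partial f^{k_i}(U)$ over the $m$ indices with $x\in\partial f^{k_i}(U)$ form a finite partition of $\mathcal{M}$ into closed, pairwise disjoint sets; \emph{now} connectedness (applied to a partition into \emph{clopen} pieces, since a finite partition into closed sets is automatically clopen) forces a single class, hence $\partial U=\cdots=\partial f^{p-1}(U)=\mathcal{M}$. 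So the connectedness argument is correct in the paper precisely because minimality first produces a \emph{finite} decomposition into closed sets, which your topological shortcut skips.

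Note also that your first computation $\mathcal{M}=\partial U\cup\cdots\cup\partial f^{p-1}(U)$ — which you do not even state but would also need — already uses minimality, not just that $U$ and its iterates are disjoint from $\mathcal{M}$; without minimality there is no reason for $\mathcal{M}$ to be contained in $\overline U\cup\cdots\cup\overline{f^{p-1}(U)}$.
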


\begin{proof}  The set $\partial U \cup \ldots \cup \partial f^{p-1} (U)$ is closed, invariant and contained in ${\cal M}$.  So, ${\cal M} = \partial U \cup \ldots \cup \partial f^{p-1} (U)$.  Let
$r(x) = \# \{0\leq k <p: x\in \partial f^k(U)\}$. Then, for any $k_0\in \{1, \ldots, p\}$ the set $r^{-1}(\{k: k\geq k_0\})$ is closed and invariant, and therefore empty or equal to ${\cal M}$.  This implies that there exists $m\geq 1$ such that $r(x) = m$ for all $x\in{\cal M}$.

For every $x\in {\cal M}$ define $I_x = \cap _{i=1}^m \partial f^{k_i}(U)$, where $k_1, \ldots ,k_m\in\{0,...,p-1\}$ are such that $x\in \partial f^{k_i} (U)$. Then, $I_x$ is closed and
${\cal M} = I_{x_1}\cup \ldots \cup I_{x_n}$ with $I_{x_i}\neq I_{x_j}$ if $i\neq j$.  Furthermore, $I_{x_i}\cap I_{x_j} = \emptyset$ if $i\neq j$, because otherwise there would exist $z\in {\cal M}$ with $r(z)>m$.  As ${\cal M}$ is connected, we have ${\cal M} = I_{x_1} = \ldots = I_{x_n}$, which implies ${\cal M} = \partial U = \ldots =\partial f^{p-1} (U)$.
\end{proof}

\begin{lema}  Suppose that ${\cal M}$ is connected and there exists $U\in\Pi_0(\mathcal{M}^c)$ such that $U$ is bounded and periodic. Let $\tilde U$ be a connected component of $p^{-1}(U)$.  Then, for each $x\in {\cal M} = \partial U$ and $\tilde x \in \partial \tilde U$, $r(x) = \# \{T\in G: T(\tilde x)\in\partial \tilde U\}$ is a well defined constant function in ${\cal M}$.
\end{lema}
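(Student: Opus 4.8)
The statement asserts that, under the hypotheses (${\cal M}$ connected, $U$ a bounded periodic disk, $\tilde U$ a fixed component of $p^{-1}(U)$), the function $r(x) = \#\{T \in G : T(\tilde x) \in \partial \tilde U\}$ — where $\tilde x$ is any lift of $x$ lying in $\partial \tilde U$ — is (i) well defined, i.e.\ independent of the chosen lift $\tilde x \in \partial \tilde U$, and (ii) constant on ${\cal M}$. The plan is to run the same "level-set is closed and invariant, hence all of ${\cal M}$" argument as in Lemma \ref{per}, transported up to the cover.

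\emph{First, well-definedness.} By Lemma \ref{per} we already have ${\cal M} = \partial U$, so $\partial\tilde U$ is a connected component of $p^{-1}({\cal M})$. Two lifts $\tilde x, \tilde x'$ of the same point $x \in {\cal M}$ both lying in $\partial\tilde U$ are related by $\tilde x' = T_0(\tilde x)$ for a unique $T_0 \in G$ which, by definition of $r$, satisfies $T_0(\tilde x) \in \partial\tilde U$. Since $G$ acts freely and $\partial\tilde U$ is $T_0$-translated onto a boundary component of $T_0(\tilde U)$, and since $\partial\tilde U$ contains both $\tilde x$ and $T_0(\tilde x)$, the component $\partial\tilde U$ is $T_0$-invariant; hence $T \mapsto T_0 T$ is a bijection between $\{T : T(\tilde x)\in\partial\tilde U\}$ and $\{T : T(\tilde x')\in\partial\tilde U\}$, giving $r$ independent of the lift. (One should note here that $\partial\tilde U$ carries no accumulation of translates that would make the set infinite \emph{a priori} — this is exactly what boundedness of $U$ buys us, via Lemma \ref{bounded} and the argument in its proof showing $\tilde f^k(D) = D$ or disjoint; I will use boundedness to guarantee $r(x) < \infty$.)

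\emph{Second, constancy.} Fix a lift $\tilde f$ of $f^p$ with $\tilde f(\tilde U) = \tilde U$ (possible since $\tilde U$ is a fixed component of $p^{-1}(U)$ and $f^p(U) = U$). For each $k_0 \ge 1$ consider the set ${\cal M}_{k_0} = \{x \in {\cal M} : r(x) \ge k_0\}$, lifted to $\tilde{\cal M}_{k_0} = \{\tilde x \in \partial\tilde U : \#\{T : T(\tilde x)\in\partial\tilde U\} \ge k_0\}$. This upstairs set is closed in $\partial\tilde U$ (a limit of points with $k_0$ distinct translates in the closed set $\partial\tilde U$ still has $k_0$ such translates, using that $G$ acts properly discontinuously so the relevant translates stay in a finite set near any compact piece) and it is invariant under $\tilde f$ and under every $T \in G$ preserving $\partial\tilde U$; projecting, ${\cal M}_{k_0}$ is closed and $f^p$-invariant, hence also $f$-invariant after replacing it by its $f$-orbit, so by minimality it is empty or all of ${\cal M}$. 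Therefore $r$ takes a single value $m \ge 1$ on ${\cal M}$.

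\emph{Main obstacle.} The delicate point is the closedness (and finiteness) of the level sets upstairs: one must rule out that as $\tilde x_n \to \tilde x$ in $\partial\tilde U$, the translates $T$ witnessing $r(\tilde x_n) \ge k_0$ escape to infinity in $G$. This is where properness of the $G$-action together with boundedness of $\tilde U$ (so that $\partial\tilde U$ is compact, being the boundary of a relatively compact set — or at least its separating component, as in Lemma \ref{bounded}, is compact) is essential: only finitely many translates of the compact set $\overline{\tilde U}$ meet a neighbourhood of $\tilde x$, so the witnessing $T$'s lie in a fixed finite set and the limit inherits them. Once this compactness bookkeeping is set up cleanly, the rest is a verbatim repeat of the Lemma \ref{per} minimality argument.
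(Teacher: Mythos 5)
Your plan (finiteness of $r$ from boundedness, then closedness of the superlevel sets, then invariance plus minimality) is the same as the paper's, and the closedness argument via proper discontinuity is essentially the paper's argument read contrapositively. The discrepancy, and the genuine gap, is in the invariance step. You lift $f^p$ to a map $\tilde f$ preserving $\tilde U$, which gives only that the superlevel sets $\mathcal{M}_{k_0}$ are \emph{$f^p$-invariant}, and you then say ``hence also $f$-invariant after replacing it by its $f$-orbit, so by minimality it is empty or all of $\mathcal{M}$.'' That inference does not close: what minimality of $(f,\mathcal{M})$ gives you is that $\bigcup_{i=0}^{p-1}f^i(\mathcal{M}_{k_0})$ is empty or all of $\mathcal{M}$, and from the latter alternative you cannot conclude that $\mathcal{M}_{k_0}$ itself equals $\mathcal{M}$; so the final line ``Therefore $r$ takes a single value $m$'' does not follow from what you proved.

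The paper avoids this by proving that $r$ is genuinely $f$-invariant, not just $f^p$-invariant: since Lemma \ref{per} gives $\mathcal{M}=\partial U=\partial f(U)=\cdots=\partial f^{p-1}(U)$, one may choose a lift $\tilde f$ of $f$ \emph{itself} (not $f^p$) with $\tilde f(\partial\tilde U)=\partial\tilde U$ (take any lift $\tilde g$ of $f$, observe $\tilde g(\partial\tilde U)$ is a $G$-translate of $\partial\tilde U$, and precompose with the inverse translation). Then the sets $X_{n_0}=r^{-1}(\{n\geq n_0\})$ are closed and $f$-invariant, and minimality applies directly. If you prefer to retain your lift of $f^p$, the correct repair is different from the one you attempt: note that a connected $f$-minimal set is automatically $f^p$-minimal — if $X\subset\mathcal{M}$ is $f^p$-minimal, then the distinct sets among $X,f(X),\ldots,f^{p-1}(X)$ are pairwise disjoint and their union is $\mathcal{M}$, which by connectedness forces $X=\mathcal{M}$. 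With that observation, $f^p$-invariance of the closed set $\mathcal{M}_{k_0}$ already yields $\mathcal{M}_{k_0}\in\{\emptyset,\mathcal{M}\}$, and the hypothesis that $\mathcal{M}$ is connected (which you did not use in the constancy step) is exactly what is needed.
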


\begin{proof} As $U$ is bounded, for each $x\in {\cal M} = \partial U$ and $\tilde x\in\partial \tilde U$ the set
$$\{T\in G: T(\tilde x)\in\partial\tilde U\}$$\noindent is finite.  Moreover, $$r(x)= \# \{T\in G: T(\tilde x)\in\partial \tilde U\}$$\noindent is independent of
 $\tilde x \in \partial \tilde U \cap p^{-1}(x)$.  So, we have a well defined function $r: {\cal M}\to \N$.  We claim that for any $n_0\in \N$, the set
 $r^{-1}(\{n<n_0\})$ is open.  Indeed, take $x\in r^{-1}(\{n<n_0\})$ and $\tilde x \in \partial \tilde U \cap p^{-1}(x)$.  Consider a neighborhood $V$ of
 $\tilde x$ such that the sets $T(V), T \in G,$ are pairwise disjoint and $p(V)$ is a neighborhood of $x$. There exists $n<n_0$ and $T_1, \ldots, T_n \in G$ such that for any $T\in G, T \notin \{T_1, \ldots, T_n\}$, $T(\tilde x)\notin \partial \tilde U$.  Taking $V$ smaller if necessary, we can suppose that $T(V)\cap \partial \tilde U = \emptyset$ for all $T\in G, T \notin \{T_1, \ldots, T_n\}$.  So, for any $z\in V$, $T(z)\in \partial \tilde U$ if and only if $T\in\{T_1, \ldots, T_n\}$.  In particular, $p(z)\in r^{-1}(\{n<n_0\})$.  So, $p(V)\subset  r^{-1}(\{n<n_0\})$ showing that $ r^{-1}(\{n<n_0\})$ is open.  Equivalently, the sets $X_{n_0} = r^{-1}\{n\geq n_0\}$ are closed for all $n_0\in \N$.  To see that these sets are also $f$- invariant it is enough to be able to lift $f$ to a homeomorphism $\tilde f:\tilde S\to \tilde S$ such that $\partial \tilde U$ is $\tilde f$-invariant. As ${\cal M} = \partial U = \partial f(U)$, if $\tilde g$ is any lift of $f$, then  $\tilde g (\partial \tilde U) = \partial \tilde g (U) = T(\partial \tilde U)$ for some $T\in G$. Then, $\tilde f = T^{-1}g$ is a lift of $f$ fixing $\partial \tilde U$. By minimality, one obtains that $r(x)$ is independent of $x\in{\cal M}$.
\end{proof}

\begin{lema}\label{T} Suppose that ${\cal M}$ is connected and that there exists $U\in\Pi_0(\mathcal{M}^c)$ such that $U$ is bounded and periodic.  Then ${\cal T} _x = \{T\in G: T(x)\in \partial \tilde U\}$ is independent of $x\in \partial \tilde U$.
\end{lema}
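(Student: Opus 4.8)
The plan is to upgrade the openness argument used in the preceding lemma from cardinalities to the \emph{sets} ${\cal T}_x$ themselves, and then to conclude by connectedness of $\partial\tilde U$.

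First I would record what is already available. By Lemma~\ref{bounded}, $U$ is a topological disk; being simply connected, each connected component $\tilde U$ of $p^{-1}(U)$ is carried homeomorphically onto $U$ by $p$, so $\tilde U$ is a relatively compact (since $U$ is bounded) simply connected open subset of $\D$. A bounded simply connected planar domain has connected frontier, so $\partial\tilde U$ is connected; in fact it reduces to the single separating continuum considered in the proof of Lemma~\ref{bounded}. By Lemma~\ref{per}, $\partial U=\mathcal{M}$, hence $p(\partial\tilde U)\subseteq\overline U\setminus U=\mathcal{M}$, and any point of $p^{-1}(U)$ lying in $\partial\tilde U$ would be interior to $\tilde U$, so $\partial\tilde U\subseteq p^{-1}(\mathcal{M})$. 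In particular every $x\in\partial\tilde U$ is an admissible choice of ``$\tilde x$'' in the preceding lemma, and that lemma gives that $\#{\cal T}_x=\#\{T\in G:T(x)\in\partial\tilde U\}=m$ for a constant $m$ not depending on $x\in\partial\tilde U$.

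Next I would prove that $x\mapsto{\cal T}_x$ is locally constant on $\partial\tilde U$. Fix $x_0\in\partial\tilde U$ and write ${\cal T}_{x_0}=\{T_1,\dots,T_m\}$. Choose a relatively compact open neighborhood $V$ of $x_0$ in $\D$ with $\{T(V):T\in G\}$ pairwise disjoint. By proper discontinuity only finitely many $T\in G$ satisfy $T(\overline V)\cap\overline{\tilde U}\neq\emptyset$; for each such $T\notin{\cal T}_{x_0}$ one has $T(x_0)\notin\overline{\tilde U}$ (it lies neither in $\partial\tilde U$, as $T\notin{\cal T}_{x_0}$, nor in $\tilde U$, since $T(x_0)\in p^{-1}(\mathcal{M})$ while $\tilde U\subseteq p^{-1}(\mathcal{M}^c)$), so after shrinking $V$ we may assume $T(V)\cap\overline{\tilde U}=\emptyset$ for every $T\notin{\cal T}_{x_0}$. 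Then for any $z\in V\cap\partial\tilde U$ and any $T$ with $T(z)\in\partial\tilde U\subseteq\overline{\tilde U}$ we get $T(V)\cap\overline{\tilde U}\neq\emptyset$, forcing $T\in{\cal T}_{x_0}$; hence ${\cal T}_z\subseteq{\cal T}_{x_0}$. Since $\#{\cal T}_z=m=\#{\cal T}_{x_0}$, in fact ${\cal T}_z={\cal T}_{x_0}$, so ${\cal T}$ is constant on the neighborhood $V\cap\partial\tilde U$ of $x_0$. Finally, a locally constant map on the connected set $\partial\tilde U$ is constant, which is the assertion.

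The proper-discontinuity bookkeeping and the local-constancy step are routine (the former is essentially done already in the preceding lemma). The one point that genuinely requires ``$U$ is a disk'' rather than merely ``$U$ is bounded'' — and hence the thing to get right — is the connectedness of $\partial\tilde U$: without it the argument only yields that ${\cal T}_x$ is constant on each connected component of $\partial\tilde U$, which is not enough. So I would make sure the planar-topology input (the frontier of a bounded simply connected domain is connected) is cleanly justified or cited.
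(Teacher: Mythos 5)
Your proof is correct and takes essentially the same route as the paper: both reduce the claim to the connectedness of $\partial\tilde U$ together with the constancy of $\#\mathcal{T}_x=m$ from the preceding lemma, the paper packaging the connectedness step as a finite partition of $\partial\tilde U$ into the disjoint closed sets $C_x=\bigcap_{T\in\mathcal{T}_x}T^{-1}(\partial\tilde U)$, while you phrase it as local constancy of $x\mapsto\mathcal{T}_x$. Your explicit justification that $\partial\tilde U$ is connected (via Lemma~\ref{bounded} and the fact that a bounded simply connected planar domain has connected frontier) is a point the paper uses without comment, so flagging it is reasonable.
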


\begin{proof}The previous lemma implies that there exists $m\geq 0$ such that for any $x\in \partial \tilde U$, $\# {\cal T}_x =m$.  As $U$ is bounded, the family
 $\{{\cal T} _x : x\in \partial U\}$ is finite. For all $x\in \partial U$ define $C_x = \cap _{i=1}^m T_{x_i}(\partial U)$, where $x\in T_{x_i}(\partial U)$ for
all $i=1, \ldots, m$.  Then, each $C_x$ is closed and the family ${\cal F}=\{C_x:x\in \partial U\}$ is finite, ${\cal F}=\{ C_{x_1}, \ldots, C_{x_n}\}$.  Furthermore, $C_{x_i} \cap C_{x_j} = \emptyset$ if $i\neq j$: otherwise, there would exist $x\in \partial \tilde U$ with $\# {\cal T}_x > m$.  As $\partial \tilde U$ is connected, we have $\partial \tilde U = C_{x_1} = \ldots = C_{x_n}$ and we are done.

\end{proof}

\begin{lema}\label{con} If ${\cal M}$ is a connected type 1 or 2 minimal set and $U\in\Pi_0(\mathcal{M}^c)$ is bounded, then $U$ is wandering.
\end{lema}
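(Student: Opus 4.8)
The plan is to argue by contradiction: suppose $U \in \Pi_0(\mathcal{M}^c)$ is bounded and periodic, and derive a contradiction with the assumption that $\mathcal{M}$ is of type 1 or 2. First I would reduce to the connected case. Since $U$ is periodic of some period $p$, the orbit $U, f(U), \dots, f^{p-1}(U)$ consists of bounded periodic disks (each is a disk by Lemma \ref{bounded}), and by Lemma \ref{per} we have $\mathcal{M} = \partial U = \partial f(U) = \dots = \partial f^{p-1}(U)$ once we know $\mathcal{M}$ is connected; so the first step is to check that a type 1 or 2 minimal set possessing a periodic disk is necessarily connected — or to work directly with the connected component of $\mathcal{M}$ containing $\partial U$ and observe it is closed and invariant, hence all of $\mathcal{M}$. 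With connectedness in hand, Lemmas \ref{T} and the preceding one apply.

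Next I would extract the combinatorial structure produced by the previous three lemmas. Fix a connected component $\tilde U$ of $p^{-1}(U)$ and let $\partial \tilde U$ denote the boundary component separating $\tilde U$ from $S^1$ (as in Lemma \ref{bounded}); by the lift-fixing argument there is a lift $\tilde f$ of $f^p$ with $\tilde f(\partial \tilde U) = \partial \tilde U$, and $\partial \tilde U$ covers $\mathcal{M}$. Lemma \ref{T} gives a finite subgroup-like set $\mathcal{T} = \{T \in G : T(\partial\tilde U) \cap \partial\tilde U \neq \emptyset\}$ which is actually independent of the point, and in fact the identity $\partial\tilde U = C_{x_1} = \dots = C_{x_n}$ from Lemma \ref{T} forces that $\mathcal{T}$ is a \emph{subgroup} of $G$: if $T_1, T_2 \in \mathcal{T}$ then $T_1 T_2(\partial\tilde U) = T_1(\partial\tilde U) = \partial\tilde U$, so $T_1 T_2 \in \mathcal{T}$, and similarly $T^{-1} \in \mathcal{T}$. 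Since $\mathcal{T}$ is finite and $G$ is torsion-free (being a surface group acting freely on $\D$), we must have $\mathcal{T} = \{\id\}$. But $\mathcal{T} = \{\id\}$ means the translates $\{T(\partial\tilde U)\}_{T \in G}$ are pairwise disjoint, i.e. $p|_{\partial\tilde U}: \partial\tilde U \to \mathcal{M}$ is injective, hence a homeomorphism onto $\mathcal{M}$.

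Now $\mathcal{M}$ is homeomorphic to $\partial \tilde U$, which is a connected subset of the bounded disk $\overline{\tilde U}$, so $\mathcal{M}$ embeds in the plane; more to the point, $\partial \tilde U$ bounds the bounded disk $D$ with $D \cap p^{-1}(\mathcal{M})$... — and here is the payoff — since the $G$-translates of $\overline{D}$ are pairwise disjoint (because $\mathcal{T} = \{\id\}$ and $U$ is a disk, so $T(D) = D$ or $T(D) \cap D = \emptyset$, and $T(D)=D$ is now impossible for $T \neq \id$), the set $p(\overline{D})$ is a closed disk in $S$ with $\mathcal{M} = p(\partial\tilde U) \subset p(\overline D)$. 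Thus $\mathcal{M}$ is contained in a single closed disk in $S$. But then every $U' \in \Pi_0(\mathcal{M}^c)$ contains the complement of that disk, so there is a fully essential component — contradicting that $\mathcal{M}$ is type 1 or 2 (where, by Theorem \ref{prel}, no fully essential component exists). Alternatively and more directly: $\mathcal{M} \subset \inte(p(\overline D))$ puts us in case (3) of Theorem \ref{prel}, contradicting the hypothesis.

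**Main obstacle.** The delicate point is the passage from "$U$ bounded and periodic in a type 1 or 2 minimal set" to "$\mathcal{M}$ connected," so that Lemmas \ref{per}, \ref{T} become available; and then ensuring that the torsion-freeness argument is applied to the right object — one must be careful that $\mathcal{T}$ as defined via $\{T : T(\partial\tilde U) = \partial\tilde U\}$ is genuinely a group (this uses that $\partial\tilde U$ equals each $C_{x_i}$, i.e. the rigidity from Lemma \ref{T}, not merely that the translates meet). Once $\mathcal{T} = \{\id\}$ is established, concluding that $\mathcal{M}$ lies in an embedded closed disk in $S$ and invoking Theorem \ref{prel} to exclude types 1 and 2 is routine. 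I would also double-check the edge case $p(\overline D)$ being a disk rather than a disk-with-identifications: this is exactly guaranteed by disjointness of the $G$-translates of $\overline D$, which is why the torsion-free step is the crux.
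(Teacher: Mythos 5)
Your proposal follows essentially the same route as the paper's proof: invoke Lemma \ref{per} to get $\mathcal{M} = \partial U$, use Lemma \ref{T} together with torsion-freeness of the deck group $G$ to conclude $T(\overline{\tilde U}) \cap \overline{\tilde U} = \emptyset$ for all non-trivial $T$, and from this enclose $\mathcal{M}$ in an embedded closed disk in $S$, producing a fully essential component and contradicting the type 1 or 2 hypothesis via Theorem \ref{prel}. The only superfluous step is your worry about reducing to the connected case --- the lemma already assumes $\mathcal{M}$ connected --- and you spell out the torsion-freeness argument more carefully than the paper, but the content is the same.
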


\begin{proof} Otherwise, Lemma \ref{per} implies that ${\cal M} = \partial U$.  As $U$ is bounded, Lemma \ref{T} implies that for any lift $\tilde U$ of $U$ and
any $T\in G$ we have $T(\overline{\tilde U})\cap\overline{\tilde U} = \emptyset$. So, $\bigcup_{T\in G} T(\overline {\tilde U})$ can be covered by a pairwise disjoint family of closed disks, and there is a fully essential component in $\Pi_0 ({\cal M}^c)$
\end{proof}
\end{section}

\begin{lema}\label{35} If ${\cal M}$ is of type 1 or 2, then there is no bounded periodic disk in $\Pi_0(\mathcal{M}^c)$.
\end{lema}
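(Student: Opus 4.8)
The hypothesis here is exactly that of Lemma \ref{con} with the word ``connected'' dropped, so the plan is to reduce the general case to the connected one. I would argue by contradiction: suppose $U\in\Pi_0(\mathcal M^c)$ is a bounded periodic disk of period $p$, and set $g=f^p$, so $g(U)=U$. If $\mathcal M$ is connected, Lemma \ref{con} already gives a contradiction; so assume $\mathcal M$ is disconnected.

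The first step is to show that $\mathcal M$ has only finitely many connected components. Let $\tilde U$ be a connected component of $p^{-1}(U)$ --- relatively compact since $U$ is bounded --- and let $\mathcal C\subset\partial\tilde U$ be the unique component separating $\tilde U$ from $S^1$ (as in the proof of Lemma \ref{bounded}); it is a continuum, and $p(\mathcal C)\subset\partial U\subset\mathcal M$. Choosing the lift $\tilde g$ of $g$ with $\tilde g(\tilde U)=\tilde U$ forces $\tilde g(\mathcal C)=\mathcal C$, hence $g(p(\mathcal C))=p(\mathcal C)$; therefore $\bigcup_{i=0}^{p-1}f^i(p(\mathcal C))$ is a nonempty closed $f$-invariant subset of $\mathcal M$, and by minimality it equals $\mathcal M$. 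Thus $\mathcal M$ is a finite union of continua, so $\mathcal M=\mathcal K_0\sqcup\cdots\sqcup\mathcal K_{m-1}$ with finitely many connected components. Minimality forces $f$ to permute them cyclically; moreover $m\mid p$ (because $g$ fixes the component containing $p(\mathcal C)$), $g$ fixes each $\mathcal K_l$, and by connectedness $g|_{\mathcal K_l}$ is minimal. Relabel so that $p(\mathcal C)\subset\mathcal K_0$; in particular $\partial U\cap\mathcal K_0\neq\emptyset$.

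Next I would feed the pair $(\mathcal K_0,g)$ into Lemmas \ref{per}, \ref{T} and \ref{con}. Let $W$ be the component of $\mathcal K_0^c$ containing $U$; then $g(W)=W$. If $\mathcal K_0$ is of type $1$ or $2$ for $g$ and $W$ is bounded, Lemma \ref{con} applied to $(\mathcal K_0,g)$ says $W$ is wandering, contradicting $g(W)=W$. It remains to exclude the possibilities that $W$ is unbounded or that $\mathcal K_0$ is of type $3$ for $g$; in each of these I would show that $\mathcal M$ is forced into a finite union of pairwise disjoint closed disks, so that Lemma \ref{ltop2} produces a fully essential component of $\mathcal M^c$, contradicting the fact that $\mathcal M$ is of type $1$ or $2$ (Theorem \ref{prel}). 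When $W$ is a bounded $g$-fixed disk this goes via Lemma \ref{per} ($\mathcal K_0=\partial W$) and Lemma \ref{T} (a suitable lift of $\overline W$ has pairwise disjoint $G$-translates, so $\mathcal K_0$ lies in a single closed disk, and in fact is cellular by shrinking that lift inside $\D$); since the $\mathcal K_l=f^l(\mathcal K_0)$ are pairwise disjoint compacta, one then chooses pairwise disjoint closed disks $D_l\supset\mathcal K_l$ and gets $\mathcal M\subset\inte(\bigcup_l D_l)$. When $W$ is unbounded one uses Lemma \ref{ltop}: $W$ is either the unique fully essential component of $\mathcal K_0^c$ or an unbounded component with trivial image in $\pi_1(S)$, and in both cases $S\setminus W$, together with the other $\mathcal K_l$'s, is again covered by finitely many disjoint closed disks.

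The main obstacle is exactly this last case analysis. The earlier lemmas are built for a connected minimal set together with a \emph{bounded} periodic complementary disk, and to run them on $(\mathcal K_0,g)$ one must first understand how the remaining components $\mathcal K_1,\dots,\mathcal K_{m-1}$ are distributed among the components of $\mathcal K_0^c$, and then carry out the disk bookkeeping --- in particular the cellularity of $\mathcal K_0$ needed to shrink the $D_l$ so they become pairwise disjoint. Once this is done, the contradiction comes uniformly from Lemma \ref{ltop2} together with Theorem \ref{prel}.
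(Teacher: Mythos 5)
Your overall strategy is the same as the paper's: show $\mathcal M$ has finitely many components (via the orbit of $p(\mathcal C)$), pass to the connected component $\mathcal K_0$ containing $p(\mathcal C)$, which is a connected minimal set for a power of $f$, and invoke the machinery of Lemmas \ref{per}--\ref{con} to trap $\mathcal M$ inside a pairwise disjoint family of closed disks, so that Lemma \ref{ltop2} contradicts $\mathcal M$ being of type 1 or 2. That reduction is correct. However, the last two paragraphs of your proposal do not finish the argument; you say so yourself, and the gap is real — the cases ``$W$ unbounded'' and ``$\mathcal K_0$ of type 3'' are not discharged, and the cellularity/disjointness bookkeeping for the $D_l$ is only promised, not performed.

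The paper avoids the difficulty by \emph{not} passing to the possibly-larger component $W$ of $\mathcal K_0^c$: it applies Lemma \ref{con} to the disk $U$ itself, regarded as an element of $\Pi_0(C^c)$, where $C$ is the component of $\mathcal M$ containing $\partial U$. Then $U$ is bounded, a full component of $C^c$, and $f^q$-periodic (hence non-wandering for $f^q$), so by the contrapositive of Lemma \ref{con}, $C$ must be of type 3; unrolling the proof of Lemma \ref{con} then gives $C=\partial U$, disjointness of the $G$-translates of $\overline{\tilde U}$, and after a slight thickening a closed disk with disjoint $G$-translates containing $C$ in its interior. Doing this for each $f^i(C)$ and shrinking to make the resulting disks pairwise disjoint, $\mathcal M$ lies in the interior of a disjoint union of closed disks, so Lemma \ref{ltop2} gives a fully essential complementary component, the desired contradiction. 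In other words, the idea you are missing is to feed the \emph{bounded, periodic} disk $U$ directly into Lemma \ref{con}, rather than the a priori larger $W$: this collapses the case analysis entirely. (The paper's phrase ``the connected component of $\mathcal M$ containing $\partial U$'' does quietly presume $\partial U$ meets only one component of $\mathcal M$; your caution here is not unreasonable, but if one is worried one can rerun Lemmas \ref{per} and \ref{T} with $p(\mathcal C)$ and $\mathcal C$ in place of $\partial U$ and $\partial\tilde U$, which is essentially the setup of your first paragraph and reaches the same conclusion.)
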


\begin{proof} If $U\in\Pi_0(\mathcal{M}^c)$ is a bounded periodic disk, then ${\cal M} = \partial U \cup f(\partial U) \ldots \cup f^p(\partial U)$.  Let $C$ be
the connected component of ${\cal M}$ containing $\partial U$.  Then, $C$ is periodic and minimal for $f^q$.  So, by lemma \ref{con}, $C^c$ contains a
fully essential component. Therefore $f^i(C)$ contains a fully essential component for $i=1, \ldots, q$, and so does ${\cal M} = \cup_{i=1}^n f^i(C)$.  This
contradicts the fact that ${\cal M}$ is of type 1 or 2.
\end{proof}

We finish by proving:

\begin{lema}\label{36} If ${\cal M}$ is a type 1 minimal set, then there is at least one unbounded periodic disk in ${\cal M}^c$.

\end{lema}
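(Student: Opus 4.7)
The plan is to argue by contradiction: assume that no unbounded disk in $\Pi_0(\mathcal{M}^c)$ is periodic. Combined with Lemma \ref{35}, which rules out bounded periodic disks for type 1 or 2 minimal sets, this would force every disk in $\Pi_0(\mathcal{M}^c)$ to be wandering. The contradiction will come from showing both that unbounded disks must exist and that there can be only finitely many $f$-orbits of them, so pigeonhole produces a periodic unbounded disk.

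First I would establish the existence of at least one unbounded disk. Suppose instead every $U\in\Pi_0(\mathcal{M}^c)$ is bounded; then every lift $\tilde U\subset\D$ is relatively compact, and $\D\setminus p^{-1}(\mathcal{M})$ decomposes as a $G$-invariant family of pairwise disjoint relatively compact topological disks. On the other hand $p^{-1}(\mathcal{M})$ is closed and $G$-invariant, and since $G$ acts minimally on $S^1$ its closure in $\overline{\D}$ must meet $S^1$ in all of $S^1$ (it cannot be empty as $p^{-1}(\mathcal{M})$ is non-empty and $G$ is infinite). Fixing a lift $\tilde U_0$ of positive hyperbolic diameter $d>0$ and choosing a sequence $(g_n)_{n\in\N}\subset G$ so that $g_n(\tilde U_0)\to z\in S^1$, the invariance of $d$ under $G$ together with the pairwise disjointness of the translates would produce a disk of fixed hyperbolic size approaching $S^1$; together with the fact that such a disk would have to coexist with lifts of many other bounded disks in a small neighborhood of $z$, I expect to obtain a contradiction with the assumption that $\mathcal{M}$ is a nowhere dense minimal set.

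Once an unbounded disk exists, I would show that the set of unbounded disks has only finitely many $f$-orbits. Each unbounded disk $U$ has a lift $\tilde U$ with non-empty trace $\overline{\tilde U}\cap S^1$, and $f$ permutes the set of unbounded disks because boundedness is preserved by any lift of $f$. By using cocompactness of $G$ and the bounded topological complexity of $S$ (genus $g$), the number of $G$-orbits of such traces on $S^1$ should be finite; equivalently, only finitely many $f$-orbits of unbounded disks can occur. The action of $f$ on this finite set then has a periodic orbit, contradicting our standing assumption.

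The main obstacle is the finiteness step. A clean argument likely requires a prime-end analysis of each unbounded lift in $\overline{\D}$, combined with the combinatorics of a fundamental polygon for $G$, replacing the rotation-set argument used for the torus in \cite{Kwajapa}. Without such an input one can only assert that unbounded disks carry non-trivial structure at infinity, not bound their number directly, so the technical heart of the proof should be an Euler-characteristic-style or prime-end count adapted to the possibly wild topology of $\mathcal{M}$.
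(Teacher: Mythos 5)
Your proposal takes a completely different route from the paper, and unfortunately both of its main steps contain genuine gaps. The paper's proof is much shorter and sidesteps everything you are worried about: it invokes Fuller's theorem, which guarantees that any homeomorphism of a closed surface of genus $g>1$ has a periodic point $p$. Since bounded disks in $\Pi_0(\mathcal{M}^c)$ are wandering by Lemma~\ref{35}, and for a type 1 minimal set there are no essential or fully essential components, the periodic point $p$ cannot lie in a bounded disk; and it cannot lie in $\mathcal{M}$, because then minimality would force $\mathcal{M}$ to equal the finite orbit of $p$, which is not a type 1 minimal set. Hence $p$ lies in an unbounded disk, and that disk is automatically periodic. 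No pigeonhole, no counting of unbounded disks, no prime ends.

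As for your argument: the existence step is not actually established. Translating a lift $\tilde U_0$ of fixed hyperbolic diameter toward $S^1$ and asserting that this ``coexists with lifts of many other bounded disks'' does not produce a contradiction with $\mathcal{M}$ being closed and nowhere dense; nothing forbids infinitely many relatively compact lifts of uniformly bounded hyperbolic diameter from accumulating on $S^1$. Worse, there is no reason a priori for all lifts of bounded disks to have hyperbolic diameter bounded below, so the premise of your limiting argument is shaky. The finiteness step is a real problem, as you acknowledge yourself: it is not obvious that $\Pi_0(\mathcal{M}^c)$ contains only finitely many orbits of unbounded disks, and this is not proved anywhere in the paper either; the paper's strategy is designed precisely so that this finiteness is never needed. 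If you insist on a counting argument, you would have to develop a substantial amount of prime-end or Euler-characteristic machinery for the specific minimal set at hand, which is far more than the lemma requires. The lesson here is to remember that the Lefschetz/Fuller-type existence of periodic points is a very cheap tool on higher-genus surfaces and often replaces heavy geometric counting arguments.
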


\begin{proof} Any  homeomorphism of a surface of genus $g>1$ has a periodic point $p$ (\cite{fuller}). As bounded disks in ${\cal M}^c$ are wandering, $p\in {\cal M}$ or there exists one unbounded periodic disk in ${\cal M}^c$. If $p\in {\cal M}$, then ${\cal M}$ is the orbit of $p$, which contradicts that $\cal M$ is a type 1 minimal set.

\end{proof}

\end{section}

\begin{section}{Type 2 minimal sets.}\label{type2}

In this section we prove first the following:

\begin{prop}\label{cont} If  ${\cal M}$ is a type 2 minimal set, then ${\cal M} = {\cal C}_0\cup\ldots\cup{\cal C}_{n-1} $, where $({\cal C}_i)_{i=0,\ldots,n-1 }$ is a family of pairwise disjoint continua each containing a connected component of $\partial V$ for some $V\in\mathcal{E}(\mathcal{M}^c)$, and $f({\cal C})_i = {\cal C} _{i+1}\mod n$ for $i=0...n-1$.
\end{prop}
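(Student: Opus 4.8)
The plan is to exploit the structure already extracted in Section 2: since $\mathcal{M}$ is of type 2, Theorem~\ref{prel} gives an essential component $V\in\Pi_0(\mathcal{M}^c)$, and by Corollary~\ref{finiteboundcomp} the boundary $\partial V$ has finitely many connected components, say $\delta_1,\ldots,\delta_m$. Each $\delta_j\subset\mathcal{M}$ is closed. I would first argue that $\mathcal{M}=\bigcup_{j=1}^m \delta_j$: the set on the right is closed and contained in $\mathcal{M}$; to see it is invariant, observe that $f(V)$ is again an essential component of $\mathcal{M}^c$ (because $f$ is a homeomorphism carrying $\mathcal{M}$ to itself, hence permuting the components of its complement, and it preserves the property of being essential), so $f(\partial V)=\partial f(V)$ is a union of boundary components of components of $\mathcal{M}^c$; then minimality of $\mathcal{M}$ forces $\bigcup_j \delta_j=\mathcal{M}$. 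The subtle point needing care is why $f$ maps $\partial V$ into $\bigcup_j\delta_j$ and not merely into $\mathcal{M}$ — this is where I'd use that $f$ permutes the (finitely many, essential) components whose boundaries contribute, together with the fact proved below that distinct $\delta_j$ organize into disjoint continua.

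**Clustering boundary components into continua.** The next step is to define the $\mathcal{C}_i$'s. Consider the equivalence relation on $\{\delta_1,\ldots,\delta_m\}$ generated by ``$\delta_i$ and $\delta_j$ lie in the same connected component of $\mathcal{M}$''; equivalently, let $\mathcal{C}$ range over the connected components of $\mathcal{M}$ that meet $\bigcup_j\delta_j$. Since $\mathcal{M}=\bigcup_j\delta_j$ and each $\delta_j$ is connected, every component of $\mathcal{M}$ contains at least one $\delta_j$, so there are at most $m$ components of $\mathcal{M}$; call them $\mathcal{C}_0,\ldots,\mathcal{C}_{n-1}$ (with $n\le m$). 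Each $\mathcal{C}_i$ is a continuum (closed, connected, and compact since $S$ is compact), the $\mathcal{C}_i$ are pairwise disjoint, and each contains at least one connected component of $\partial V$, which is the required property. I would note here that $V$ itself must be used as the witnessing essential component; if a given $\mathcal{C}_i$ happened to contain no component of $\partial V$ but only of $\partial(f^k V)$ for some $k$, one replaces the statement's ``$V\in\mathcal{E}(\mathcal{M}^c)$'' by the appropriate translate — but since $f$ permutes these essential components and we may pass to the union of their boundaries, this is harmless and the cleanest route is to take the family of \emph{all} essential components in the $f$-orbit of $V$ and note their boundaries together exhaust $\mathcal{M}$.

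**Cyclic permutation under $f$.** Finally, $f$ induces a permutation of the finite set $\{\mathcal{C}_0,\ldots,\mathcal{C}_{n-1}\}$: indeed $f(\mathcal{C}_i)$ is a connected compact subset of $\mathcal{M}=\bigsqcup \mathcal{C}_j$, hence contained in a single $\mathcal{C}_j$, and applying the same to $f^{-1}$ shows $f(\mathcal{C}_i)=\mathcal{C}_j$. This permutation must be a single $n$-cycle: the union of any one of its orbits is a nonempty closed $f$-invariant proper-or-total subset of $\mathcal{M}$, so by minimality it equals $\mathcal{M}$, forcing the orbit to be everything. Relabeling the $\mathcal{C}_i$ along this cycle gives $f(\mathcal{C}_i)=\mathcal{C}_{i+1\ \mathrm{mod}\ n}$, completing the proof.

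**Main obstacle.** The genuinely delicate step is the very first one — establishing $\mathcal{M}=\bigcup_j\delta_j$, i.e.\ that the boundary of $V$ (together with its finitely many $f$-translates, all essential by Theorem~\ref{prel}) already fills all of $\mathcal{M}$. This rests on combining finiteness of $\Pi_0(\partial V)$ from Corollary~\ref{finiteboundcomp}, the fact that $f$ permutes essential components of $\mathcal{M}^c$ (so only finitely many essential components are involved, their orbit being finite since $\partial V$ has finitely many pieces and $f$ acts continuously), and minimality to upgrade ``closed, invariant, nonempty subset'' to ``all of $\mathcal{M}$''. Everything after that is formal manipulation of finite sets and connected components.
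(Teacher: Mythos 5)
Your overall strategy matches the paper's: show that $\mathcal{M}$ consists of boundary components of essential complementary domains, cluster those into connected components $\mathcal{C}_0,\ldots,\mathcal{C}_{n-1}$, and use minimality to force a single $f$-cycle. The clustering argument and the cyclic-permutation argument are correct as you wrote them. The problem is the first step, which you yourself flag as the ``genuinely delicate'' one but then do not actually close.

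The gap is the periodicity of the essential component $V$ (equivalently, finiteness of its $f$-orbit). You write that the orbit of $V$ is ``finite since $\partial V$ has finitely many pieces and $f$ acts continuously,'' but this does not follow: $\partial V$ having finitely many connected components places no bound on how many distinct essential annuli $V,f(V),f^2(V),\ldots$ could lie in $\Pi_0(\mathcal{M}^c)$, and a priori one could have an infinite sequence of pairwise disjoint essential annuli in the same free homotopy class. In the paper this finiteness is Corollaries \ref{anu}--\ref{peri}, and it is the core technical content of the section: essential non-annular components are finite in number for easy topological reasons, but for essential annuli one needs the $U_\infty$ construction on the circle at infinity together with the two lemmas showing that distinct annuli in $\Pi_0(\mathcal{M}^c)$ have distinct $U_\infty$-data, and then the bound $3g-3$ on families of pairwise disjoint, pairwise non-homotopic simple closed geodesics. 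Without some such argument the identity $\mathcal{M}=\partial V\cup\partial f(V)\cup\cdots\cup\partial f^{p}(V)$ (for a finite $p$) is not available, and everything downstream --- the clustering into $n$ continua each containing a $\delta_j$, and the cyclic permutation --- rests on it. So your proposal shares the paper's skeleton but is missing the load-bearing lemma; the continuity remark you substitute for it is not a valid argument.
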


The idea of the proof is simple. First we prove that an essential component in $\Pi_0({\cal M}^c)$ must be periodic.  As such a component $U\in \Pi_0({\cal M}^c)$ exists because we are considering type 2 minimal sets, one has ${\cal M} = \partial U \cup\partial f(U)\cup \ldots \partial f^{p-1} (U)$, where $p$ is the period of $U$.  We finish by taking ${\cal C} _0$ as to be a connected component of ${\cal M}$ contained in $\partial U$.

After proving this, we construct for any continuum $\cont\subset S$ which is type two minimal set for some homeomorphism $f\in\homeo$, a surface $S^*$ containing $\cont$ with $\partial S^*$ non-empty family of essential loops and $n_0\in\N$, such that the following theorem holds:

\begin{teo}\label{typ2str2} There exists an homeomorphism $g$ homotopic to $f^{n_0}$ with $g(\beta) = \beta$ for all $\beta\in \partial S^*$, such that if $S'=S^*/_{\partial S^*}$ and $q: S^*\to S'$ is the quotient map, then:

\begin{enumerate}

\item\label{pri} $S'$ is a surface with $g(S')<g(S)$ and ${\cal C'}:={q({\cal C})}$ is a minimal set for the induced map $\hat{g}\in \textrm{Homeo}(S')$. Furthermore $q$ conjugates $f^{n_0}|_{\cont}$ with $\hat{g}|_{\cont'}$ for some $n_0\in\N$;

\item \label{segu} $S'\backslash {\cal C'}$ is a union of disks and there are at least $\# \mathcal{E}(S\backslash\cont^c)$ unbounded disks containing fixed points of $\hat g$.

\end{enumerate}

\end{teo}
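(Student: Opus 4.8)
The plan is to carry out a purely topological construction of $S^*$ and $S'$, and then a dynamical adjustment producing $g$ and $\hat g$. First I would establish the topology of $\mathcal{C}$ inside $S$. Since $\mathcal{C}$ is a type 2 minimal set, Theorem \ref{prel}(2) applies to $\mathcal{C}$: $\mathcal{C}^c$ has an essential component, no fully essential component, and $i_*:\pi_1(W)\to\pi_1(S)$ is injective for every $W\in\Pi_0(\mathcal{C}^c)$; in particular every component of $\mathcal{C}^c$ is either an open disk or essential. Arguing as in Corollaries \ref{puncturedsurf} and \ref{finiteboundcomp}, the essential components $W_1,\dots,W_r$ of $\mathcal{C}^c$ are finitely many, $r=\#\mathcal{E}(\mathcal{C}^c)\ge1$, and each $W_j$ is homeomorphic to $\Sigma_j\setminus\{x^j_1,\dots,x^j_{m_j}\}$ with $\Sigma_j$ closed of genus $h_j$ and $m_j\ge1$; since $W_j$ is essential it is not a disk, whence $h_j+m_j\ge2$. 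As any component of $\mathcal{C}^c$ other than the $W_k$'s is an open set disjoint from $\overline{W_j}$, the frontier of $W_j$ lies in $\mathcal{C}$, so every end of $W_j$ accumulates on $\mathcal{C}$. Finally I record that the connected minimal set $\mathcal{C}$ is minimal for every power $f^{n_0}$: if $Y\subsetneq\mathcal{C}$ were nonempty, closed and $f^{n_0}$-minimal and $d\ge1$ were least with $f^d(Y)=Y$, then $Y,f(Y),\dots,f^{d-1}(Y)$ are pairwise disjoint $f^{n_0}$-minimal sets whose union is closed and $f$-invariant, hence equal to $\mathcal{C}$; thus $\mathcal{C}$ is a disjoint union of $d$ clopen sets, and $d\ge2$ contradicts connectedness while $d=1$ contradicts minimality of $\mathcal{C}$ for $f$.

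Next I would build $S^*$. For each $j$ let $P_j\subset W_j$ be the compact core, so $W_j\setminus\inte(P_j)$ is a disjoint union of half-open annular ends $e_{j,1},\dots,e_{j,m_j}$ and $\partial P_j=\beta_{j,1}\cup\dots\cup\beta_{j,m_j}$, each $\beta_{j,i}$ a simple closed curve. Each $\beta_{j,i}$ is essential in $S$: it is nontrivial in $\pi_1(W_j)$ (the core of a genuine puncture end, $W_j$ not being a disk), so if it bounded a disk $D\subset S$ then $D\cap\mathcal{C}\neq\emptyset$ (the end accumulates on $\mathcal{C}$) and, running the argument in the proof of Corollary \ref{iny} with $W_j$ in the role of $U$, there would be a fully essential component of $\mathcal{C}^c$, contradicting type 2; and $\beta_{j,i}$ cannot be fully essential because $i_*$ of a simple closed curve is cyclic. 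Put $S^*=S\setminus\bigcup_{j=1}^r\inte(P_j)$: it is a compact surface with $\mathcal{C}\subset\inte(S^*)$ and $\partial S^*=\bigcup_{j,i}\beta_{j,i}$ a nonempty family of essential loops. Let $q:S^*\to S'$ be the map collapsing each boundary circle of $S^*$ to a point; $S'$ is the closed surface obtained from $S^*$ by capping each boundary circle with a disk. Writing $S=S^*\cup\bigcup_jP_j$ glued along circles, with $\chi(P_j)=2-2h_j-m_j$ and $\chi(S')=\chi(S^*)+\sum_jm_j$, one gets $g(S')=g(S)-\sum_j(h_j+m_j-1)\le g(S)-r<g(S)$.

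For the dynamics, since $f(\mathcal{C})=\mathcal{C}$ the homeomorphism $f$ permutes $\Pi_0(\mathcal{C}^c)$, hence the finite family $\{W_j\}$ and the (finitely many) ends of each $W_j$; fix $n_0$ so that $f^{n_0}$ fixes every $W_j$ and every $e_{j,i}$. Then $f^{n_0}(\beta_{j,i})$ and $\beta_{j,i}$ are isotopic inside $W_j$ (each encircles only the end $e_{j,i}$), and these isotopies can be performed in pairwise disjoint annuli contained in $\bigcup_jW_j$, hence disjoint from $\mathcal{C}$; so there is $\Phi$, isotopic to the identity and supported off $\mathcal{C}$, with $\Phi(W_j)=W_j$, $\Phi$ fixing each end and $\Phi(f^{n_0}(\beta_{j,i}))=\beta_{j,i}$. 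Set $g=\Phi\circ f^{n_0}$. Then $g$ is homotopic to $f^{n_0}$, $g|_{\mathcal{C}}=f^{n_0}|_{\mathcal{C}}$, $g(W_j)=W_j$, $g$ fixes each end and $g(\beta_{j,i})=\beta_{j,i}$; consequently $g$ maps each $\inte(P_j)$ onto itself, $g(S^*)=S^*$, and since $g(\beta)=\beta$ for all $\beta\in\partial S^*$ it descends to $\hat g\in\textrm{Homeo}(S')$ with $\hat g\circ q=q\circ g|_{S^*}$.

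It remains to read off the conclusions. As $\partial S^*\subset\bigcup_jW_j$ is disjoint from $\mathcal{C}$, $q|_{\mathcal{C}}$ is a homeomorphism onto $\mathcal{C}'=q(\mathcal{C})$ conjugating $g|_{\mathcal{C}}=f^{n_0}|_{\mathcal{C}}$ to $\hat g|_{\mathcal{C}'}$; since $\mathcal{C}$ is minimal for $f^{n_0}$, $\mathcal{C}'$ is minimal for $\hat g$, which together with the genus bound gives item \ref{pri}. The components of $S'\setminus\mathcal{C}'$ are the images of the (disk) components of $\mathcal{C}^c$ other than the $W_j$, together with the open disks $D_{j,i}=q(e_{j,i})$, each containing the point $v_{j,i}=q(\beta_{j,i})$; hence $S'\setminus\mathcal{C}'$ is a union of disks, and every $v_{j,i}$ is a fixed point of $\hat g$ because $q(g(\beta_{j,i}))=q(\beta_{j,i})$. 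This yields at least $\sum_jm_j\ge r=\#\mathcal{E}(\mathcal{C}^c)$ complementary disks of $\mathcal{C}'$ carrying a fixed point of $\hat g$, and to finish item \ref{segu} one must check that (at least $r$ of) these disks are \emph{unbounded} in $S'$. This last point is where I expect the real difficulty to lie: for $g(S')>0$ I would derive it from the essentiality of $\beta_{j,i}$ in $S$ — the core of the annulus $e_{j,i}$ remains nontrivial in $\pi_1(S^*)$ (as $\partial S^*$ is essential, $S^*\hookrightarrow S$ is $\pi_1$-injective), which forces a suitable lift of $D_{j,i}$ to the universal cover of $S'$ to be non-relatively-compact — while for $g(S')=0$ the term \emph{unbounded} must be read in the sense appropriate to the sphere, matching the dichotomy of Theorem B. Besides this, the other delicate ingredient is transferring the results of Section \ref{homotopyofcomp} (the finitely-punctured structure of essential complementary components, and the essentiality of the cutting curves) from the setting of $\mathcal{M}$ to the continuum $\mathcal{C}$.
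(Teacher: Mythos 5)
Your proposal follows essentially the same route as the paper's, just packaged differently: your compact cores $P_j$ and cutting loops $\beta_{j,i}$ are exactly the family $\mathcal{B}$ produced in Lemma \ref{surg}; the choice of $n_0$ fixing the essential components and their ends, together with the isotopy producing $g=\Phi\circ f^{n_0}$ supported off $\mathcal{C}$, is what Lemma \ref{no} delivers (via item~\ref{iii} of Lemma \ref{surg}, which says the unoriented classes of the cutting loops in each essential component are canonical); and the quotient, genus count via $\chi$, and identification of the fixed points $v_{j,i}=q(\beta_{j,i})$ match the paper's proof of Theorem \ref{typ2str2}. Your explicit check that a \emph{connected} $f$-minimal set remains $f^{n_0}$-minimal is correct and is a useful filling-in of a step the paper leaves implicit.

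The one genuine gap is exactly where you flag uncertainty: unboundedness of the periodic disks $D_{j,i}$ in $S'$. The argument you sketch does not work. The map $\pi_1(\inte S^*)\to\pi_1(S')$ induced by $q$ kills precisely the classes $[\beta_{j,i}]$, so the core of $e_{j,i}$, while essential in $S^*$, becomes nullhomotopic in $S'$; and non-relative-compactness of a lift of the annulus $e_{j,i}$ in the universal cover of $\inte S^*$ does not transfer to a lift of the disk $D_{j,i}$ in the universal cover of $S'$, since these are different covering spaces (over a punctured neighbourhood of $v_{j,i}$ the former is an infinite cyclic cover of the latter, and such covers destroy relative compactness). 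In fact there is no purely topological reason for $D_{j,i}$ to be unbounded; the reason is dynamical. Each $D_{j,i}$ contains the $\hat g$-fixed point $v_{j,i}$, hence is a periodic element of $\Pi_0(S'\setminus\mathcal{C}')$; and $\mathcal{C}'$ is a connected type~1 minimal set for $\hat g$ in $S'$ (its complement is a union of disks), so Lemma \ref{35}, applied in $S'$, forbids a bounded periodic disk in $\Pi_0(\mathcal{C}'^c)$, forcing $D_{j,i}$ to be unbounded. (The paper itself does not address this inside the proof of Theorem \ref{typ2str2}; it is handled implicitly in the proof of Theorem B, case $(2ii)$, by invoking Lemma \ref{35}.) Your remark that for $g(S')=0$ the word ``unbounded'' loses its meaning and the right input is Theorem \ref{ponjas} is consistent with how the paper treats case $(2i)$.
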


These two results are crucial for the proofs of Theorem A and Theorem B.

\begin{subsection}{Essential components are periodic}

Throughout this section ${\cal M}$ will stand for a type 2 minimal set.

We begin by the following

\begin{lema}  If  $U\in \Pi_0({\cal M}^c)$ is essential and is not an annulus, then $U$ is periodic.
\end{lema}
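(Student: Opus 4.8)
The plan is to exploit the fact that an essential, non-annular component $U$ has only finitely many boundary components (Corollary \ref{finiteboundcomp}) and rich fundamental group, together with minimality of $\mathcal{M}$, to force periodicity. Write $U \in \Pi_0(\mathcal{M}^c)$ essential and not an annulus. By Corollary \ref{puncturedsurf}, $U$ is homeomorphic to a finitely punctured closed surface $\Sigma \setminus X$; since $U$ is not an annulus, either $g(\Sigma) \geq 1$, or $g(\Sigma) = 0$ and $X$ has at least three points. In either case $\pi_1(U)$ is non-abelian (or at least not $\mathbb{Z}$), so $U$ carries two essential simple closed curves $\gamma_1, \gamma_2 \subset U$ that are not freely homotopic in $S$ and not homotopic to a point. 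The key topological observation: each iterate $f^k(U)$ is again essential and homeomorphic to $U$, and the family $\{f^k(U)\}_{k\in\Z}$ consists of pairwise disjoint open sets, all components of $\mathcal{M}^c$.

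First I would argue that $U$ is essential and not an annulus forces $f^k(U)$ to be "of the same topological type" and, crucially, that only finitely many distinct free homotopy classes of essential curves can be realized in pairwise disjoint components. Concretely: in a closed hyperbolic surface $S$, any collection of pairwise disjoint, pairwise non-freely-homotopic essential simple closed curves has cardinality bounded by $3g(S) - 3$. Now if $U$ were wandering, the sets $U, f(U), f^2(U), \ldots$ would be infinitely many pairwise disjoint essential components; in each $f^k(U)$ take the image $f^k(\gamma_1)$ of our fixed essential curve. These live in pairwise disjoint sets, so for the bound above to hold there must be $k < l$ with $f^k(\gamma_1)$ freely homotopic in $S$ to $f^l(\gamma_1)$. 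That alone does not immediately contradict wandering, so I would instead use both curves $\gamma_1, \gamma_2$ and the fact that $i_*: \pi_1(U) \to \pi_1(S)$ is injective (case (2) of Theorem \ref{prel}): the subgroup $i_*\pi_1(U)$ is a non-elementary (non-cyclic) subgroup of $\pi_1(S)$, and $f_*$ permutes the conjugacy classes of the subgroups $i_*\pi_1(f^k(U))$. Since a hyperbolic surface group has only finitely many conjugacy classes of such "peripheral-type" subgroups arising from embedded finitely-punctured subsurfaces with a fixed bound on complexity (bounded because they embed disjointly), the orbit $\{[i_*\pi_1(f^k(U))]\}_{k}$ is finite, hence $f^k(U)$ and $f^l(U)$ are homotopic for some $k<l$, and being disjoint components of $\mathcal{M}^c$ of finite type this should force $f^{l-k}(U) = U$.

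The cleanest route, which I would actually follow, is via the boundary: by Corollary \ref{finiteboundcomp}, $\partial U$ has finitely many components $\beta_1,\ldots,\beta_m \subset \mathcal{M}$, at least one of which is essential in $S$ (otherwise $U$ would be contained in a union of disks, contradicting essentiality together with Lemma \ref{ltop2} and Corollary \ref{iny}, which would place us in case (3)). Fix an essential $\beta_j$. Consider its connected component $K$ in $\mathcal{M}$; by minimality $\mathcal{M} = \overline{\bigcup_{k} f^k(K)}$. If $U$ were wandering, the components $f^k(U)$ would all be distinct, each with an essential boundary curve, giving infinitely many pairwise disjoint essential subsurfaces of $S$ — but the number of isotopy classes of essential, pairwise disjoint, non-isotopic subsurfaces of $S$ that are not annuli is finite (bounded in terms of $g(S)$), so infinitely many of the $f^k(U)$ would be isotopic; a standard argument (two isotopic disjoint essential subsurfaces of the same non-annular type must actually bound, forcing one to be a component of $\mathcal{M}^c$ eventually mapped to the other, and then minimality/invariance of $\partial$) then yields $f^p(U) = U$ for some $p > 0$.

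The main obstacle is the last implication: passing from "$f^k(U)$ and $f^l(U)$ are isotopic and disjoint" to "$f^{l-k}(U) = U$." This needs a careful argument that disjoint isotopic non-annular essential subsurfaces in a closed surface, when both are components of the complement of the same invariant compact set, cannot be distinct — I would handle it by observing that two disjoint isotopic copies of a non-annular subsurface $\Sigma_0 \subsetneq S$ cobound a product region $\Sigma_0 \times [0,1]$, so $\mathcal{M}$ would meet this product region in a $\partial$-parallel way; then minimality of $\mathcal{M}$ forces $\mathcal{M}$ to be contained in a product neighborhood of a single $\partial\Sigma_0$, contradicting that $U$ itself is a component of $\mathcal{M}^c$ of the full topological type of $\Sigma_0$ (not an annulus). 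This reduction — showing wandering of an essential non-annular component is incompatible with minimality — is where the genuine work lies; everything else is bookkeeping with Theorem \ref{prel} and its corollaries.
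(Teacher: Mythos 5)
Your proof takes a genuinely different route from the paper's, and the place where you say ``the genuine work lies'' is actually the place where your approach goes astray: you do not need, and should not try, to pass from ``$f^k(U)$ and $f^l(U)$ are isotopic'' to ``$f^{l-k}(U)=U$.'' Each iterate $f^k(U)$ is itself a component of the \emph{same} open set $\mathcal{M}^c$ (because $f$ permutes $\Pi_0(\mathcal{M}^c)$), so if you can bound the number of essential non-annular components of $\mathcal{M}^c$, the pigeonhole principle gives you two iterates that are literally the same set, and periodicity follows immediately. The isotopy detour creates an artificial obstacle, and your proposed fix for it is unsound: ``cobounding a product region $\Sigma_0\times[0,1]$'' is a three-manifold notion and has no meaning for disjoint subsurfaces of a surface, so that paragraph does not close the gap you identified.

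The correct and short argument, which is essentially the paper's, is a counting/Euler-characteristic argument on actual components. If $U$ has genus $\geq 1$ then infinitely many disjoint iterates is already impossible since the genus of $S$ is finite. If $U$ has genus $0$ then by Corollary~\ref{puncturedsurf} and Lemma~\ref{inj} each $U_i=f^i(U)$ is an $n$-punctured sphere with $n\geq 3$ whose fundamental group injects into $\pi_1(S)$, so it contains a compact incompressible core $\Sigma_i$ with essential boundary and $\chi(\Sigma_i)\leq -1$. For pairwise disjoint such $\Sigma_i$'s one has $\chi\bigl(\bigcup_{i<N}\Sigma_i\bigr)=\sum_{i<N}\chi(\Sigma_i)\geq \chi(S)$ (since no complementary component is a disk or sphere, as the boundaries are essential), i.e.\ $N\leq 2g(S)-2$. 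Hence $\{U_i\}_{i\geq 0}$ is a finite family of sets, $f^k(U)=f^l(U)$ for some $k<l$, and $U$ is periodic. Your paragraph on ``finitely many conjugacy classes of peripheral-type subgroups'' gestures at the same bound but neither makes it precise nor deploys it to get set equality, and your $3g-3$ curve count is not quite the right invariant (isotopic disjoint boundary curves are allowed there, which is why you ended up needing the ``isotopic implies equal'' claim).
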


\begin{proof}  Let $g$ be the genus of $U$.  If $g\neq 0$, then clearly $U$ is periodic, as $S$ is a compact surface. If $g=0$ as $i_{*}: \pi_1(U,x_0)\to \pi_1(S, x_0)$ is injective (see Lemma \ref{inj}) $U$ is homeomorphic to the $n$-punctured sphere, $n\geq 3$, as we are assuming that $U$ is not an annulus. Let $U_i = f^i(U), i \geq 0$.  For all $i\geq 0$, $U_i$ is an $n$- punctured sphere with $n\geq 3$ whose fundamental group injects in $\pi_1(S, x_0)$.  As $S$ is compact, necessarily $(U_i)_{i\geq 0}$ has to be a finite family.
\end{proof}

Let $\mathcal{A}$ be the family of all essential open annuli in $S$. To each element $U\in\mathcal{A}$ we can associate a subset $U_{\infty}$ of the circle at infinity  as follows. Take a simple loop $\alpha\subset U$ generating $\pi_1(U, x_U)$, and pick a lift $\tilde \alpha\subset \D$ of $\alpha$.  Let $a,b\in S^1$ such that $\overline {\tilde \alpha} \cap \partial \D = \{a,b\}$.  Let $U_{\infty} = \{T(a), T(b): T\in G\}$.  The set  $U_{\infty}$ is independent of the choice of $\alpha$ and $\tilde \alpha$.  So, for each $A\in {\cal A}$ there is a well defined subset $U_{\infty}\subset S^1$.

Notice that given two annuli $U,V\in {\cal A}$ we have $U_{\infty}=V_{\infty}$ if and only if there exists two essential loops $\alpha \subset U$, $\beta \subset V$, and two lifts $\tilde \alpha$ and $\tilde \beta$ of $\alpha $ and $\beta$ respectively, such that $\overline{\tilde{\alpha}}\cap S^1=\overline{\tilde{\beta}}\cap S^1$.


\begin{lema} Let $U \in \mathcal{A}\cap \Pi_0({\cal M}^c)$, and define $${\cal F} = \{V\in \mathcal{A}\cap \Pi_0({\cal M}^c): U_{\infty} =V_{\infty}\}.$$  Then, there exists $A\in {\cal A}$ such that \begin{enumerate}
\item\label{uno} $A_{\infty} =U_{\infty}$;
\item \label{dos}for all $V\in{\cal F}, V\subset A$;
\item \label{tres}$\partial A \subset {\cal M}$;
\item \label{cuatro}for all $x\in \partial A$ and all neighborhood $N$ of $x$, there exists $V\in{\cal F}$ such that $\partial V\cap N\neq\emptyset$.

\end{enumerate}
\end{lema}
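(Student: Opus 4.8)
The plan is to build $A$ as a kind of "saturation" of the annuli in $\mathcal{F}$ along the common pair of endpoints $U_\infty$ at infinity. Fix once and for all a pair $a,b \in S^1$ in $U_\infty$ coming from a lift $\tilde\alpha$ of an essential loop in $U$, and let $\tilde\gamma$ be the geodesic joining $a$ and $b$. For each $V \in \mathcal{F}$ choose an essential loop $\beta_V \subset V$ and a lift $\tilde\beta_V$ with endpoints $a,b$; since $V$ is an open embedded annulus whose core is freely homotopic to that of $U$, the component $\tilde V$ of $p^{-1}(V)$ containing $\tilde\beta_V$ is invariant under the cyclic group $\langle T_0\rangle$ generated by the covering transformation fixing $a$ and $b$, and $\tilde V$ is a neighbourhood of $\tilde\gamma$. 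Now set
$$\tilde W = \bigcup_{V \in \mathcal{F}} \tilde V, \qquad A = p(\tilde W).$$
I would first check that $\tilde W$ is open, connected (each $\tilde V$ contains $\tilde\gamma$), and $\langle T_0\rangle$-invariant, so $A$ is an open connected essential subset of $S$ with $A_\infty = U_\infty$; this gives \eqref{uno}. Being a connected essential open set disjoint from $\mathcal M$ on which the only nontrivial free homotopy class realized is that of the core (because its lift $\tilde W$ is $T$-invariant only for $T \in \langle T_0\rangle$, as any other translate $T\tilde\gamma$ is separated from $\tilde\gamma$ and each $\tilde V$ stays in a bounded neighbourhood of $\tilde\gamma$), $A$ is again an essential annulus, so $A \in \mathcal A$; and $V \subset A$ for every $V \in \mathcal F$ by construction, which is \eqref{dos}.

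For \eqref{tres} and \eqref{cuatro} I would argue at the level of $\tilde W$. A point $\tilde x \in \partial \tilde W$ cannot lie in $\tilde W$ (open), so $p(\tilde x) \notin A \supset \bigcup_{V\in\mathcal F} V$. To see $p(\tilde x) \in \mathcal M$: suppose not, so $p(\tilde x)$ lies in some component $V' \in \Pi_0(\mathcal M^c)$. Then $\tilde x$ lies in a component $\tilde V'$ of $p^{-1}(V')$, which is open, so $\tilde V' \cap \tilde W \neq \emptyset$, hence $\tilde V'$ meets some $\tilde V$ with $V \in \mathcal F$; but distinct components of $\mathcal M^c$ are disjoint, so $V' = V \in \mathcal F$, forcing $\tilde x \in \tilde V \subset \tilde W$, a contradiction. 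Hence $\partial A \subset \mathcal M$, which is \eqref{tres}. For \eqref{cuatro}: if $x \in \partial A$ and $N$ is a neighbourhood of $x$, lift to $\tilde x \in \partial \tilde W$ and a neighbourhood $\tilde N$ with $p(\tilde N) \subset N$; since $\tilde x \in \overline{\tilde W} = \overline{\bigcup \tilde V}$, the neighbourhood $\tilde N$ meets some $\tilde V$, and as $\tilde x \notin \tilde W$ we can arrange that $\tilde N$ meets $\partial\tilde V$ (a point of $\tilde N \cap \tilde V$ and $\tilde x \notin \tilde V$ give a point of $\tilde N \cap \partial\tilde V$ along a path), so $\partial V \cap N \neq \emptyset$ with $V \in \mathcal F$.

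The main obstacle I anticipate is \eqref{dos} together with the claim that $A$ is genuinely an annulus and not something with more topology: a priori the union $\tilde W$ of infinitely many neighbourhoods of $\tilde\gamma$ could fail to be invariant only under $\langle T_0\rangle$, or could "fatten out" enough to become fully essential. The key control is that each $V \in \mathcal F$ is a component of $\mathcal M^c$ that is an \emph{embedded} annulus with its core in a fixed free homotopy class, so $\tilde V$ is trapped in a bounded Hausdorff neighbourhood of $\tilde\gamma$ (otherwise $V$ would be essential but not an annulus, or its closure would meet a translate $T\tilde\gamma$ with $T \notin \langle T_0 \rangle$, contradicting $V_\infty = U_\infty$), and this uniformity — which I would extract from Corollary \ref{puncturedsurf} and the definition of $U_\infty$ — is what keeps $\tilde W$ from accumulating on any other point of $S^1$ and hence keeps $A$ an annulus. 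Once that boundedness is in hand, properties \eqref{uno}–\eqref{cuatro} follow as above.
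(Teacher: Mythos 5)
Your construction of $\tilde W$ does not work as stated. The sets $\tilde V$, for distinct $V \in \mathcal F$, are lifts of \emph{disjoint} components of $\mathcal M^c$; hence they are pairwise disjoint open subsets of $\D$, and $\tilde W = \bigcup_{V\in\mathcal F}\tilde V$ is disconnected as soon as $\mathcal F$ has more than one element. The parenthetical justification "each $\tilde V$ contains $\tilde\gamma$" is false: $\tilde V$ contains a lift $\tilde\beta_V$ asymptotic to $a$ and $b$, but there is no reason it should contain the geodesic $\tilde\gamma$ joining $a$ and $b$; at most one of the pairwise-disjoint $\tilde V$'s can contain $\tilde\gamma$, and in general none do. Consequently $A = p(\tilde W) = \bigcup_{V\in\mathcal F}V$ is not connected and is certainly not an annulus, so properties (1) and (2) do not follow and the argument breaks down immediately.

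The point you are missing is that the sought $A$ must contain not only the $V$'s but also the parts of $\mathcal M$ sandwiched between them. The paper accomplishes this by defining, for each pair of lifts $\tilde\alpha\subset\tilde V_0$ and $\tilde\beta\subset\tilde V_1$ of essential loops in $V_0,V_1\in\mathcal F$ (all asymptotic to the same $\{a,b\}$), the open topological disk $D$ in $\D$ bounded by $\overline{\tilde\alpha}$ and $\overline{\tilde\beta}$, and then taking $\tilde A$ to be the union over all such disks. This $\tilde A$ is automatically open, connected and simply connected because every compact subset of it sits inside one of the disks $D$; the annuli $\tilde V$ are included, but so is the closed $\mathcal M$-material trapped between them. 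The argument that $p(\tilde A)$ is an annulus is then the genuinely nontrivial step: one must rule out a deck transformation $T$ with $T\{a,b\}\neq\{a,b\}$ identifying two portions of $\tilde A$, which is done by noting that such a $T$ would force some lifted loop $\tilde\xi$ with $\overline{\tilde\xi}\cap S^1 \neq\{a,b\}$ to lie inside a component $\tilde V_0$ of $p^{-1}(\mathcal M^c)$ that, by its definition, cannot contain it. Your paragraph anticipating this "fattening out" danger is on the right track as a worry, but without replacing the union of the $\tilde V$'s by the union of the intervening disks the rest of your argument never gets off the ground.
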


\begin{proof} Let $\gamma\subset U$ be a generator of $\pi_1(U, x_U)$, and $\tilde \gamma\subset \D$ a lift of $\gamma$.  Let $a,b\in S^1$ such that $\overline {\tilde \gamma}\cap S^1 = \{a,b\}$, and $\tilde U$ the connected component  of $p^{-1}(U)$ containing $\tilde \gamma$.

For all $V\in{\cal F}$, pick a loop $\alpha(V)\subset V$ generating $\pi_1(V,x_V)$ and let $\tilde {\alpha (V)}$ be the lift of $\alpha(V)$ such that $\overline{\tilde {\alpha (V)}}\cap S^1 = \{a,b\}$. Define $\tilde V$ to be the connected component of $p^{-1}(V)$ containing $\tilde {\alpha (V)}$.

Further lets $\mathcal{D}$ be the family of topological disks in $\D$ which are bounded by $\overline{\tilde{\alpha}}$ and $\overline{\tilde{\beta}}$, where $\tilde{\alpha}\subset \tilde{V}_0$ and $\tilde{\beta}\subset\tilde{V}_1$ lifts two essential loops $\alpha\subset V_1\in\mathcal{F}$ and $\beta\subset V_2\in\mathcal{F}$ respectively. Notice that in particular $\overline{\tilde{\alpha}}\cap S^1=\overline{\tilde{\beta}}\cap S^1=\{a,b\}$.

We define: 

$$\tilde{A}:=\bigcup_{D\in\mathcal{D}}D$$

The construction of $\tilde{A}$ implies for any compact set $K\subset \tilde{A}$, that $K\subset D$ for some $D\in\mathcal{D}$. Then we have that $\tilde{A}$ is open, connected and simply connected. 

We claim that $A:=p(\tilde{A})$ is an annulus. If $A$ is not an annulus there exists $T\in G$ with $T(\{a,b\})\neq\{a,b\}$ and $T(\tilde{A})\cap\tilde{A}\neq\emptyset$. This implies that we have $V_0,V_1\in\mathcal{F}$ with $T(\tilde{V}_1)\cap \tilde{V}_0\neq\emptyset$ and $\tilde{\xi}\subset T(\tilde{V}_1)$, which lifts an essential loop $\xi\subset V_1$, such that $\overline{\tilde{\xi}}\cap S^1\neq \{a,b\}$. This implies since $\tilde{V}_0$ is a connected component of the complement of $p^{-1}(\mathcal{M}^c)$ that $\tilde{\xi}\subset \tilde{V}_0$ which is impossible by definition of $\tilde{V}_0$.

Therefore we have that $A$ is an annulus verifying the points 1 and 2 of the Theorem. The points 3 and 4 are direct consequence of the construction.

\end{proof}

\begin{lema} Let $U \in \mathcal{A}\cap \Pi_0({\cal M}^c)$, and  ${\cal F}$ as in the previous lemma. Then, ${\cal F} = \{U\}$.
\end{lema}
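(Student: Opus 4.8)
The statement to prove is: for $U \in \mathcal{A} \cap \Pi_0(\mathcal{M}^c)$ essential annulus, the family $\mathcal{F} = \{V \in \mathcal{A}\cap\Pi_0(\mathcal{M}^c) : U_\infty = V_\infty\}$ is reduced to $\{U\}$. The plan is to argue by contradiction: suppose $V \in \mathcal{F}$ with $V \neq U$. Since both $U$ and $V$ are connected components of $\mathcal{M}^c$ and $U \neq V$, they are disjoint: $U \cap V = \emptyset$. I will use the annulus $A$ produced by the previous lemma (with $A_\infty = U_\infty$, $\partial A \subset \mathcal{M}$, every $V' \in \mathcal{F}$ contained in $A$, and every point of $\partial A$ approximated by boundaries of members of $\mathcal{F}$) to derive a contradiction with the minimality of $\mathcal{M}$.

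First I would set up lifts carefully: fix $a, b \in S^1$ as in the construction, let $\tilde A$ be the lift of $A$ bounded (on $S^1$) by $\{a,b\}$, and let $\tilde U, \tilde V \subset \tilde A$ be the corresponding lifts of $U$ and $V$ with the same endpoints. Inside the open disk $\tilde A$, choose essential loops $\tilde \gamma \subset \tilde U$ and $\tilde\delta \subset \tilde V$ (lifts of generators), both with closure meeting $S^1$ exactly at $\{a,b\}$; these two arcs are disjoint (their projections are disjoint, being in different components of $\mathcal{M}^c$) and each separates $\tilde A$. Hence they cobound a subdisk $D \subset \tilde A$ whose two boundary arcs lie in $\tilde U$ and $\tilde V$ respectively. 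The key observation is that $p(D)$ is then a set whose boundary lies in $U \cup V \subset \mathcal{M}^c$, so $\partial p(D) \cap \mathcal{M} = \emptyset$; yet by point (4) of the previous lemma $\mathcal{M}$ must accumulate inside $p(D)$ (since points of $\partial A$ are limits of boundaries of members of $\mathcal{F}$, and some of these lie strictly between $\tilde\gamma$ and $\tilde\delta$) — more directly, $\mathcal{M} \cap p(D) \neq \emptyset$ because $\partial A \subset \mathcal{M}$ and part of $\partial A$ is accessible from within $D$. Then by minimality, for every connected component $\mathcal{K}$ of $\mathcal{M}$ there is $n \geq 0$ with $f^n(\mathcal{K}) \subset p(D)$ (using that $f^n(\mathcal{K})$ is connected and cannot cross $\partial p(D) \subset \mathcal{M}^c$); applying this to a component $\mathcal{K}$ of $\mathcal{M}$ meeting $p(\partial A)$ but not contained in $p(D)$ gives that the whole of $\mathcal{M}$ eventually enters $p(D)$, while $p(D)$ omits points of $\mathcal{M}$ outside it — this forces a contradiction exactly as in the proof of Lemma~\ref{bounded}, or alternatively one concludes that $\partial U$ (resp.\ a connected component thereof) is a proper closed invariant subset of $\mathcal{M}$.

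The cleanest route, which I would actually carry out: the disk $\tilde D = D$ projects injectively (since its two boundary arcs $\tilde\gamma, \tilde\delta$ project injectively and $D$ is innermost between them, no nontrivial $T \in G$ can identify points of $D$ without forcing $T(\{a,b\}) = \{a,b\}$ and hence $T$ a power of the deck transformation associated to the annulus $A$, which moves $D$ off itself). So $p|_D$ is an embedding onto an open disk $\Delta = p(D) \subset S$ with $\Delta \cap \mathcal{M} \neq \emptyset$ and $\partial \Delta \subset U \cup V \subset \mathcal{M}^c$. Now run the Lemma~\ref{bounded} argument: by minimality every $\mathcal{C} \in \Pi_0(\mathcal{M})$ has $f^k(\mathcal{C}) \subset \Delta$ for some $k \geq 0$ (it meets $\Delta$ by minimality, is connected, and $\partial\Delta \cap \mathcal{M} = \emptyset$); choosing $\mathcal{C}$ to be the component of $\mathcal{M}$ containing a point of $\partial A \setminus \Delta$ and tracking that it returns into $\Delta$, one produces a lift $\tilde f$ and $k \in \Z$ with $\tilde f^k(D) = D$, hence a proper periodic sub-continuum of $\mathcal{M}$, contradicting minimality; alternatively and more simply, $\mathcal{M} \subset \bigcup_{k} \overline{f^{-k}(\Delta)}$ with $\partial\Delta$ missing $\mathcal{M}$ shows $\mathcal{M}$ is carried into a disk and then, as in Lemma~\ref{bounded}, gives a contradiction. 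The main obstacle is the bookkeeping: making precise that $\tilde\gamma$ and $\tilde\delta$ genuinely cobound a subdisk of $\tilde A$ with boundary arcs in the two distinct components (this uses that $\tilde A$ is simply connected and that lifts with the same endpoints on $S^1$ are nested), and ruling out that some $T \in G$ folds $D$ onto itself — this requires knowing the stabilizer of $\{a,b\}$ in $G$ is the infinite cyclic group generated by the deck transformation dual to $A$, which then must displace the relatively compact "fundamental slab" $D$. Once these topological facts are in place, the contradiction with minimality is immediate from the technique already used in Lemmas~\ref{bounded} and~\ref{per}.
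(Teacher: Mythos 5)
Your proposed route differs substantially from the paper's, and it contains a genuine gap at the central step. The paper argues directly with the dynamics of $\partial A$: by point (3) of the previous lemma $\partial A\subset\mathcal{M}$ and $\mathcal{M}\cap A\neq\emptyset$ (since $\mathcal{F}\neq\{U\}$), so by minimality some $x\in\partial A$ has $f^n(x)\in A$; then, using point (4) and a lifting argument with a deck transformation fixing $\{a,b\}$, the paper shows $f^n(\partial A)=\partial A$, which contradicts $f^n(x)\in A$. Your plan instead builds a subregion $D\subset\tilde A$ cobounded by lifts $\tilde\gamma\subset\tilde U$ and $\tilde\delta\subset\tilde V$ and claims $p|_D$ is injective, so that $\Delta=p(D)$ is a disk to which the argument of Lemma~\ref{bounded} applies.

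That injectivity claim is false, and the reasoning you give for it is internally inconsistent. You correctly identify that the only deck transformations that could identify points of $D$ are powers of the hyperbolic element $T_0\in G$ with fixed points $a,b$ (the deck transformation dual to the core of $A$), but then assert that $T_0$ ``moves $D$ off itself.'' In fact $T_0$ stabilizes both $\tilde\gamma$ and $\tilde\delta$ as sets: $\gamma$ and $\delta$ are simple closed curves freely homotopic to the core of $A$, and the lifts with endpoints $\{a,b\}$ are precisely the $T_0$-invariant ones. Hence $T_0(D)=D$, $p|_D$ is not injective, and $p(D)$ is an essential \emph{annulus}, not a disk. With an annulus in place of a disk, the Lemma~\ref{bounded} machinery (which hinges on periodic lifts of a disk producing a proper closed invariant set) no longer applies as stated, and the earlier, vaguer part of your plan (``$\mathcal{M}\cap p(D)\neq\emptyset$ and $\partial p(D)\cap\mathcal{M}=\emptyset$, run the minimality argument'') does not by itself yield a contradiction: having every component of $\mathcal{M}$ eventually enter an essential annulus is not forbidden by minimality. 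To repair this you would essentially need to reintroduce the recurrence-of-$\partial A$ argument that the paper uses, which brings you back to the paper's proof rather than a genuinely different one.
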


\begin{proof} Suppose for a contradiction that ${\cal F}\neq \{U\}$, and let $A\in {\cal A}$ be given by the previous lemma. By item \ref{tres} in the previous lemma, $\partial A\subset {\cal M}$.  Moreover, ${\cal M}\cap A\neq \emptyset$ because ${\cal F}\neq \{U\}$. Therefore there exists $x\in \partial A$ and $n> 0$ such that $f^n(x)\in A$.  We claim that $f^n(U)_{\infty}= A_{\infty}$. First note that by item \ref{cuatro} in the previous lemma there exists $U_0\in {\cal F}$ and $z\in\partial U_0$ such that $f^n(z)\in A$. Then, $f^n(U_0)\cap A \neq \emptyset$, $f^n(U_0)\cap {\cal M} = \emptyset$ and so $f^n(U_0)\subset A$.  This implies that $f^n(U)_{\infty}= f^n(U_0)_{\infty}= A_{\infty}$.

We claim that $f^n(\partial A) = \partial A$.  Otherwise, because of minimality of $\mathcal{M}$ we have that $A\setminus f^n(A)$ is non-empty. Furthermore, due to the point 4 of the last lemma there exists an annulus $A_0\in\Pi_0(\mathcal{M}^c)$ such that $A_0\subset A\setminus f^n(A)$.

Let us fix $\{a,b\}=S^1\cap \tilde{\alpha}$ where $\tilde{\alpha}$ is the lift of an essential loop in $U_0$  and a lift $g$ of $f^n$ such that $g$ leaves invariant $\{a,b\} $. Further let $\tilde{A}$ be the lift of $A$ which contains $\tilde{\alpha}$ and $\tilde A_0\subset \tilde A$ lift $A_0$. So, $\tilde A_0\subset \tilde A\backslash \tilde g(\tilde A)$ which implies $\tilde g^{-1}(\tilde A_0)\notin \tilde A$.  But $f^{-n}(A_0)\in {\cal F}$, so $g^{-1}(\tilde A_0)\subset \tilde A$. This contradiction proves the claim.

However, $f^n(\partial A) = \partial A$ contradicts the fact that $x\in \partial A$ and $f^n(x)\in A$.

\end{proof}

\begin{cor}\label{anu} The number of essential annuli in $\Pi_0 ({\cal M}^c)$ is finite.
\end{cor}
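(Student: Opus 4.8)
The plan is to leverage the injectivity of the assignment $U\mapsto U_\infty$ that the previous lemma establishes: it says exactly that distinct elements of $\mathcal{A}\cap\Pi_0(\mathcal{M}^c)$ have distinct associated subsets $U_\infty\subset S^1$. So it suffices to bound the number of such components by a function of the genus, and I would do this by reducing to a system of disjoint simple closed curves on $S$.

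First, for each $U\in\mathcal{A}\cap\Pi_0(\mathcal{M}^c)$ fix a core curve $\gamma_U$ generating $\pi_1(U,x_U)$; it is a simple closed curve, and since $U$ is essential we have $i_*[\gamma_U]\neq 0$ in $\pi_1(S,x_0)$, so $\gamma_U$ is an essential simple closed curve in $S$. The curves $\{\gamma_U\}$ are pairwise disjoint, being contained in distinct connected components of $\mathcal{M}^c$.

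Next I would verify that $U\neq V$ implies $\gamma_U$ and $\gamma_V$ are non-isotopic in $S$. If they were isotopic they would in particular be freely homotopic, and lifting such a free homotopy one obtains lifts $\tilde\gamma_U$ and $\tilde\gamma_V$ of $\gamma_U$ and $\gamma_V$ to $\D$ with $\overline{\tilde\gamma_U}\cap S^1=\overline{\tilde\gamma_V}\cap S^1$; by the criterion recorded right after the definition of $U_\infty$, this forces $U_\infty=V_\infty$, hence $U=V$ by the previous lemma, a contradiction.

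Finally I would invoke the classical fact that a closed orientable surface of genus $g\geq 2$ admits at most $3g-3$ pairwise disjoint, pairwise non-isotopic essential simple closed curves (the cardinality of a pants decomposition). Applied to the family $\{\gamma_U:U\in\mathcal{A}\cap\Pi_0(\mathcal{M}^c)\}$ this yields $\#\big(\mathcal{A}\cap\Pi_0(\mathcal{M}^c)\big)\leq 3g(S)-3<\infty$, since $S$ hyperbolic forces $g(S)\geq 2$. The only delicate point is the third paragraph — certifying that distinct annular components have genuinely non-isotopic cores — and that rests entirely on the previous lemma; everything else is bookkeeping and a citation to surface topology.
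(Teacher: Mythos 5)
Your proof is correct and follows essentially the same route as the paper: use the previous lemma to certify that distinct annular components have non--freely-homotopic core curves, observe that the cores are pairwise disjoint, and bound the resulting collection by $3g-3$. The only cosmetic difference is that the paper passes to geodesic representatives (citing \cite{bound} for the $3g-3$ bound on disjoint simple closed geodesics) whereas you work directly with the cores and invoke the equivalent bound for pairwise disjoint, pairwise non-isotopic essential simple closed curves.
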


\begin{proof}
Let $\mathcal{A}\subset \Pi_0(\mathcal{M}^c)$ be the family of all the essential annuli. Hence to each element $U\in\mathcal{A}$ we can associate a simple and closed geodesic $\gamma_U$. Furthermore, due to the previous lemma, the set $\mathcal{G}=\{\gamma_U:U\in\mathcal{A}\}$ is a pairwise disjoint family. Therefore, since the cardinal of any family of pairwise disjoint simple and closed geodesics is upper bounded by $3g-3$ (see \cite{bound}), we have that $\mathcal{G}$ is finite. Thus $\mathcal{A}$ is finite and its cardinal is upper bounded by $3g-3$.
\end{proof}

\begin{cor}\label{fini} The number of essential  components in $\Pi_0 ({\cal M}^c)$ is finite.
\end{cor}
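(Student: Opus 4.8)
The plan is to separate the essential components of $\Pi_0(\mathcal{M}^c)$ into those that are annuli and those that are not, and to bound each family. Corollary \ref{anu} already gives that there are only finitely many essential annuli in $\Pi_0(\mathcal{M}^c)$, so the whole problem reduces to bounding the number of essential components that are \emph{not} annuli; I claim this number is at most $2g(S)-2$, and I would obtain it from an Euler characteristic count.

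First I would attach a core to each such component. Let $U\in\Pi_0(\mathcal{M}^c)$ be essential and not an annulus. Being essential, $U$ is not fully essential, so $i_*\colon\pi_1(U,x_0)\to\pi_1(S,x_0)$ is injective by Lemma \ref{inj}. By Corollary \ref{puncturedsurf}, $U$ is homeomorphic to a closed surface with a finite and nonempty set of punctures (nonempty since a closed subsurface of the connected surface $S$ would be all of $S$). Deleting small pairwise disjoint open disks around the punctures gives a compact connected subsurface $K_U\subset U$ onto which $U$ deformation retracts; thus $\chi(K_U)=\chi(U)$, and a short check on the genus and the number of punctures, using that $U$ is neither a disk nor an annulus, gives $\chi(U)\le-1$. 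The two properties of $K_U$ that I would single out are: $\partial K_U$ is a nonempty disjoint union of simple closed curves of $S$; and each component of $\partial K_U$ is non-trivial in $U$ --- here one uses precisely that $U$ is not a disk --- hence, by injectivity of $i_*$, non-trivial in $S$. In particular no component of $\partial K_U$ bounds an embedded disk in $S$.

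Then comes the counting. Let $U_1,\dots,U_N\in\Pi_0(\mathcal{M}^c)$ be distinct essential non-annulus components with cores $K_1,\dots,K_N$; these are pairwise disjoint compact subsurfaces of $S$. Set $R=\overline{S\setminus\bigcup_{i=1}^N\inte K_i}$, a compact subsurface with $\partial R=\bigcup_i\partial K_i$. Since $S=\bigcup_iK_i\cup R$, the pieces meeting only along the circles of $\bigcup_i\partial K_i$, additivity of the Euler characteristic (circles having $\chi=0$) yields $\chi(S)=\sum_{i=1}^N\chi(K_i)+\chi(R)$. Now $R$ has no closed component (it would be clopen in $S$, hence all of $S$, impossible as $N\ge1$) and no disk component (its boundary circle would bound a disk in $S$, contradicting the last property of the previous paragraph); so every component of $R$ has nonpositive Euler characteristic and $\chi(R)\le0$. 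Therefore $2-2g(S)=\chi(S)\le\sum_{i=1}^N\chi(K_i)\le-N$, i.e.\ $N\le 2g(S)-2$. Combined with Corollary \ref{anu}, this gives that $\Pi_0(\mathcal{M}^c)$ has finitely many essential components.

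The only genuinely non-formal step is the claim that the boundary curves of the core $K_U$ stay non-trivial in $S$: it is exactly what rules out disk components of $R$ and makes the count close, and it relies on the two facts already available --- that an essential component is not a disk (so its punctured-off core has $\pi_1$-essential boundary) and that $i_*$ is injective on essential components (Lemma \ref{inj}).
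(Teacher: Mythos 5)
Your proof is correct, and it takes a more unified and more quantitative route than the paper's. The paper's own argument splits the essential non-annulus components into two subfamilies --- those of positive genus (declared ``obviously'' finite) and those of genus zero, i.e.\ $k$-punctured spheres with $k\geq 3$ (declared finite via injectivity of $i_*$, with no further detail) --- and then invokes Corollary \ref{anu} for the annuli. You instead treat all essential non-annulus components at once: you build pairwise disjoint compact cores $K_i$ with $\chi(K_i)\leq -1$, check that the peripheral circles $\partial K_i$ are non-contractible in $S$ (this uses exactly the two facts you isolate: ``essential $\Rightarrow$ not a disk'', so a circle around a puncture is non-trivial in $U$, together with injectivity of $i_*$ from Lemma \ref{inj}), and then run the cut-and-paste additivity $\chi(S)=\sum_i \chi(K_i)+\chi(R)$ with $\chi(R)\leq 0$ to get the explicit bound $N\leq 2g(S)-2$, where the paper only asserts finiteness. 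Both proofs rely on Corollary \ref{anu} for the annular components. The one subtlety --- that a circle encircling a puncture of a finitely punctured closed surface $U$ is null-homotopic in $U$ only when $U$ is the once-punctured sphere, i.e.\ a disk --- is exactly what rules out disk components of $R$ and makes your count close; you handle this correctly by appealing to essentiality, which excludes the disk case.
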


\begin{proof}  It is obvious that the number of components with genus in  $\Pi_0 ({\cal M}^c)$ is finite.  Furthermore, as $i_{*}: \pi_1(C,x_0)\to \pi_1(S, x_0)$ is injective for $C\in \Pi_0 ({\cal M}^c)$ (see Lemma \ref{inj}), the number of genus zero components which are not annulus in  $\Pi_0 ({\cal M}^c)$ is also finite.  We are now done by Corollary \ref{anu}.
\end{proof}

\begin{cor}\label{peri}  Every essential component $U$ in $\Pi_0 ({\cal M}^c)$ is periodic. In particular, ${\cal M} = \partial U \cup \partial f(U) \cup \ldots\cup \partial f^p (U)$.
\end{cor}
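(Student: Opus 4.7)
The plan is to combine the first lemma of this subsection (essential non-annular components are periodic) with Corollary \ref{anu} (finiteness of essential annuli in $\Pi_0({\cal M}^c)$) to cover all essential components, and then extract the formula for $\mathcal{M}$ from minimality.

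First I would observe that $f$ permutes the elements of $\Pi_0({\cal M}^c)$, since ${\cal M}^c$ is $f$-invariant. Moreover, being an essential annulus is preserved by homeomorphisms of $S$: if $V$ is essential, then so is $f(V)$, because $f_* : \pi_1(S,x_0) \to \pi_1(S,f(x_0))$ is an isomorphism conjugating the inclusion-induced maps of $V$ and $f(V)$; and clearly $f(V)$ is an annulus iff $V$ is. Hence $f$ restricts to a permutation of the subfamily of essential annuli inside $\Pi_0({\cal M}^c)$, and likewise to a permutation of the essential non-annulus components.

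Now let $U \in \Pi_0({\cal M}^c)$ be essential. If $U$ is not an annulus, periodicity follows immediately from the first lemma of this subsection. If $U$ is an essential annulus, then $U$ belongs to the finite family $\mathcal{A} \cap \Pi_0({\cal M}^c)$ by Corollary \ref{anu}. Since this finite family is permuted by $f$, the orbit $\{f^n(U)\}_{n \in \N}$ is finite, so some $f^p(U) = U$; that is, $U$ is periodic.

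For the second assertion, fix an essential periodic $U$ with period $p$ and set
\[
K \;=\; \partial U \,\cup\, \partial f(U) \,\cup\, \ldots \,\cup\, \partial f^{p-1}(U).
\]
Then $K$ is closed (finite union of closed sets), contained in ${\cal M}$ (as the boundary of any component of ${\cal M}^c$ is in ${\cal M}$), and $f$-invariant by periodicity of $U$. Since $\mathcal{M}$ is a minimal set and $K \neq \emptyset$ (as $U$ is essential, in particular $\partial U \neq \emptyset$), minimality forces $K = \mathcal{M}$, which is exactly the asserted decomposition. The only subtle point in the whole argument is the verification that $f$ permutes essential annuli (not just arbitrary components), but this is a direct consequence of $f$ being a homeomorphism of $S$; the rest is a clean invocation of the finiteness result and minimality.
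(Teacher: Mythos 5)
Your proof is correct and takes essentially the approach the paper intends; the paper states Corollary \ref{peri} without an explicit proof, as a direct consequence of Corollary \ref{fini} (finiteness of all essential components) followed by the standard minimality argument you give. The only cosmetic difference is that you split into annular and non-annular cases, whereas one can invoke Corollary \ref{fini} directly to get finiteness, and hence periodicity, of all essential components at once.
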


We finish this section with the proof of Proposition \ref{cont}

\begin{proof} By definition of type 2 minimal set, there exists an essential component $U$ in $\Pi_0 ({\cal M}^c)$. The previous Corollary implies that ${\cal M} = \partial U \cup \partial f(U) \cup \ldots\cup \partial f^p (U)$. Furthermore, Corollary \ref{finiteboundcomp} implies that 
\\$\mathcal{F}=\{C\subset S: C\in\Pi_0(\partial f^i(U))\mbox{ for some }i=0,...,p\}$ is a finite family. Hence if we take ${\cal C_0}$ as to be a connected component of ${\cal M}$ contained $\partial U$, it has to be periodic. Thus due to the minimality of $\mathcal{M}$ we are done.
\end{proof}

\end{subsection}

\begin{subsection}{Surgery}

If $K\subset S$ is a continuum and ${\cal B} = \{\beta _1, \ldots, \beta _n\}$ is a finite family of loops in $S\backslash K$, we define $S^*_{\cal B}$ to be the closure of the connected component of $S\backslash \cup_{i=1}^n \beta _i $ containing  $K$.

\begin{figure}[ht]

\centerline{\subfigure[]{\psfrag{Beta}{$\beta$}\includegraphics[scale=0.35]{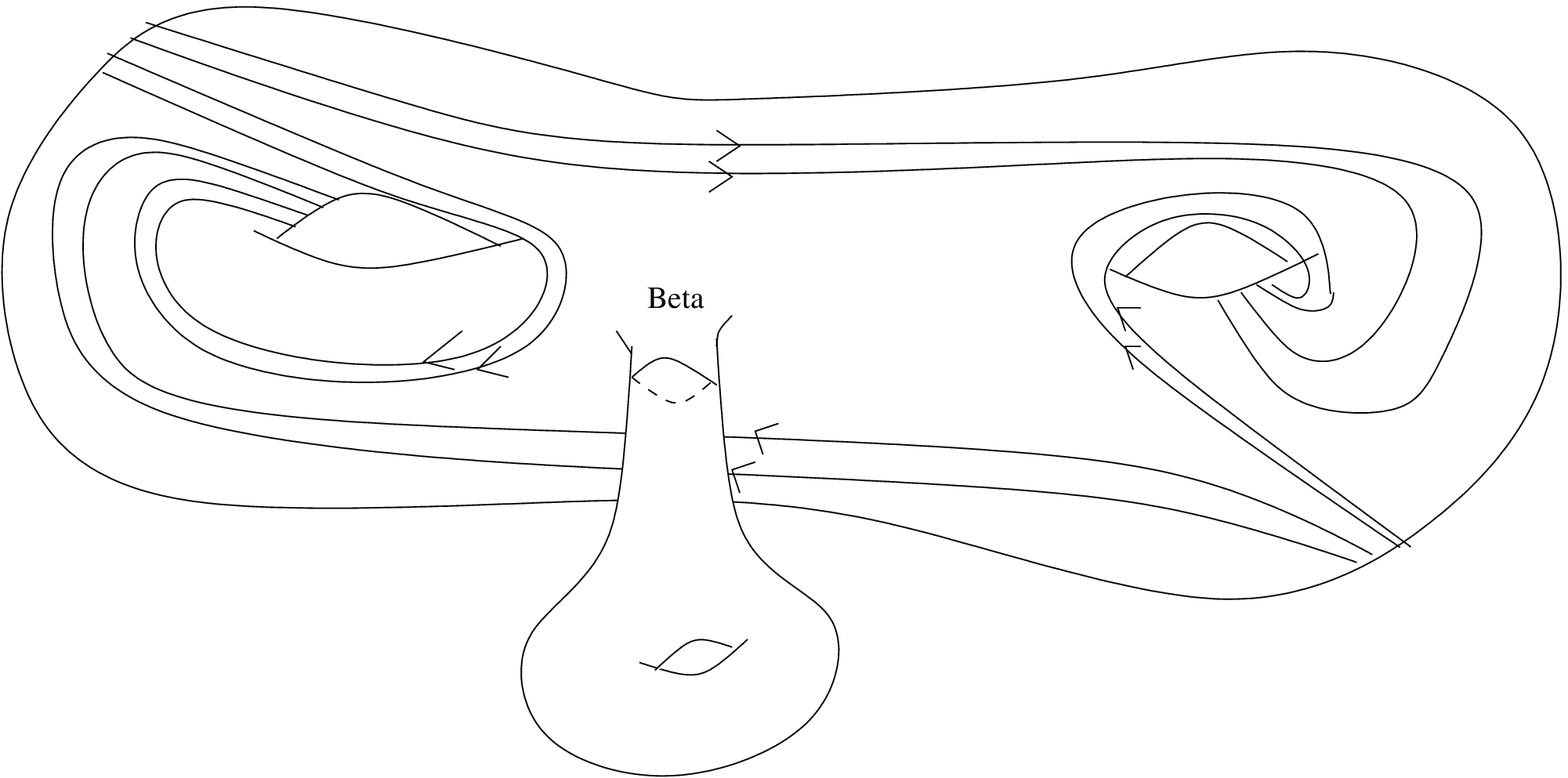}}\hspace{.1in}
\subfigure[]{\includegraphics[scale=0.35]{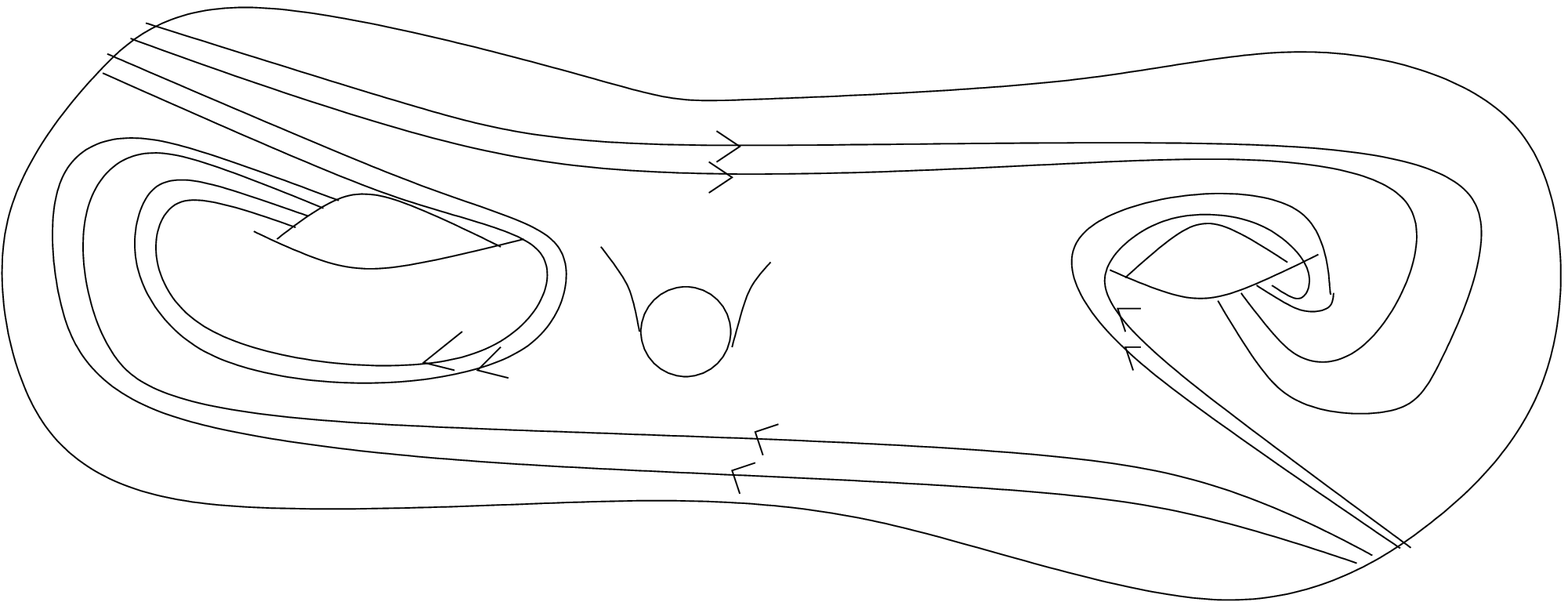}}}

\caption{${\cal B}$ and $S^*_{\cal B}$}\label{surgery}
\end{figure}

\begin{lema}\label{surg} Let $K\subset S$ be a continuum such that:

\begin{itemize}
\item $\#\{{\cal E}(S\backslash K)\} <\infty$
\item If $C\in  {\cal E}(S\backslash K)$ , then $i_*: \pi_1 (C, x_0)\to \pi_1 (S,x_0)$ is injective.
\end{itemize}

Then, there exists a finite family of simple, essential and pairwise disjoint loops in $S\backslash K$, ${\cal B} = \{\beta _1, \ldots, \beta _n\}$  such that:

\begin{enumerate}

\item \label{ii}the elements in $\Pi_0 (S^*_{\cal B}\backslash K)$ are either annuli with one boundary component contained in $\cup_{i=1}^n \beta _i$ or disks with a disjoint closure of $\bigcup_{i=1}^n\beta_i$;

    \item \label{iii}If  $g: C\to C$ is a homeomorphism, where  $C\in {\cal E}(S\backslash K)$ , then $\{[\beta]: \beta\in {\cal B}\cap C\} = \{[g(\beta)]: \beta\in {\cal B}\cap C\}$, where $[\gamma]\in \pi_1(S)/_{\alpha \sim \alpha ^{-1}}$
        \end{enumerate}
\end{lema}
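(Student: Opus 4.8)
The idea is to work separately inside each essential component $C \in \mathcal{E}(S \backslash K)$. First I would invoke Corollary \ref{puncturedsurf}: since $C$ is essential and $i_*$ is injective, $C$ is homeomorphic to a finitely-punctured closed surface $\Sigma_C \backslash X_C$, where the number of punctures $X_C$ is finite and at least $\#\Pi_0(\partial C)$ by Corollary \ref{finiteboundcomp}. The plan is to choose, in each such $C$, a finite family of simple pairwise disjoint essential loops that cut $C$ into the desired pieces. Concretely, inside $\Sigma_C$ pick a collection of disjoint simple closed curves realizing a pants-type decomposition adapted to the punctures so that, after removing small annular neighborhoods of the punctures that lie near $\partial C$ on one side, each resulting piece is either (i) a half-open annulus whose compact boundary circle is one of the chosen loops and whose other "end'' accumulates on a boundary component of $K$, or (ii) a disk whose closure misses all chosen loops. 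Taking $\mathcal{B} = \bigcup_{C \in \mathcal{E}(S\backslash K)} \mathcal{B}_C$ and using that $\mathcal{E}(S\backslash K)$ is finite (first hypothesis), $\mathcal{B}$ is a finite family of simple, essential, pairwise disjoint loops in $S \backslash K$. By construction $S^*_{\mathcal{B}}$ is the closure of the component of $S \backslash \bigcup \beta_i$ containing $K$, and $\Pi_0(S^*_{\mathcal{B}} \backslash K)$ consists precisely of these annular and disk pieces, giving item \ref{ii}.

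For item \ref{iii}, fix $C \in \mathcal{E}(S\backslash K)$ and a homeomorphism $g: C \to C$. The key point is that $g$ permutes the free homotopy classes of simple closed curves in $C$, and in particular permutes the conjugacy classes (up to inversion) of the peripheral loops and the pants-curves. The loops in $\mathcal{B} \cap C$ were chosen to be exactly those that are "peripheral toward $K$'', i.e. each $\beta \in \mathcal{B}\cap C$ is freely homotopic in $S$ to a loop in a prescribed annular neighborhood of a component of $\partial K \cap \overline{C}$, equivalently its class in $\pi_1(S)/_{\alpha\sim\alpha^{-1}}$ is determined by a boundary/puncture-datum of $C$ that $g$ must preserve (since $g$ is a self-homeomorphism of $C$, it permutes the punctures of $\Sigma_C$ and hence the associated peripheral classes). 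Thus $g$ sends each $[\beta]$ to some $[\beta']$ with $\beta' \in \mathcal{B}\cap C$, and being a bijection on this finite set of classes, $\{[\beta] : \beta \in \mathcal{B}\cap C\} = \{[g(\beta)] : \beta \in \mathcal{B}\cap C\}$.

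The main obstacle, I expect, is making item \ref{ii} precise: one must choose the $\beta_i$ so that the pieces of $S^*_{\mathcal{B}} \backslash K$ are genuinely only annuli-with-one-free-end and disks — in particular ruling out pieces of higher genus or with several loops on their boundary. This requires knowing that all the topology (genus and multiple ends) of $C$ is "absorbed'' on the far side from $K$, which is where one uses that $C$ is a punctured surface together with a careful count: the punctures of $\Sigma_C$ split into those corresponding to ends of $C$ that accumulate on $K$ (one handles these with the annular pieces) and the rest (which, together with the genus, get cut off into the disk pieces after enough curves are removed). Getting the free-homotopy bookkeeping consistent between the two items — so that the loops used to produce the annuli in \ref{ii} are exactly the ones whose classes $g$ is forced to permute in \ref{iii} — is the delicate part; everything else is a routine surface-topology construction.
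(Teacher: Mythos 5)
Your core geometric idea — take peripheral loops around the punctures of each essential component $C$, so that each loop cuts off an annular neck accumulating on $K$ — is exactly what the paper does, and your argument for item~\ref{iii} (a self-homeomorphism of $C$ permutes the punctures, hence permutes the classes of the peripheral loops) is the paper's argument as well.

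However, the extra machinery you introduce around item~\ref{ii} rests on a misreading of $S^*_{\cal B}$, and that misreading is where your proposal goes wrong. First, there is no dichotomy among the punctures of $\Sigma_C$: since $C$ is a connected component of $S\backslash K$ and $K$ is closed, $\partial C\subset K$, so \emph{every} end of $C$ accumulates on $K$. The second class of punctures you describe is empty. Second, and more importantly, the genus and ``extra'' topology of $C$ do not have to be ``absorbed'' or ``cut off into disk pieces'': by definition $S^*_{\cal B}$ is the closure of the single component of $S\backslash\bigcup\beta_i$ containing $K$, and the peripheral loops $\beta^C_i$ separate the core of $C$ (everything beyond the $k_C$ collar annuli) from $K$. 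That core therefore lies in \emph{other} components of $S\backslash\bigcup\beta_i$ and never enters $S^*_{\cal B}$ at all. There is nothing to count. The disks in $\Pi_0(S^*_{\cal B}\backslash K)$ are simply the trivial components of $S\backslash K$, which are untouched by ${\cal B}$; they are simply connected because $K$ is a continuum. The ``pants-type decomposition'' is not only unnecessary — it could not even serve the purpose you assign it, since one cannot cut a positive-genus or multi-boundary subsurface into disks using pairwise disjoint simple closed curves only (that requires arcs, which the lemma forbids). Once you read the definition of $S^*_{\cal B}$ correctly, the $k_C$ peripheral loops per essential $C$ are all that is needed, which is precisely the paper's construction.
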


\begin{proof} If $C\in {\cal E}(S\backslash K)$, the fact that $i_*: \pi_1 (C, x_0)\to \pi_1 (S,x_0)$ is injective implies due to Corollary \ref{puncturedsurf} that $C$ is homeomorphic to a surface of finite type (i.e. finite genus and finitely many punctures). That is, $C$ is homeomorphic to a $k_C$-punctured surface, $k_C\in \N, k_C\geq 1$. Pick a simple loop $\beta^C _i, i=1, \ldots, k_C$ surrounding each puncture in such a way that the $\beta^C _i$'s are pairwise disjoint, and define ${\cal B}_C= \{\beta ^C_1, \ldots, \beta^C _{k_C}\}$.  By the choice of the $\beta^C_i$'s, for any $i=1,\ldots,k_C$, $C\backslash \beta^C_i$ has two connected components, one of which is an annulus $A^C_i$ with one boundary component $\beta^C _i$ and the other contained in $K$.
Define $${\cal B} = \bigcup _{C\in {\cal E}(S\backslash K)} {\cal B}_C.$$
If $U\in{\cal E}(S^*_{\cal B}\backslash K)$, then $U = A^C_i $ for some $C\in \Pi_0 (S\backslash K)$ and $i=1,\ldots,k_C$. On the other hand if $U\in \Pi_0(S^*_{\mathcal{B}}\setminus K)\setminus {\cal E}(S^*_{\cal B}\backslash K)$ we have that it is trivial and, due to $K$ be connected, simply connected. This gives us \ref{ii}.

Furthermore, by construction, $\{[\beta]: \beta \in {\cal B}\cap C\}$ is determined by the topology of $C$, which gives \ref{iii}.
\end{proof}

Notice that in the last lemma we have by construction for every loop in $\partial S^*_{\beta}$, that they are accumulated by points in $S^*_{\beta}$ only by one side, with respect some fixed orientation. In particular, this implies that $S':=S^*_{\beta}/_{\partial S^*_{\beta}}$ is a closed surface. Furthermore, again by construction, we have that $g(S')<g(S)$.

If ${\cal C}\subset S$ is a continuum and type 2 minimal set for $f\in \homeo$, then ${\cal C}$ verifies the hypothesis of Lemma \ref{surg} (see Lemma \ref{inj} and Corollary \ref{fini} and note that the existence of an essential component $U\in \Pi_0 ({\cal M}^c)$ forbids the existence of a fully essential component in $\Pi_0 ({\cal M}^c)$).

\begin{lema}\label{no}  Let ${\cal C}$ be a continuum and type 2 minimal set for $f\in \homeo$, and let ${\cal B}$ be given by Lemma \ref{surg}.  Then, there exists $n_0\in \N$ and $g:S\to S$ such that $g=f^{n_0}$ in ${\cal M}$ and $g(\beta) = \beta$ for all $\beta \in {\cal B}$.

\end{lema}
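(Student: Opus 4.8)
The plan is to realise the required homeomorphism $g$ as a finite power of $f$ composed with a correction isotopy that straightens the images of the loops in $\mathcal{B}$ back to themselves. The starting point is Corollary \ref{peri}: every essential component $U\in\Pi_0(\mathcal{M}^c)$ is periodic, and the finitely many essential components (Corollary \ref{fini}) are permuted by $f$. Likewise, by construction in Lemma \ref{surg} the family $\mathcal{B}$ is a union of subfamilies $\mathcal{B}_C$ indexed by the essential components $C\in\mathcal{E}(\mathcal{M}^c)$, and each $\mathcal{B}_C$ is intrinsically determined by the topology (the puncture structure) of $C$. Since $f$ permutes the components $C$ and carries each to a homeomorphic one, part \ref{iii} of Lemma \ref{surg} gives that $f$ maps the free-homotopy classes $\{[\beta]:\beta\in\mathcal{B}\}$ to themselves as a set. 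Choosing $n_0$ to be a common period for all essential components, $f^{n_0}$ fixes each essential component $C$ setwise, and then part \ref{iii} applied to $g_0:=f^{n_0}|_C:C\to C$ shows $\{[\beta]:\beta\in\mathcal{B}\cap C\}=\{[f^{n_0}(\beta)]:\beta\in\mathcal{B}\cap C\}$; enlarging $n_0$ further (multiplying by a common period of the induced permutations of the finite sets $\mathcal{B}_C$) we may assume $f^{n_0}$ fixes the free-homotopy class $[\beta]$ of each individual $\beta\in\mathcal{B}$.

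Next I would upgrade ``fixes the free-homotopy class'' to ``fixes the loop up to isotopy''. Each $\beta\in\mathcal{B}$ is a simple essential loop, and $f^{n_0}(\beta)$ is a simple loop freely homotopic to $\beta$; since the $\beta$'s are pairwise disjoint and the $f^{n_0}(\beta)$'s are pairwise disjoint, standard surface topology (the ``change of coordinates'' / bigon-removal argument for simple closed curves, or Epstein's theorem that homotopic simple closed curves on a surface are ambient isotopic) gives an isotopy $h_t$ of $S$, supported away from $\mathcal{M}$ — one can keep the support in a small neighbourhood of $\bigcup\beta\cup\bigcup f^{n_0}(\beta)$, which lies in $\mathcal{M}^c$ since each $\beta\in S\backslash\mathcal{C}\subset\mathcal{M}^c$ and $f^{n_0}(\mathcal{M}^c)=\mathcal{M}^c$ — with $h_0=\mathrm{id}$ and $h_1(f^{n_0}(\beta))=\beta$ for every $\beta\in\mathcal{B}$ simultaneously. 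One does this one loop at a time inside successively chosen disjoint annular neighbourhoods, using that disjointness of both families lets the isotopies be chosen with disjoint supports. Set $g:=h_1\circ f^{n_0}$. Then $g(\beta)=\beta$ for all $\beta\in\mathcal{B}$, and since $h_1$ is the identity on $\mathcal{M}$ we have $g|_{\mathcal{M}}=f^{n_0}|_{\mathcal{M}}$, as required.

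The main obstacle I anticipate is bookkeeping around orientation and the *two-sided* matching of the loops: fixing $\beta$ up to isotopy as an unoriented curve is not quite enough for the later quotient construction $S'=S^*_{\mathcal{B}}/\partial S^*_{\mathcal{B}}$, where one wants $g$ to respect which side of $\beta$ is the annulus $A^C_i$ and which is $K$. I would handle this by noting that $g$ permutes the complementary pieces $\Pi_0(S^*_{\mathcal{B}}\backslash\mathcal{C})$ and must send the annulus abutting $\beta$ to the annulus abutting $g(\beta)=\beta$ on the same side (since it preserves $\mathcal{C}$, hence the ``$K$-side'' is canonically characterised), and then absorb any remaining orientation-reversing twist along $\beta$ into a further isotopy supported in a collar of $\beta$; possibly this forces one more doubling of $n_0$ so that the induced map on a transversal to $\beta$ is orientation-preserving. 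A second, more routine point to be careful about: $n_0$ must be chosen after all the finitely many enlargements above — common period of the essential components, then common period of the permutations of the sets $\mathcal{B}_C$ — but since $\mathcal{E}(\mathcal{M}^c)$ and each $\mathcal{B}_C$ are finite (Corollary \ref{fini}, Lemma \ref{surg}), only finitely many least common multiples are taken and $n_0\in\mathbb{N}$ is well defined.
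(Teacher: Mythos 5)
Your proposal follows essentially the same route as the paper: use Corollary \ref{peri} and Corollary \ref{fini} to find a power $n_0$ of $f$ fixing each essential component and, via Lemma \ref{surg}(\ref{iii}), each free-homotopy class $[\beta]$, then correct $f^{n_0}$ by an isotopy supported in $\mathcal{M}^c$ to make each $\beta$ invariant on the nose. The paper compresses the last step into the phrase ``it is a well-known fact''; your expansion of it (and your aside about orientation, which the quotient $S^*/\partial S^*$ in fact renders harmless) is just added detail, not a different argument.
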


\begin{proof}  By Corollary \ref{peri}, essential components in $\Pi_0({\cal C}^c)$ are periodic.  So, there exists $n\in \N$ such that $f^{n} (C)=C$ for all essential $C\in \Pi_0({\cal C}^c)$.  Furthermore, by Lemma \ref{surg}, Item \ref{iii} $\{[\beta]: \beta\in {\cal B}\cap C\} = \{[f^n(\beta)]: \beta\in {\cal B}\cap C\}$.  As ${\cal B}$ is a  finite family, we take a power $n_0$ of $f$ fixing every $[\beta]$, $\beta\in {\cal B}$.  It is a well known fact that one can modify $f^{n_0}$ with an isotopy supported in ${\cal M}^c$ and obtain $g:S\to S$ such that $g(\beta) = \beta$ for all $\beta \in {\cal B}$.
\end{proof}

Let $\cont\subset S$ be a continuum and type 2 minimal set of $f\in \homeo$. Further, consider the family of pairwise essential loops ${\cal B}$ given by Lemma \ref{surg} and $S^*=S^*_{\cal B} $. Let $n_0$ and $g$ be given by Lemma \ref{no}. We give a proof for Theorem \ref{typ2str2}:

\begin{proof}  The quotient map is a homeomorphism restricted to $\inte S^*$, and therefore conjugates $g|_{\cal C}$ and $\hat g|_{\cal C'}$, which gives us $\ref{pri}$. By Lemma \ref{surg}, item \ref{iii}, $S^*\backslash {\cal C}$ is a union of disks or annuli with one boundary component contained in ${\cal B}$. So, $S'\backslash {\cal C'}$ is a union of disks. Furthermore, for each $C\in \mathcal{E}(\cont^c)$ there is at least one $\beta \in {\cal B}\cap C$, which gives us \ref{segu}.

\end{proof}

\begin{figure}[ht]

\centerline{\subfigure[]{\includegraphics[scale=0.5]{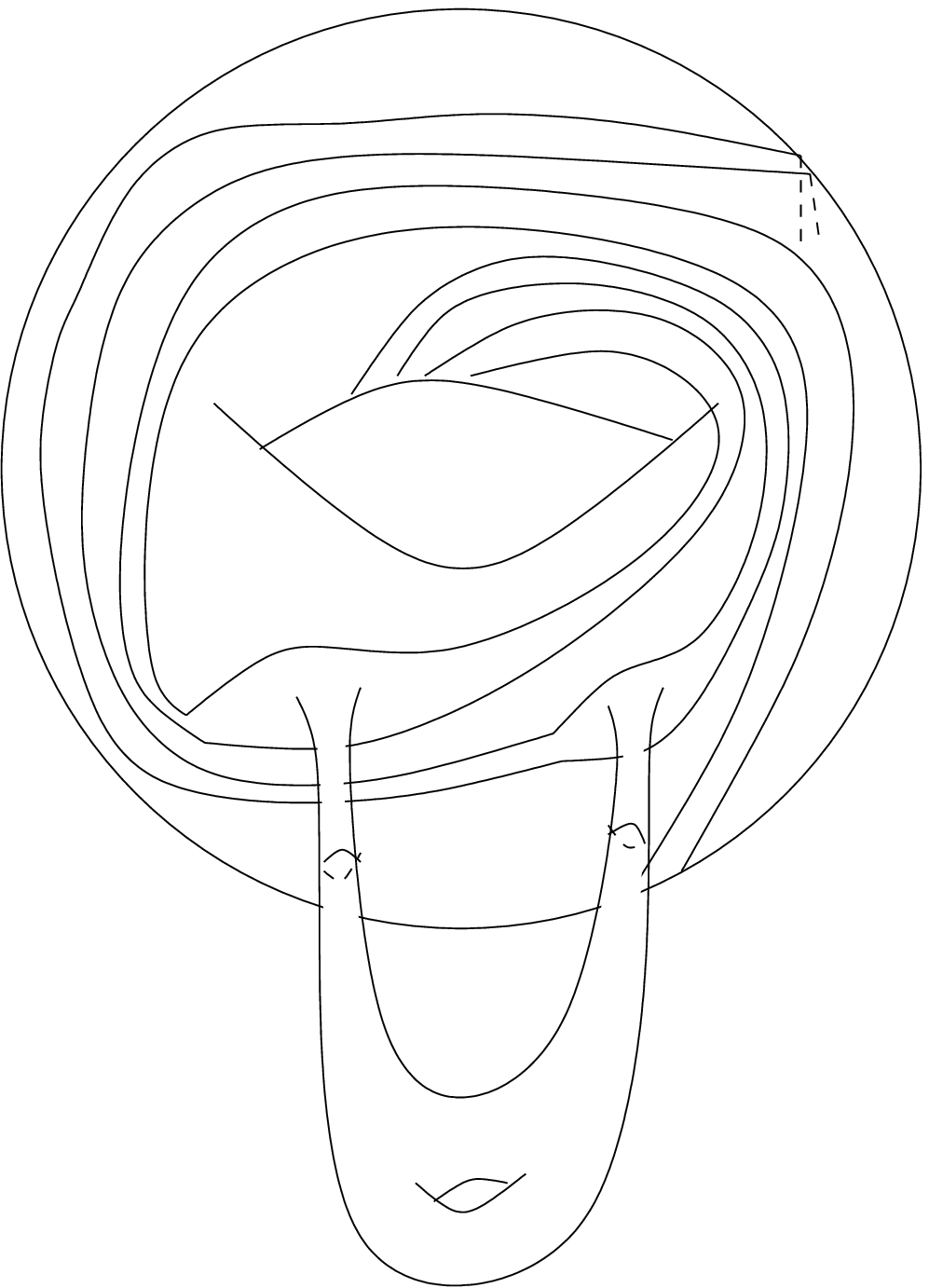}}\hspace{.2in}
\subfigure[]{\includegraphics[scale=0.5]{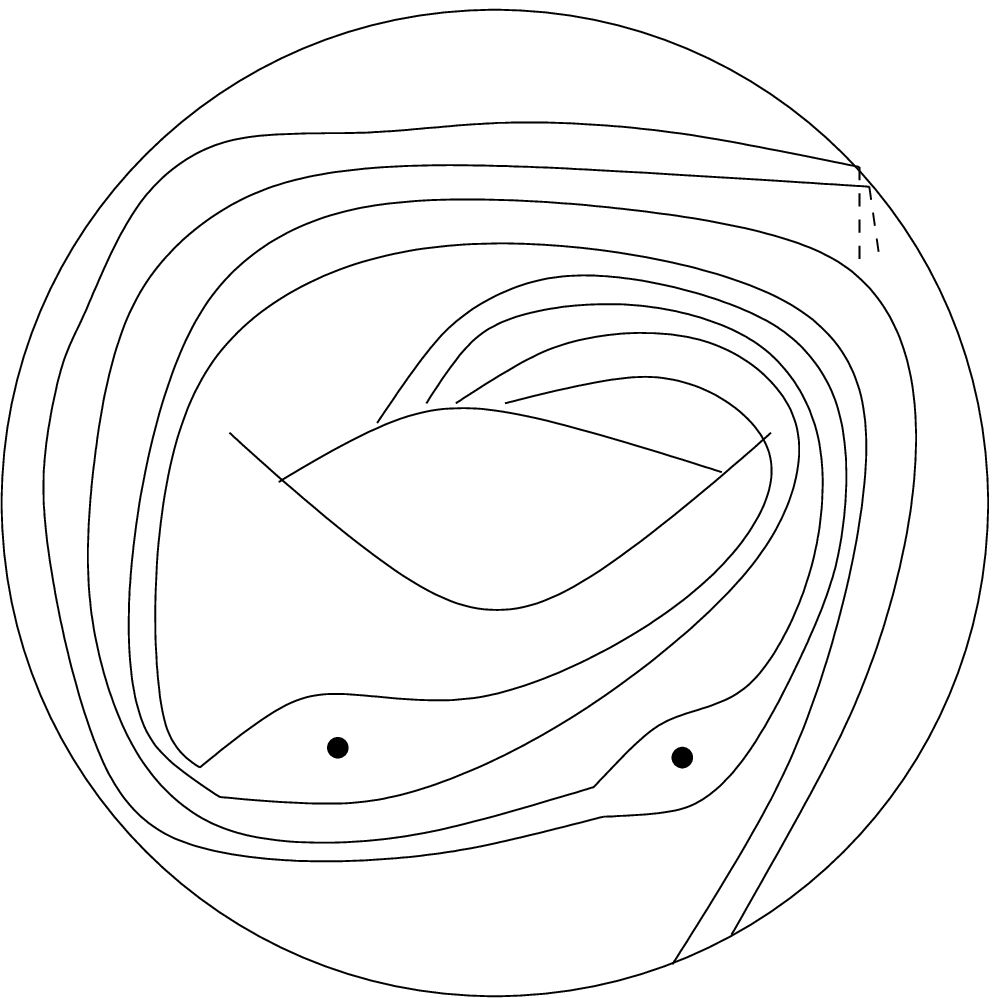}}}

\caption{Proof of Theorem \ref{typ2str2} }\label{42}
\end{figure}

\end{subsection}

\end{section}

\begin{section}{Type 3 minimal sets.}\label{type3}

As usual, we consider a closed hyperbolic surface $S$.  Recall that a minimal set $\mathcal{M}\subset S$ of $f\in\homeo$ is an extension of a minimal Cantor set (of a periodic orbit) if there exists a map $h:S\rightarrow S$ homotopic to the identity such that:

\begin{itemize}

\item[(i)] $h\circ f=\hat{f}\circ h$ for some $\hat{f}\in\homeo$;

\item[(ii)] $\mathcal{M}'=h(\mathcal{M})$ is a minimal cantor set (is a periodic orbit) of $\hat{f}$, and $h^{-1}(y)$ contains a unique connected component of $\mathcal{M}$ for every $y\in\mathcal{M}'$.

\end{itemize}

In this section we prove the following

\begin{prop}\label{3} If ${\cal M}$ is a type 3 minimal set, then $\mathcal{M}$ is  an extension of a periodic orbit or a minimal cantor set.
\end{prop}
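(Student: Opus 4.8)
The plan is to exploit the defining property of a type 3 minimal set, namely (by Theorem \ref{prel}) that there is a fully essential component $V\in\Pi_0(\mathcal{M}^c)$ while every other component of $\mathcal{M}^c$ is a bounded disk. Since the bounded disks are, by Lemma \ref{35}, wandering (a type 3 set is in particular not of type 1 or 2, but one should check Lemma \ref{35} as stated only covers types 1 and 2 — here I would instead argue directly that a periodic bounded disk forces, via Lemma \ref{per} and Lemma \ref{T}, a second fully essential component of $\mathcal{M}^c$, contradicting that $V$ already carries all of $\pi_1(S)$ and that distinct components are disjoint). The key structural observation is then that $\mathcal{M}\cup\bigcup\{\text{bounded disks of }\mathcal{M}^c\}$ is the complement of $V$, and collapsing each of these bounded disks to a point should produce the factor map $h$.

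The first step is to organize the bounded disks. Let $\{D_j\}_{j\in J}$ be the collection of bounded disks in $\Pi_0(\mathcal{M}^c)$; their closures $\overline{D_j}$ are pairwise disjoint closed disks, $f$ permutes them, and each $\overline{D_j}$ has connected boundary contained in $\mathcal{M}$. I would like to define $h:S\to S$ as the quotient collapsing each $\overline{D_j}$ to a point. For this to yield a continuous map to a surface homeomorphic to $S$ — and homotopic to the identity — I need the family $\{\overline{D_j}\}$ to be a \emph{null family} (diameters tending to $0$, only finitely many of diameter $>\varepsilon$ for each $\varepsilon$). This is the standard Moore-type decomposition: collapsing a null sequence of disjoint disks in a surface yields a homeomorphic surface and the quotient map is homotopic to (indeed uniformly approximable by) the identity. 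That the family is null follows because $\mathcal{M}$ is compact and the $D_j$ are disjoint open subsets of $\mathcal{M}^c$ whose boundaries lie in $\mathcal{M}$: any subsequence with diameters bounded below would accumulate, producing a point of $\mathcal{M}$ in the "interior" of the limit, contradicting that the $D_j$ are components of the \emph{open} set $\mathcal{M}^c$.

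Next, set $\mathcal{M}' = h(\mathcal{M})$. Since $h$ collapses each bounded disk to a point and is injective elsewhere, $h(\mathcal{M})$ is exactly $\mathcal{M}$ with each boundary-of-a-disk piece identified to a point. Because $h\circ f$ and $\hat f\circ h$ agree (define $\hat f$ on $S'\cong S$ as the map induced by $f$ — well-defined since $f$ permutes the collapsed disks), property (i) holds. For (ii): $\mathcal{M}'$ is a minimal set of $\hat f$ (continuous image of a minimal set under a semiconjugacy is minimal), and by construction $h^{-1}(y)$ meets $\mathcal{M}$ in exactly one connected component of $\mathcal{M}$ for each $y\in\mathcal{M}'$ — either a single point (if $y$ is not the image of a collapsed disk boundary) or the single continuum $\partial D_j\subset\mathcal{M}$ that bounded $D_j$. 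It remains to see $\mathcal{M}'$ is either a Cantor set or a periodic orbit: $\mathcal{M}'$ is a compact minimal set all of whose complementary components in $S'$ are disks (the image of $V$, which is now a fully essential \emph{disk-complement}, i.e. $\mathcal{M}'$ has connected complement $= h(V)$, plus nothing else), so $\mathcal{M}'$ is a minimal set whose complement is a single open set homeomorphic to... — here I would invoke that a minimal set with totally disconnected structure is either finite (hence a periodic orbit) or a Cantor set; concretely, $\mathcal{M}'$ has empty interior (it cannot be all of $S'$ since $V\neq\emptyset$) and no isolated-arc structure, and a minimal set with empty interior in a surface that is not a single periodic orbit and whose complementary components are all disks is totally disconnected, hence a Cantor set by the usual dichotomy for minimal sets.

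The main obstacle I anticipate is the Moore-decomposition bookkeeping: verifying rigorously that $\{\overline{D_j}\}$ is a null family and that collapsing it gives a \emph{surface} homeomorphic to $S$ with quotient map homotopic to $\mathrm{id}$, and then that $\mathcal{M}'$ is genuinely totally disconnected (ruling out that some continuum of $\mathcal{M}$ survives the collapse). The first part is classical (Moore's theorem on upper semicontinuous decompositions of surfaces) but needs the null property, which in turn rests on compactness of $\mathcal{M}$ plus the fact that each $D_j$ is a \emph{bounded} disk with $\partial D_j\subset\mathcal{M}$. The second part is where I would need to argue that after collapsing, no nondegenerate continuum remains: if $K\subset\mathcal{M}'$ were a nondegenerate connected subset, its preimage in $\mathcal{M}$ would be a connected set meeting $\mathcal{M}$ in more than the boundary of one disk, forcing $\mathcal{M}\setminus(\text{disks})$ to contain a nondegenerate continuum — but the complement of $\mathcal{M}$ in $S$ is $V\cup\bigcup D_j$ with $V$ fully essential, and one checks (as in the proof that bounded disks are wandering) that $\mathcal{M}$ minus the disk boundaries cannot contain a continuum without creating a second essential complementary component or contradicting minimality.
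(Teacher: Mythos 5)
Your approach goes wrong at the choice of decomposition. You collapse the closures of the bounded disks $D_j\in\Pi_0(\mathcal{M}^c)$; the paper instead collapses the \emph{fills of the connected components of $\mathcal{M}$ itself}: it sets $G=\{\lleno(\cont):\cont\in\Pi_0(\mathcal{M})\}\cup\{\text{singletons off }\cup\lleno(\cont)\}$, shows this is an upper semicontinuous partition into filled contractible continua (Lemma \ref{ups}), and applies the Roberts--Steenrod theorem. These are genuinely different decompositions, and the difference is fatal to your argument. If $\cont\in\Pi_0(\mathcal{M})$ is a nondegenerate continuum whose complement is connected --- for instance a tree-like or pseudo-arc continuum sitting inside a disk --- then there are \emph{no} bounded disks in $\Pi_0(\mathcal{M}^c)$ associated to it, your $h$ is the identity near $\cont$, and $h(\cont)=\cont$ survives as a nondegenerate continuum in $\mathcal{M}'$. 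So $\mathcal{M}'$ fails to be a periodic orbit or a Cantor set, and your final dichotomy does not apply. The hand-waved claim at the end (``$\mathcal{M}$ minus the disk boundaries cannot contain a continuum'') is precisely what fails here: nothing in minimality or in the type-3 hypothesis forbids components of $\mathcal{M}$ from being nondegenerate, complement-connected continua.

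There are further problems even when bounded disks do exist. First, $\partial D_j$ need not be a \emph{whole} component of $\mathcal{M}$: a component $\cont\supsetneq\partial D_j$ can have a part that sticks out of $\overline{D_j}$, and that part is not collapsed, so $h(\cont)$ is again a nondegenerate continuum and $h^{-1}(h(\overline{D_j}))$ does not contain any full component of $\mathcal{M}$, violating condition (ii). Second, the closures $\overline{D_j}$ need not be pairwise disjoint (two disks of $\mathcal{M}^c$ can have boundaries meeting inside a single component of $\mathcal{M}$, e.g.\ a figure-eight-type continuum bounding two touching disks), so $\{\overline{D_j}\}$ together with singletons is not even a partition, and your Moore-type collapse is not well defined as stated. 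These difficulties disappear once you collapse $\lleno(\cont)$ for $\cont\in\Pi_0(\mathcal{M})$: each whole component of $\mathcal{M}$ is crushed to a point, the fills are pairwise disjoint (Lemma \ref{ups}, using Lemma \ref{bounded}), each fill is a filled contractible continuum contained in a bounded disk (by Lemma \ref{ltop}), and Roberts--Steenrod gives $S/G\cong S$ with the quotient map homotopic to the identity; property (ii) is then immediate by construction, and the rest follows as in \cite{Kwajapa}.
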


The analogue result for the torus is proved in $\cite{Kwajapa}$. As much of the work is already done in that paper, rather than repeating the proofs we give precise references.

The topological tool needed for the proof is a theorem by Roberts and Steenrod \cite{rs} generalizing for surfaces a result of Moore for the sphere \cite{moore}.  It concerns upper semi-continuous partitions of $S$ into continua.

A partition $G$ of $S$ into closed sets is upper semi-continuous if for each $g\in G$ and each open set $U$ containing $g$, there exists an open set $V\subset U$ containing $g$ and such that every $g'\in G$ which meets $V$ lies in $U$.

In our context ($S$ is a compact metric space), $G$ is upper semi-continuous if and only if for every $\{g_n\}_{n\in\N}\subset G$ with a Hausdorff limit $X$, there exists $g\in G$ such that $X\subset g$.

We say that a set $X\subset S$ is {\it filled contractible} if in each open neighborhood of $X$ contains a disk $D$ having $X$ in its interior, such that $D\setminus X$ has a unique connected component.

\begin{teo}\label{mo}\emph {\cite {rs}} If $G$ is an upper semi-continuous partition of $S$ such that every $g\in G$ is a filled contractible continuum, then $S/G$ is homeomorphic to $S$.
\end{teo}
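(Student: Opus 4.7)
The plan is to adapt Moore's sphere theorem to closed surfaces by recognizing $S/G$ as a closed $2$-manifold with the same genus and orientation as $S$, then concluding via the classification of closed orientable surfaces. I would avoid constructing a homeomorphism $S\to S/G$ directly, and instead argue abstractly.

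First I would establish the point-set topology of the quotient. Upper semi-continuity of $G$, combined with compactness of $S$ and of each $g\in G$, yields that $S/G$ is compact Hausdorff; metrizability then follows because $S$ is a compact metric space and the decomposition classes are closed. Connectedness and local connectedness are inherited from $S$ via the continuous surjection $q:S\to S/G$.

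Next I would argue that $S/G$ is a topological $2$-manifold. Given $g\in G$, filled contractibility furnishes arbitrarily small closed disks $D\subset S$ with $g\subset \inte(D)$ and $D\setminus g$ connected; using upper semi-continuity I would further shrink $D$ so that every $g'\in G$ meeting $D$ is already contained in $\inte(D)$. The restricted decomposition $G|_D$ is then an upper semi-continuous partition of $D$ into filled contractible continua whose nontrivial classes lie in $\inte(D)$, and Moore's local theorem (the disk version) gives $D/G|_D$ homeomorphic to $D$; pushing this chart down yields a disk neighborhood of $q(g)$ in $S/G$. For the identification with $S$, I would use that each $g\in G$ has trivial reduced \v{C}ech cohomology (filled contractibility is the $2$-dimensional cellularity condition, and cellular continua have trivial shape), so the Vietoris--Begle theorem gives $\check H^*(S/G;\Z)\cong\check H^*(S;\Z)$ and hence $\chi(S/G)=\chi(S)$. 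Orientability of $S/G$ is inherited from $S$ through the orientation-preserving local charts described above, and the classification of closed orientable surfaces forces $S/G\cong S$.

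The main obstacle is the local model: when $D$ is a closed disk and $g\subset\inte(D)$ is a filled contractible continuum, one must show that $D/g$ is again a disk. This is the surface analogue of the cellularity criterion, typically established by a Bing-style shrinking argument producing self-homeomorphisms of $D$ that fix $\partial D$ and shrink $g$ to arbitrarily small diameter while staying uniformly Cauchy off $g$; the uniform limit provides the quotient homeomorphism. The delicate point---and where the full force of upper semi-continuity enters---is to organize these local shrinkings coherently across overlapping charts so that the local disk identifications patch to a global homeomorphism between $S$ and $S/G$, rather than merely a near-homeomorphism.
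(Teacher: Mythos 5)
You should first be aware that the paper contains no proof of Theorem \ref{mo}: it is quoted verbatim from Roberts and Steenrod \cite{rs} and used as a black box, so your sketch can only be compared with the classical arguments. Your overall architecture --- establish that $S/G$ is a compact, connected, metrizable surface by producing Moore-type charts, transfer \v{C}ech cohomology through the acyclic-fibre quotient map via Vietoris--Begle (filled contractible continua are cellular, hence \v{C}ech-acyclic), and conclude by the classification of closed orientable surfaces --- is a legitimate route, and the cohomological endgame is sound (orientability is better extracted from $\check{H}^2(S/G;\Z)\cong\check{H}^2(S;\Z)\cong\Z$ than from ``orientation-preserving charts'').

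There are, however, two concrete problems. First, the chart construction as written is false: you claim that upper semi-continuity lets you shrink the disk $D$ so that \emph{every} $g'\in G$ meeting $D$ lies in $\inte(D)$. This is impossible --- any element (even a singleton) touching $\partial D$ meets $D$ without being contained in $\inte(D)$, so ``$G|_D$'' is not a decomposition of $D$ into subsets of $D$ and the disk version of Moore's theorem does not apply as stated. The standard repair is different: upper semi-continuity gives that $W=\bigcup\{g'\in G:\ g'\subset \inte(D)\}$ is an open saturated neighborhood of $g$; one then extends the elements of $G$ lying in a compact saturated neighborhood inside $W$ by singletons to a decomposition of $S^2$ (using that $D$ embeds in the sphere), checks that this induced decomposition is still upper semi-continuous, and applies Moore's theorem on $S^2$ to obtain a Euclidean neighborhood of $q(g)$. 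Second, your closing paragraph undercuts the strategy: the whole point of arguing through the classification of surfaces is that the local disk identifications need \emph{not} be patched into a global homeomorphism; insisting on such patching is the Bing-shrinking route (proving $q$ is a near-homeomorphism), which is a genuinely harder statement and is nowhere carried out in your sketch. As written, the proposal wavers between the two strategies and completes neither; with the chart step repaired as above, the abstract classification argument does go through.
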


If  a closed set $X\subset S$ is contained in a bounded disk $D\subset S$, then $\lleno (X)$ is defined as to be the union of $X$ with the bounded components of its complement.  So, $\lleno (X)$  is a filled contractible set.

As ${\cal M}$ is a type 3 minimal set, then every $\cont \in \Pi_0({\cal M})$ is contained in a bounded disk (see Lemma \ref{ltop}).  So, $\lleno(\cont)$ is defined for all $ \cont \in \Pi_0({\cal M})$.

\begin{lema}\label{ups}  Let $G=\{\lleno({\cal C}): {\cal C}\in \Pi_0({\cal M})\} \cup \{x\in S : x\notin \cup _{{\cal C}\in \Pi_0({\cal M}}\lleno({\cal C}) \}$. Then, $G$ is an upper semi-continuous partition of $S$ in filled contractible sets.
\end{lema}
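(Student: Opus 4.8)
The plan is to verify the two conditions defining $G$: that it is a partition of $S$, and that it is upper semi-continuous; the fact that each element is a filled contractible continuum is already built into the definition of $\lleno$ (and singletons are trivially filled contractible). For the partition property, the only thing to check is that the sets $\lleno(\cont)$, $\cont \in \Pi_0(\mathcal{M})$, are pairwise disjoint. Since $\mathcal{M}$ is of type 3, Lemma \ref{ltop} gives that each $\cont\in\Pi_0(\mathcal{M})$ is contained in a bounded disk, hence $\lleno(\cont)$ is defined and is itself contained in a bounded disk $D_\cont$ whose boundary circle lies in $\mathcal{M}^c$ (one can take $D_\cont$ so that $\partial D_\cont$ misses $\mathcal{M}$, enlarging slightly). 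If $\lleno(\cont) \cap \lleno(\cont') \neq \emptyset$ for distinct components $\cont,\cont'$ of $\mathcal{M}$, then since $\cont'$ is connected and disjoint from $\cont$, either $\cont' \subset \lleno(\cont)$ or $\cont \subset \lleno(\cont')$: indeed $\cont'$ cannot meet the boundary of the filled disk bounding $\lleno(\cont)$ because that boundary is in $\mathcal{M}^c$, so $\cont'$ lies entirely in a bounded complementary component of $\cont$ (hence in $\lleno(\cont)$) or entirely outside $\lleno(\cont)$ — but the outside case contradicts the assumed intersection. By symmetry we get, say, $\cont' \subset \lleno(\cont)$, and then also $\lleno(\cont') \subset \lleno(\cont)$ since filling only adds bounded complementary components. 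Now $\mathcal{M} \setminus \lleno(\cont)$ is nonempty (it contains $\cont'' \neq \cont$ for some other component, or at worst we argue $\mathcal{M}$ connected contradicts type 3 being a single continuum — more simply, a component strictly inside a bounded disk cannot be all of $\mathcal{M}$ since $\mathcal{M}^c$ would then be connected and essential, contradicting type 3). Combined with $\overline{\lleno(\cont)} \cap \mathcal{M}$ being closed and the orbit structure, this forces $\cont = \cont'$, giving disjointness.

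For upper semi-continuity I would use the sequential characterization stated in the excerpt: given $\{g_n\} \subset G$ with Hausdorff limit $X$, I must find $g \in G$ with $X \subset g$. Split into cases according to whether infinitely many $g_n$ are singletons or infinitely many are of the form $\lleno(\cont_n)$. If infinitely many are singletons $\{x_n\}$ with $x_n \to x$, I claim $x$ lies in some $g \in G$ containing $X = \{x\}$: either $x \notin \bigcup_{\cont} \lleno(\cont)$, in which case $\{x\} \in G$; or $x \in \lleno(\cont)$ for some $\cont$, and then $g = \lleno(\cont) \supset \{x\} = X$. The substantive case is infinitely many $g_n = \lleno(\cont_n)$ with $\cont_n \in \Pi_0(\mathcal{M})$ distinct (if they repeat, $X$ is contained in that repeated element and we are done). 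Here $\cont_n \subset \lleno(\cont_n) = g_n$, so passing to a subsequence the $\cont_n$ also Hausdorff-converge to some continuum $Y \subset X \subset \mathcal{M}$ (using that $\mathcal{M}$ is closed, the limit stays in $\mathcal{M}$). Let $\cont$ be the connected component of $\mathcal{M}$ containing $Y$. I then need two things: that $X \subset \lleno(\cont)$, and that $\lleno(\cont) \in G$, i.e. $\cont \in \Pi_0(\mathcal{M})$ — the latter is automatic.

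The main obstacle is showing $X \subset \lleno(\cont)$, i.e. that the filling operation is ``upper semi-continuous'' along the sequence: the Hausdorff limit of $\lleno(\cont_n)$ should not escape $\lleno(\cont)$. The key point is that each $\cont_n$ sits in a bounded complementary disk, and a fixed bounded disk $D$ containing $\cont$ with $\partial D \subset \mathcal{M}^c$ will, for $n$ large, also contain $\cont_n$ (since $\cont_n \to Y \subset \cont \subset D$ and $\cont_n$ is connected, disjoint from $\partial D \subset \mathcal{M}^c$, hence trapped inside $D$). Therefore $\lleno(\cont_n) \subset D$ for $n$ large, and any disk $D$ as above contains $\lleno(\cont)$ and hence $\overline{\bigcup_{n \text{ large}} \lleno(\cont_n)} \subset \overline{D}$; intersecting over a neighborhood basis of such disks around $\lleno(\cont)$ yields $X \subset \lleno(\cont)$. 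I would make the ``neighborhood basis of bounded disks with boundary in $\mathcal{M}^c$'' precise by noting $\lleno(\cont)$ is a filled contractible continuum whose complement has the component meeting $\mathcal{M}^c$ dense near $\partial \lleno(\cont)$, so shrinking works; this is the one place requiring care, and it is exactly parallel to the torus argument in \cite{Kwajapa}, so I would cite it there and only spell out the genus $>1$ adaptation. The conclusion is then immediate: $G$ partitions $S$ into filled contractible continua and is upper semi-continuous, as claimed. $\qed$
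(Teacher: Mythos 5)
Your outline (first the partition property, then sequential upper semicontinuity) matches the paper's, but in both halves the decisive step is either hand-waved or replaced by a different and unjustified topological claim. For the partition: like the paper, you reduce to showing the containment $\lleno(\cont)\subset\lleno(\cont')$ is impossible for distinct $\cont,\cont'\in\Pi_0(\mathcal{M})$, but then close with ``combined with $\overline{\lleno(\cont)}\cap\mathcal{M}$ being closed and the orbit structure, this forces $\cont=\cont'$,'' which is not an argument --- that nesting is $f$-equivariant, so no immediate contradiction with closedness or invariance arises, and this is in fact the entire content of the partition claim. The paper dispatches this by invoking Lemma~\ref{bounded}; one way or another an actual mechanism must be exhibited, and ``orbit structure'' as written does not supply one.

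For upper semicontinuity your route genuinely departs from the paper's, and this is where the larger gap lies. The paper shows $X\subset\mathcal{M}$ by producing, for each $x\in X$, points $y_n\in g_n\cap\mathcal{M}$ with $y_n\to x$ (citing \cite{Kwajapa} for the details); then, $X$ being a continuum, it lies inside a single $\cont\in\Pi_0(\mathcal{M})$ and hence inside $\lleno(\cont)\in G$. You instead try to trap $X$ inside $\lleno(\cont)$ by intersecting bounded disks $D\supset\lleno(\cont)$ with $\partial D\subset\mathcal{M}^c$ and claiming these form a neighborhood basis of $\lleno(\cont)$. That is a strong separation statement which you do not prove and which is not obviously true here: in the very situation under consideration, other components $\cont_n$ of $\mathcal{M}$ accumulate on $\cont$ from outside, and $\mathcal{M}^c$ being dense near $\partial\lleno(\cont)$ does not yield Jordan curves in $\mathcal{M}^c$ arbitrarily close to and enclosing $\lleno(\cont)$. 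Finding such curves is at least as hard as the conclusion being proved, and it is not the argument the cited reference supplies, so the deferral to \cite{Kwajapa} does not cover it. (You also write ``$Y\subset X\subset\mathcal{M}$'' in passing; $X\subset\mathcal{M}$ has not been established at that point --- and if it had been, the disk construction would be superfluous and you would be back to the paper's argument.)
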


\begin{proof} We first prove that $G$ is a partition. It is enough to prove that $\lleno({\cal C})\cap \lleno({\cal C}')=\emptyset$ if ${\cal C}$ and ${\cal C}'$ are different elements in $\Pi_0({\cal M})$. But $\lleno({\cal C})\cap \lleno({\cal C}')\neq \emptyset$ implies that $\lleno({\cal C})\subset \lleno({\cal C}')$ (or the other way round). However, this is impossible because of Lemma \ref{bounded}.

Let $\{g_n\}_{n\in\N}\subset G$ with a Hausdorff limit $X$. As every set in $g$ is a continuum, then $X$ is a continuum. Furthermore, by definition of $\lleno({\cal C})$ we have for every point $x\in X$ a sequence $(y_n)_{n\in\N}$ such that $y_n\in g_n\cap \mathcal{M}$ for every $n\in\N$ and $(y_n)_{n\in\N}$ converges to $x$ (see \cite{Kwajapa} for a detailed argument). Besides, as ${\cal M}$ is closed, $X\subset {\cal M}$. So, there exists $g\in G$ such that $X\subset G$.

\end{proof}

The proof of Proposition \ref{3} follows exactly as in \cite{Kwajapa} (see Lemma 3.18, and the proof of Addendum 3.17)

\end{section}

\begin{section}{Proof of Theorem A and Theorem B.}\label{proofAB}

In this section we prove the main results of the article, which are a classification of minimal set for surface homeomorphisms given by Theorem A, and the topological and dynamical characterization for type 2 minimal sets given in Theorem B (see the introduction).

\begin{teo}\label{teoA}

Let $S$ be a closed hyperbolic surface and $\mathcal{M}\subset S$ minimal set of $f\in\homeo$. Then either:

\begin{itemize}

\item[(1)] the elements in $\Pi_0(\mathcal{M}^c)$ are all disks, with at least one unbounded. Moreover, any bounded disk in $\Pi_0(\mathcal{M}^c)$ is wandering;

\item[(2)] $\mathcal{M}=\cont_0\cup...\cup\cont_{n-1}$, where $\{\cont_i\}_{i=0}^{n-1}$ is a family of pairwise disjoint continua each containing a connected component of $\partial V$ for some $V\in \mathcal{E}(\mathcal{M}^c)$, and  $f(\cont_i)=\cont_{i+1}$ $\textrm{mod}(n)$ for all $i\in \{0, \ldots, n-1\}$.

\item[(3)] $\mathcal{M}$ is either an extension of a periodic orbit or an extension of a minimal cantor set.

\end{itemize}
\end{teo}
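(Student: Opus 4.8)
The plan is to assemble Theorem \ref{teoA} from the three pillars already established in the paper, dividing into cases according to the preliminary classification in Theorem \ref{prel}. Recall that Theorem \ref{prel} tells us that exactly one of three alternatives holds: (a) every $U\in\Pi_0(\mathcal{M}^c)$ is a disk; (b) there is an essential component and $i_*$ is injective on every component; (c) there is a fully essential component and every other component is a bounded disk. The strategy is to show (a) gives case (1), (b) gives case (2), and (c) gives case (3) of Theorem \ref{teoA}.

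First I would treat alternative (a). Here $\mathcal{M}$ is by definition a type 1 minimal set, so Lemma \ref{35} applies and there are no bounded periodic disks in $\Pi_0(\mathcal{M}^c)$; in fact Lemma \ref{con} gives that every bounded disk is wandering, which is the second assertion in (1). For the existence of an unbounded disk, I would invoke Lemma \ref{36}: since $S$ has genus $g>1$, every homeomorphism has a periodic point, and because bounded disks are wandering this periodic point either lies in $\mathcal{M}$ (forcing $\mathcal{M}$ to be a periodic orbit, contradicting type 1) or lies in an unbounded periodic — hence unbounded — disk. This closes case (1).

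Next, alternative (c): $\mathcal{M}$ is a type 3 minimal set, and Proposition \ref{3} says precisely that such an $\mathcal{M}$ is an extension of a periodic orbit or of a minimal Cantor set, which is exactly case (3). Finally, alternative (b): $\mathcal{M}$ is a type 2 minimal set, and Proposition \ref{cont} gives directly the decomposition $\mathcal{M}=\cont_0\cup\cdots\cup\cont_{n-1}$ into pairwise disjoint continua, each containing a connected component of $\partial V$ with $V\in\mathcal{E}(\mathcal{M}^c)$, cyclically permuted by $f$ — this is case (2) verbatim. Since the three alternatives of Theorem \ref{prel} are mutually exclusive and exhaustive, and each routes to one of (1), (2), (3), the theorem follows.

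The only genuine content to check, beyond bookkeeping, is that the three alternatives of Theorem \ref{prel} match the three definitions of ``type 1/2/3'' used by the cited lemmas and propositions — but this is definitional, since the paper explicitly defines a minimal set to be of type 1, 2 or 3 according to which case of Theorem \ref{prel} it falls in. Thus I do not expect a real obstacle here; the proof is essentially a dispatch argument. The one place where care is needed is the appeal to Lemma \ref{36}, which uses $g(S)>1$; since $S$ is hyperbolic and closed this is automatic, but it is worth stating explicitly so the reader sees why the genus hypothesis enters.
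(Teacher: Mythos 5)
Your proposal is correct and follows exactly the same dispatch argument as the paper: invoke Theorem \ref{prel} to split into three mutually exclusive cases, then route case (a) through Lemmas \ref{35} and \ref{36}, case (b) through Proposition \ref{cont}, and case (c) through Proposition \ref{3}. One small point of hygiene: you cite Lemma \ref{con} to conclude bounded disks are wandering, but that lemma is stated only for \emph{connected} $\mathcal{M}$; the correct reference in the general case is Lemma \ref{35} (no bounded periodic disk), which suffices because distinct connected components of $\mathcal{M}^c$ are disjoint, so a non-periodic component is automatically wandering.
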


\begin{proof}

Theorem \ref{prel} gives us three complementary cases:
\begin{enumerate}

\item Every $U\in\Pi_0(\mathcal{M}^c)$ is a disk;

\item There exists an essential component $U\in\Pi_0(\mathcal{M}^c)$, and 
\\$i_{*}:\pi_1(V,x_0)\rightarrow \pi_1(S,x_0)$ is injective for all $V\in\Pi_0(\mathcal{M}^c)$;

\item There exists a fully essential component $V\in\Pi_0(\mathcal{M}^c)$ and every other component $U\in\Pi_0(\mathcal{M}^c)$ is a bounded disk.

\end{enumerate}

In the first case, Lemma \ref{36} gives us an unbounded periodic disk in $\mathcal{M}^c$, and Lemma \ref{35} gives us that any bounded disk in $\Pi_0(\mathcal{M}^c)$ is wandering.

In the second case, Proposition \ref{cont} gives us the desired result.

Finally, in the third case we apply directly Theorem \ref{3}.

\end{proof}

From now on we refer to a minimal set $\mathcal{M}$ as a type 1,2 or 3 minimal set, whenever it is in the case 1,2 or 3 of the last theorem.

In what follows we define some special continua which arise naturally as minimal sets for surface homeomorphisms, as one can see in the toral case (see \cite{Kwajapa}). Let $\cont\subset S$ be a continuum, we say that it is an essential \textit{circloid} if it verifies:

\begin{itemize}

\item[(i)] there exists some essential annuli $U$ such that $\cont\subset U$, and $U\setminus \cont$ has two unbounded components (components with non trivial fundamental group);

\item[(ii)] The continuum $\cont$ is minimal with respect the condition (i), that is, it does not strictly contain any other continuum satisfying property (i).

\end{itemize}

When $\cont\subset S$ only verifies (i), we say that $\cont$ is an annular continuum. The topology of such a continuum can be as simple as a loop or as complicated as a pseudo-circle. The following result can be find in \cite{Kwajapa}:

\begin{lema}\label{circmin}

If $\cont\subset S$ is an annular continuum which is a minimal set for $f\in\homeo$, then $\cont$ is a the boundary of a circloid.

\end{lema}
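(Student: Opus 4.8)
The plan is to reduce the statement to the minimality condition (ii) in the definition of circloid, by showing that an annular continuum $\cont$ which is minimal (as a dynamical system) must already be the boundary of a circloid, i.e.\ it cannot strictly contain a smaller annular continuum unless we pass to that smaller one, and in fact $\cont$ coincides with the boundary of the ``filled'' version. First I would fix an essential annulus $U$ with $\cont\subset U$ and $U\setminus\cont$ having two unbounded components $U^+,U^-$ (with non-trivial fundamental group), as provided by condition (i). Working in the annular cover $\hat S$ of $S$ associated to $U$ (or, equivalently, in $\D$ lifting an essential loop of $U$ to a geodesic with endpoints $a,b\in S^1$), I would consider the lift $\hat{\cont}$ of $\cont$ together with the two complementary unbounded pieces, and define $\cont^{fill}$ as the union of $\cont$ with all the connected components of $S\setminus\cont$ that are contained in $U$ and fail to be unbounded on both sides — informally, the set obtained by ``filling in everything except the two essential sides''. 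This $\cont^{fill}$ is a compact, connected, $f$-invariant-after-passing set in $U$ whose boundary components are the two topological circles $\partial^{\pm}\cont^{fill}$ separating $U^+$ from $U^-$.

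The key step is to observe that $\partial\cont^{fill}$ is a closed set contained in $\cont$ (every boundary point of $\cont^{fill}$ is a limit of points of $U^+$ or $U^-$, hence of points not in the fillable components, hence a point of $\cont$), and that it is invariant: since $f$ preserves $\cont$ and the orientation, and the annular structure around $\cont$ is essentially unique (the set $U_\infty\subset S^1$ attached to the annular continuum, as in the lemmas on type 2 minimal sets, is a dynamical invariant), $f$ must permute the two ends $U^+,U^-$ and therefore $f(\cont^{fill})=\cont^{fill}$ up to the finite permutation of the pieces, so $f(\partial\cont^{fill})=\partial\cont^{fill}$. By minimality of $\cont$ as a dynamical system, any non-empty closed invariant subset equals $\cont$; hence $\partial\cont^{fill}=\cont$. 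In particular $\cont$ has empty interior relative to $U$ and $\cont^{fill}$ is a bounded-on-neither-side-only continuum that strictly contains no proper annular subcontinuum (any such would give a proper closed invariant subset), so $\cont^{fill}$ is a circloid and $\cont=\partial\cont^{fill}$.

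The main obstacle I anticipate is making precise and rigorous the claim that $f$ cannot swap the two complementary ends $U^+$ and $U^-$ ``in a bad way'' and that the filling operation $\cont\mapsto\cont^{fill}$ commutes with $f$; this requires knowing that the annulus $U$ around $\cont$, or at least the pair of sides, is canonically determined by $\cont$. This is exactly the kind of statement established for essential annuli in $\Pi_0(\mathcal{M}^c)$ earlier in the paper (the invariance of $U_\infty$ and the uniqueness lemmas preceding Corollary \ref{anu}); I would invoke those to pin down that $\cont^{fill}$ depends only on $\cont$, and then passing to a power $f^N$ if necessary to ensure each side is individually preserved, use minimality to conclude. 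The remaining verifications — that $\partial\cont^{fill}$ consists of exactly two continua each separating $U^+$ from $U^-$, and that minimality of $\cont^{fill}$ under condition (i) holds — are then routine consequences of the structure of annular continua, and I would cite \cite{Kwajapa} for the details of the torus version, which transfer verbatim once the annular cover is fixed.
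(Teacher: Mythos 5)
The paper does not actually prove Lemma~\ref{circmin}: it is introduced with the single sentence \emph{``The following result can be find in \cite{Kwajapa}''} and no \texttt{proof} environment follows, so there is no in-paper argument to compare yours against. What you have written is a reconstruction of the argument one would expect to find in \cite{Kwajapa}, and the overall strategy --- pass to the annular cover, take the two unbounded complementary components $U^{+},U^{-}$, fill in the bounded ones, and use minimality of $\cont$ to identify $\cont$ with the boundary --- is indeed the natural route. The intermediate steps $\partial\cont^{\mathrm{fill}}\subset\cont$, $f(\cont^{\mathrm{fill}})=\cont^{\mathrm{fill}}$ (because $f$ merely permutes $U^{\pm}$), and $\partial\cont^{\mathrm{fill}}=\cont$ by minimality of the dynamical system, are all fine as you state them.

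The gap is in the last step: you justify that $\cont^{\mathrm{fill}}$ is a circloid by saying that any proper annular subcontinuum of it ``would give a proper closed invariant subset.'' That does not follow, since an arbitrary subcontinuum has no reason to be $f$-invariant; and in fact if $\cont$ has any bounded complementary components then $\cont$ itself is a proper annular subcontinuum of $\cont^{\mathrm{fill}}$, so with the paper's definition of circloid (minimal, under inclusion, among continua satisfying property (i)) the set $\cont^{\mathrm{fill}}$ is \emph{not} a circloid. The object meeting the definition is $\cont$ itself. One clean way to close the argument: once you have $\cont=\partial\cont^{\mathrm{fill}}$, observe that $\cont=\overline{U^{+}}\cap\overline{U^{-}}$ (this set is closed, nonempty by connectedness of $\cont$, and invariant under $f$ since $f$ only permutes the two ends, so it equals $\cont$ by minimality); this forces $\cont$ to have empty interior, so $\cont=\partial\cont$. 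Now if $K\subsetneq\cont$ were an annular continuum, its two unbounded complementary components $V^{\pm}$ would satisfy $V^{\pm}\supset U^{\pm}$, hence $\overline{V^{+}}\cap\overline{V^{-}}\supset\overline{U^{+}}\cap\overline{U^{-}}=\cont$; but also $\overline{V^{+}}\cap\overline{V^{-}}\subset K$, giving $\cont\subset K$, a contradiction. Thus $\cont$ is a circloid and $\cont=\partial\cont$, which is the statement (with the circloid being $\cont$, not $\cont^{\mathrm{fill}}$). With that correction your argument is complete.
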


Before stating the main theorem of this section, we need a result which is contained in the S. Matsumoto's and H. Nakayama's work \cite{MaNa}, concerning continua which are minimal set for homeomorphisms in the sphere.

\begin{teo}\label{ponjas}

Let $\cont\subset S^2$ be a continuum and minimal set for a homeomorphism $f\in\textrm{Homeo}(S^2)$. Then, there are at most two components in $\Pi_0(S^2\setminus \cont)$ which are periodic by $f$.

\end{teo}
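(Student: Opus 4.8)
The plan is to argue by contradiction, supposing that $\cont\subset S^2$ is a continuum and minimal set for $f\in\textrm{Homeo}(S^2)$ which has at least three periodic complementary components $U_0,U_1,U_2\in\Pi_0(S^2\setminus\cont)$. Replacing $f$ by a suitable power $f^N$ (which has the same minimal set $\cont$, since a periodic orbit of components of period $>1$ would make $\cont$ non-minimal, or more precisely since the $f^N$-orbit of $\cont$ is still $\cont$ and minimality of $\cont$ for $f$ forces minimality for $f^N$ once we check no proper closed invariant subset appears — here one uses that $\cont$ connected and $f(\cont)=\cont$), we may assume each $U_i$ is fixed: $f(U_i)=U_i$ for $i=0,1,2$. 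The first key step is to pass to the filled components $\widehat U_i:=\lleno(S^2\setminus U_i)^c$, or rather to work directly with each $U_i$ and note that $\partial U_i\subset\cont$; by minimality the forward orbit closure of any point of $\cont$ is all of $\cont$, so in fact $\overline{U_i}$ meets every other $\overline{U_j}$ only inside $\cont$, and $\cont=\bigcup_i\partial U_i$ need not hold, but $\partial U_i$ is $f$-invariant and closed, hence $\partial U_i=\cont$ for each $i$ by minimality. Thus all three fixed complementary components have the same boundary $\cont$.

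Next I would extract a contradiction from having three disjoint open sets $U_0,U_1,U_2$ all with common boundary $\cont$ and all $f$-invariant. The natural tool is prime-end theory (Carathéodory): uniformize each $U_i$ by the open disk and consider the prime-end rotation number $\rho_i$ of $f|_{U_i}$. One shows that the prime-end compactification identifications force a rigid combinatorial structure on $\cont$; in particular, using a Lefschetz-type or index argument one relates the rotation numbers to fixed-point data. The cleanest route, however, is probably to invoke the structure already available in the paper: $\cont$, being an $f$-minimal continuum in $S^2$ whose complement has an essential (for the sphere, simply a disk-type) invariant component, can be analyzed via the fact that $S^2\setminus U_i$ is a filled continuum containing $\cont$, and $f$ restricted to it again has $\cont$ as minimal set; iterating, three fixed complementary domains would produce, after suitable Moore-type collapsing (Theorem \ref{mo} applied to the non-separating pieces), a homeomorphism of $S^2$ with a minimal continuum having three fixed complementary disks — and then a standard argument shows two of the three domains must be ``nested'' with respect to the dynamics, contradicting that they are disjoint with common boundary.

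The step I expect to be the main obstacle is ruling out the third fixed domain rigorously: two disjoint open disks with common boundary a continuum is perfectly possible (e.g. the complement of a circle), and what must be used is the \emph{dynamical} constraint that $f$ fixes all three and $\cont$ is \emph{minimal}. The heart of the matter is a prime-end rotation number computation: if $U_0$ and $U_1$ are two fixed domains with $\partial U_0=\partial U_1=\cont$, the prime-end rotation numbers $\rho_0,\rho_1$ of $f$ on them, together with a Poincaré–Birkhoff / Cartwright–Littlewood type argument, force either a periodic point in $\cont$ (impossible, as $\cont$ is a minimal Cantor-free continuum with no periodic orbit since it is not a periodic orbit itself and $f|_\cont$ minimal) or a rigid rotational behavior that cannot be shared by three mutually disjoint domains — essentially because the circle of prime ends of $U_2$ would have to be ``inserted'' compatibly between those of $U_0$ and $U_1$, which is a combinatorial impossibility on $S^2$. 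I would carry this out by: (1) reducing to all $U_i$ fixed; (2) showing $\partial U_i=\cont$ for all $i$ by minimality; (3) invoking prime-end theory to get rotation numbers; (4) deriving the contradiction from the planarity of $S^2$ together with the absence of periodic points in $\cont$, citing Matsumoto–Nakayama's framework in \cite{MaNa} for the precise prime-end lemmas.
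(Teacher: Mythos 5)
The paper offers no proof of this statement: the sentence immediately preceding it explicitly introduces the theorem as ``a result which is contained in the S.~Matsumoto's and H.~Nakayama's work \cite{MaNa}'', and it is then invoked in the proof of Theorem~B as a black box. There is therefore no ``paper's own proof'' to compare yours against; what follows is an assessment of your outline on its own terms.

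Your setup is correct and is the natural one. Passing to a power $f^N$ so that the three components $U_0,U_1,U_2$ become fixed, observing that connectedness of $\cont$ forces $\cont$ to remain $f^N$-minimal (a connected set cannot split into finitely many pairwise disjoint nonempty closed $f^N$-minimal pieces), and concluding via minimality that $\partial U_i=\cont$ for each $i$ are all valid. The tools you name afterwards --- prime-end compactifications, rotation numbers, the Cartwright--Littlewood/Barge--Gillette link between rational prime-end rotation and periodic points in the boundary, and a Lefschetz-type index count --- are indeed the ones that make the argument work.

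The genuine gap is exactly where you flag uncertainty: the step that actually excludes a third fixed domain is never carried out. Phrases such as ``the prime-end compactification identifications force a rigid combinatorial structure'', ``a standard argument shows two of the three domains must be nested'', and ``the circle of prime ends of $U_2$ would have to be inserted compatibly, which is a combinatorial impossibility'' are restatements of the conclusion, not proofs of it. Since Lakes-of-Wada continua show that pure topology permits three (or more) disjoint domains with common boundary, this is precisely where all the dynamical content lives and cannot be absorbed into a ``standard argument''. Concretely, what is missing is an argument along the following lines: (i) replace $f$ by $f^2$ to make it orientation-preserving; (ii) on each fixed component $U_i$ the prime-end extension to the closed disk has irrational boundary rotation number (by Barge--Gillette, a rational one would give a periodic point in $\partial U_i=\cont$, impossible for a nontrivial minimal continuum), hence no boundary fixed points, hence fixed-point index $1$ inside $U_i$; (iii) every fixed point of $f$ lies in some \emph{fixed} complementary component (none in $\cont$ by minimality, none in wandering or strictly periodic components by definition), and these components are finitely many (otherwise Brouwer fixed points in them would accumulate onto a fixed point of $\cont$); (iv) by Lefschetz the indices sum to $\chi(S^2)=2$, so there are exactly two fixed complementary components, contradicting $\geq 3$. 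None of (ii)--(iv) appears in the proposal, and (ii) in particular is where the real work is. As written, the proposal is a correct reduction plus a survey of relevant tools, not a proof.
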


Now we are ready to give a proof for the Theorem B.

\begin{teo2}\label{teoB}

Let $S$ be a closed hyperbolic surface and $\mathcal{M}=\cont_0\cup...\cup\cont_{n-1}$ a type 2 minimal set of $f\in\homeo$. Then for every $k=0,...,n-1$ there exists $N>0$, such that $\cont_k$ is reducible to a type 1 minimal set for $f^N$. Furthermore,

\begin{itemize}

\item[(2i)] if $g(S') = 0$, then $\cont_k$ is the boundary of a circloid for every $k=0...n-1$ and $\mathcal{E}(\cont_k^c)\leq 2$. Further, every disk in $\Pi_0(\mathcal{M}^c)$ is wandering.

\item[(2ii)] if $g(S') > 0$ every bounded disk in $\mathcal{M}^c$ is wandering, and there are at least $\#\{{\cal E}({\cal C}_k^c)\}$ unbounded fixed disks in $\Pi_0 (S'\backslash{\cal C}_k')$.

\end{itemize}

\end{teo2}

\begin{proof}

For a fixed $k\in\{0,...,n-1\}$ we consider $S^*\supset \cont_k$ and $g\in \homeo$ associated to $\cont_k$ and $f^{n+n_0}$ as in Theorem \ref{typ2str2}. Let $N=n+n_0$. Then due to Theorem \ref{typ2str2} we have that $\cont_k$ is reducible to a type 1 minimal set for $f^N$.

Furthermore, if we consider $S'$ given by Theorem \ref{typ2str2} we have the two following complementary cases:

\begin{itemize}

\item[(2i)] $g(S')=0$: in this case we have that $\cont'$ is a minimal set of $\hat{g}$ in $S'$, and $S'$ can be identify with the sphere $S^2$ . Furthermore $\cont'$ is a continuum contained in $q(\inte S^*)$ and necessarily there are at least to different essential loops $\beta_1,\beta_2$ in $\partial S^*$ (otherwise $S'$ must have genus bigger than 0). Hence Theorem \ref{typ2str2} implies that there are at least two fixed disks $U_0,U_1\in\Pi_0(\cont'^c)$ of $\hat{g}$ separated by $\cont'$. Moreover due to Theorem \ref{ponjas} these two are the unique loops in $\partial S^*$.

Consider now a neighborhood $V\subset S^2\setminus \{q(\beta_1),q(\beta_2)\}$ of $\cont'$, which is an essential annulus in the open annulus $S^2\setminus \{q(\beta_1),q(\beta_2)\}$. Thus due to $q:\inte S^*\to S^2\setminus \{q(\beta_1),q(\beta_2)\}$ be a homeomorphism, we have that $W=q^{-1}(V)$ is an essential annulus containing $\cont$. This, together with Lemma \ref{circmin} implies that $\cont_k$ is the boundary of a  circloid. Hence the same holds for every $k=0,...,n-1$, so we are in case (2i) of the Theorem.

\item[(2ii)] $g(S')>0$: in this case the point (2ii) of the Theorem is a direct consequence of Theorem \ref{typ2str2} and Lemma \ref{35}.

\end{itemize}

\end{proof}

\end{section}

\begin{section}{Classification Theorem in the non-wandering case.}\label{nonwand}

We now restrict our classification to the class of non-wandering homeomorphisms. Recall that $f:S\rightarrow S$ is non-wandering if for every neighborhood $U$ in $S$ there exists $n\in\N$ such that $f^n(U)\cap U\neq\emptyset$. We denote by $\textrm{Homeo}_{nw}(S)$ the class of all such homeomorphisms.

As was done in the toral case(\cite{Kwajapa}), we make use of a result by A. Koropeki (\cite{koro}) which describes the topology of continua in surfaces which are $f$-invariant and aperiodic (i.e it does not contain periodic orbits) for some non-wandering homeomorphism $f$.

\begin{teo}

Suppose that $\cont\subset S$ is an $f$-invariant and aperiodic continuum for $f\in\textrm{Homeo}_{nw}(S)$. Then $\cont$ is an intersection of a decreasing sequence of annuli.

\end{teo}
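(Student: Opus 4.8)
Since a compact set has arbitrarily small annular neighbourhoods if and only if it is the intersection of a decreasing sequence of annuli (given such neighbourhoods, take a nested subfamily with diameters tending to $0$), it suffices to prove that $\cont$ is \emph{annular} in this sense. The plan is to argue by contradiction: assume $\cont$ is not annular and manufacture a periodic point of $f$ lying on $\cont$, contradicting aperiodicity. The non-wandering hypothesis will enter at exactly one place — to force the fixed point one eventually produces to sit on $\cont$ itself, rather than in a complementary component. The first step is to put $\cont$ into a topological normal form. Let $\hat{\cont}$ be obtained by adjoining to $\cont$ all bounded components of $S\setminus\cont$. Running the analysis of the homotopy type of complementary components that underlies Theorem \ref{prel}, together with the bound $3g(S)-3$ on any family of pairwise disjoint, pairwise non-freely-homotopic essential simple closed curves (the counting behind Corollaries \ref{anu} and \ref{fini}), one shows that $\hat{\cont}$ has only finitely many essential complementary components, each of finite topological type, and is contained in a canonical compact subsurface $\Sigma\subset S$ with $\partial\Sigma$ a finite union of essential simple closed curves. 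The hypothesis that $\cont$ is not annular then translates into one of: $\chi(\Sigma)<0$; some essential complementary component has $\chi<0$; or a local complication near $\cont$ (it looks locally like a figure eight, a lollipop, etc.) that obstructs annular neighbourhoods. Carrying this out for an \emph{arbitrary} continuum — with no minimality to lean on, so complementary components are not a priori surfaces of finite type — is the most delicate of the purely topological points.

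Next I would reduce to a self-map: $f$ permutes $\Sigma$, its boundary curves and the finitely many non-disk complementary pieces, so some power $g=f^N$ fixes each of them together with its isotopy class, and it is enough to find a periodic point of $g$ on $\cont$. If $\cont$ is inessential, lift to the universal cover $\D$, pick a component $\tilde{\cont}_0$ of $p^{-1}(\cont)$ and a lift $\tilde g$ with $\tilde g(\tilde{\cont}_0)=\tilde{\cont}_0$, and apply the Cartwright--Littlewood theorem (to $\tilde g^{2}$, to fix orientation): it yields a fixed point in the non-separating continuum $\lleno(\tilde{\cont}_0)$, and if that point lies on $\tilde{\cont}_0$ we project down and are done. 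If instead $\Sigma$, or an essential complementary piece, has negative Euler characteristic, a Lefschetz--Nielsen index computation for $g$ on that subsurface — its isotopy class is constrained because the subsurface is filled by an invariant continuum — produces a fixed point of $g$. In either case, however, the fixed point we have found may a priori lie in a complementary disk $W$ of $\cont$ with $g(W)=W$, rather than on $\cont$.

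Ruling this out is where non-wandering is used. For such a $W$, pass to the prime-ends compactification: $g|_W$ induces a homeomorphism of a closed disk whose restriction to the prime-ends circle has a rotation number $\rho$. If $\rho$ is rational then, by the prime-ends version of Cartwright--Littlewood together with the non-wandering hypothesis (the phenomenon isolated by Mather and by Koropecki--Le Calvez--Nassiri), there is a periodic point in the impression of a periodic prime end, hence on $\partial W\subset\cont$ — a contradiction. If $\rho$ is irrational the prime-ends dynamics is rigid (conjugate to an irrational rotation), and combining this with the topological obstruction recorded in the normal-form step and with non-wandering forces $\cont$ to separate exactly the two ends of a neighbourhood of $W$, i.e.\ to be annular there after all — again a contradiction. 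Either way the standing assumption fails, so $\cont$ is annular and the theorem follows.

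The main obstacle is precisely this last interplay between the fixed-point step and the non-wandering hypothesis. Cartwright--Littlewood only ever places a fixed point in $\lleno(\cdot)$, never on $\cont$ directly; and the universal cover $\D$ is non-compact, so the lift $\tilde g$ need not inherit non-wandering even though $g$ does. Consequently one cannot get away with a soft global fixed-point count: the argument has to be routed through the prime-ends rotation numbers of the complementary domains and through the finiteness supplied by the hyperbolic geometry of $S$. Making the topological normal form of the first step precise for a general (possibly wild, possibly separating into infinitely many pieces) continuum is the secondary difficulty.
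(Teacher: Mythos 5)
This theorem is not proved in the paper you were given; it is quoted from Koropecki \cite{koro} and used as a black box in Section \ref{nonwand}. There is therefore no ``paper's own proof'' to compare your sketch against --- the relevant comparison is with the proof in \cite{koro}.

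Judged on its own merits, your sketch has the right skeleton, and it does match the shape of Koropecki's argument: compactify a fixed complementary domain $W$ of $\cont$ by prime ends, look at the induced circle rotation number, and use the non-wandering hypothesis to push a periodic point onto $\partial W\subset\cont$ in the rational case. But neither branch of the sketch is actually closed. In the rational case you appeal to ``the phenomenon isolated by Mather and by Koropecki--Le Calvez--Nassiri''; producing a periodic point of $f$ on $\partial W$ from a rational prime-end rotation number \emph{under only a non-wandering assumption} is precisely the hard new fixed-point lemma that \cite{koro} proves (the Koropecki--Le Calvez--Nassiri work postdates it and builds on it), so here you are invoking the result you need by name rather than deriving it. In the irrational case you assert that rigidity of the prime-end dynamics, ``combined with the topological obstruction'' and non-wandering, ``forces $\cont$ to separate exactly the two ends of a neighbourhood of $W$, i.e.\ to be annular there after all'' --- but no argument is given, and none is apparent: knowing that a single complementary domain has irrational prime-end rotation number says nothing a priori about the global separating behaviour of $\cont$, and the non-wandering hypothesis is not visibly doing any work in that sentence. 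Finally, you lean on the machinery behind Theorem \ref{prel} and Corollaries \ref{anu}, \ref{fini} to get the normal form $\Sigma$, but those results are proved \emph{for minimal sets} and use minimality in an essential way (iterating forward to cover $\mathcal{M}$ by disks, etc.); an aperiodic continuum need not be minimal, and you flag this as a ``secondary difficulty'' without actually resolving it. In short: a plausible road map whose hard steps are named rather than proved.
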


As a consequence, our classification theorem in the non-wandering setting comes down to:

\begin{cor}\label{classnw}

Let $S$ be an oriented surface and $\mathcal{M}\subset S$ a minimal set for $f\in\textrm{Homeo}_{nw}(S)$. Then, either:

\begin{itemize}

\item[$(1^{nw})$] $\mathcal{M}=\cont_0\cup...\cup\cont_{n-1}$ where $\{\cont_i\}_{i=0}^{n-1}$ is a family of pairwise disjoint circloids each containing a connected component of $\partial V$ with $V\in \mathcal{E}(\mathcal{M}^c)$, and $f(\cont_i)=\cont_{i+1}$ $\textrm{mod}(n)$. Further there are no disks in $\Pi_0(\mathcal{M}^c)$;

\item[$(2^{nw})$] $\mathcal{M}$ is an extension of either a periodic orbit or a minimal cantor set.

\end{itemize}

\end{cor}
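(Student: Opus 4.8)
The plan is to feed the trichotomy of Theorem~A into the Koropecki structure theorem just stated. Let $\mathcal{M}\subset S$ be a minimal set for $f\in\textrm{Homeo}_{nw}(S)$. First I would dispose of the case of a fully essential component: if some $V\in\Pi_0(\mathcal{M}^c)$ is fully essential, then by Theorem~A (case 3) $\mathcal{M}$ is a type 3 minimal set, and Proposition~\ref{3} puts us directly in case $(2^{nw})$. So from now on I may assume $\mathcal{M}$ is type 1 or type 2, and in particular every component of $\mathcal{M}^c$ injects at the level of $\pi_1$.

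The key observation is that $\mathcal{M}$ cannot be type 1 in the non-wandering setting. Indeed, suppose $\mathcal{M}$ is type 1. By Lemma~\ref{36} there is an unbounded periodic disk $U\in\Pi_0(\mathcal{M}^c)$; say $f^p(U)=U$. Consider the finite union ${\cal O}=U\cup f(U)\cup\cdots\cup f^{p-1}(U)$, an open invariant set, and look at a fixed component $U$ of it. Since $f$ is non-wandering and $U$ is wandering would contradict non-wandering unless $U$ is periodic (which it is); but the real point is to derive a contradiction with minimality: one shows $\partial U$ is a proper closed invariant subset of $\mathcal{M}$ when $\mathcal{M}$ is not connected, and when $\mathcal{M}$ is connected $\mathcal{M}=\partial U$ by Lemma~\ref{per}, so $\mathcal{M}$ is an aperiodic (minimal sets have no periodic points unless they are a single periodic orbit, which a type 1 set is not) $f^p$-invariant continuum. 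Applying Koropecki's theorem to $\mathcal{M}$ and $f^p$ exhibits $\mathcal{M}$ as an intersection of a decreasing sequence of annuli, hence $\mathcal{M}$ is an annular continuum; but then $S\setminus\mathcal{M}$ has an essential component, contradicting that $\mathcal{M}$ is type 1 (all components of $\mathcal{M}^c$ are disks). This rules out type 1 entirely, which also yields the last clause of $(1^{nw})$: there are no disks in $\Pi_0(\mathcal{M}^c)$, because a type 2 minimal set with all complementary components disks would be type 1.

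It remains to treat the type 2 case and upgrade the continua $\cont_i$ from ``boundary of a circloid'' (Theorem~B) to genuine circloids. By Proposition~\ref{cont} we have $\mathcal{M}=\cont_0\cup\cdots\cup\cont_{n-1}$ with the $\cont_i$ pairwise disjoint continua permuted cyclically by $f$, each $\cont_i$ containing a component of $\partial V$ for some essential $V\in\mathcal{E}(\mathcal{M}^c)$. Fix $\cont=\cont_k$; it is $f^n$-invariant, and it is aperiodic because $\mathcal{M}$ minimal and not a periodic orbit forces $\cont$ to contain no periodic point of $f^n$. Koropecki's theorem applies to $\cont$ and $f^n$: $\cont$ is an intersection of a decreasing sequence of annuli, hence an annular continuum, and $f^n|_\cont$ is minimal, so by Lemma~\ref{circmin} $\cont$ is the boundary of a circloid. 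To get that $\cont$ \emph{is} a circloid, I would argue that an annular continuum that is also an intersection of a decreasing nested sequence of (essential) annuli and is minimal must be circloid-irreducible: if it strictly contained a smaller continuum $\cont'$ still satisfying the annular condition (i), then the nested-annuli description would give $\cont\subset\bigcap A_m$ with $\cont'$ also inside, and minimality of $f^n$ on $\cont$ together with $f^n$-invariance of $\overline{\cont'}$'s filled version would contradict $\cont\neq\cont'$; more directly, in the non-wandering case the two complementary ``sides'' of the annulus forces $\cont$ to equal its own circloid (this is the place where Theorem~B only gave ``boundary of a circloid'' because in general one side could be a wandering disk, while here non-wandering kills that). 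Thus each $\cont_i$ is a circloid, giving $(1^{nw})$.

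The main obstacle I expect is precisely this last upgrade: carefully proving that a minimal, non-wandering, aperiodic annular continuum must be a full circloid rather than merely the boundary of one. The subtlety is that Koropecki's theorem gives nested annuli but not automatically that $\cont$ is irreducible for property (i); one must combine the nested-annulus structure, minimality of $f^n|_\cont$, and non-wandering to exclude a strictly smaller annular sub-continuum — ruling out the ``pseudo-circle with an attached wandering disk on one side'' type phenomenon that Theorem~B tolerates in the general case.
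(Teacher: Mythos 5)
Your route is genuinely different from the paper's in the type~2 case. The paper first invokes the dichotomy $(2i)$ vs.\ $(2ii)$ of Theorem~B: it rules out $(2ii)$ by arguing that the reduced map $\hat g$ on $S'$ is non-wandering and that $\cont_0'$ would then be a type~1 minimal set for $\hat g$ on $S'$, which Koropecki forbids; it then handles $(2i)$ by the interior-point/wandering-disk argument. You bypass Theorem~B altogether, applying Koropecki directly to each continuum $\cont_k$ as an $f^n$-invariant aperiodic continuum: since $\cont_k$ cannot be contained in a disk (else $\mathcal M$ would be fully essential, hence type~3), the nested sequence of annuli is essential, so $\cont_k$ is an annular continuum, and Lemma~\ref{circmin} gives that $\cont_k$ is the boundary of a circloid. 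Your route is conceptually shorter and sidesteps the fact --- asserted but not proved in the paper --- that $\hat g$ is non-wandering. This is a legitimate alternative, so it is worth recording.

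However, there are two gaps, both fixable by the same tool. First, the circloid upgrade, which you correctly flag as the crux but do not close. The clean argument, which is also what the paper uses, is this: the circloid $\cont_k$ contains the minimal-set component $\partial\cont_k$, so any interior point of $\cont_k$ lies in a connected component of $\mathcal M^c$; that component is a bounded disk, and for a type~2 minimal set bounded disks in $\Pi_0(\mathcal M^c)$ are wandering by Lemma~\ref{35}. An open wandering set is incompatible with $f$ being non-wandering, so $\cont_k$ has empty interior and $\cont_k=\partial\cont_k$ is itself a circloid. Your text about the nested-annuli description and ``forcing $\cont$ to equal its own circloid'' does not supply this; it circles the issue without landing the contradiction. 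Second, the clause ``there are no disks in $\Pi_0(\mathcal M^c)$'': your justification (``a type 2 minimal set with all complementary components disks would be type 1'') does not address the point, since one must exclude \emph{any} disk among the complementary components, not the degenerate situation where \emph{all} are disks. Again Lemma~\ref{35} plus non-wandering does it in one line.

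A smaller remark: your handling of the type~1 exclusion mixes up cases. The clause ``$U$ is wandering would contradict non-wandering unless $U$ is periodic'' is not the right shape of argument, and the disconnected case should be dealt with by passing to a connected component $C\in\Pi_0(\mathcal M)$, which is $f^p$-periodic and $f^p$-minimal; $C$ is aperiodic, and Koropecki applied to $C$ and $f^p$ shows $C$ is a nested intersection of annuli, forcing a non-simply-connected component of $\mathcal M^c$, contradicting type~1. As written you only complete the connected case.
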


\begin{proof}

We first want to see that type 1 and type 2ii minimals sets can not occur for $f\in\textrm{Homeo}_{nw}(S)$. If $S'$ is an oriented surface and $\mathcal{M}\subset S'$ is a type 1 minimal set for some homeomorphism $g\in\textrm{Homeo}(S')$, then $\mathcal{M}$ is an aperiodic continuum which is not a decreasing intersection of annuli. Hence, due to the result above, type 1 minimal sets can not occur for $g\in\textrm{Homeo}_{nw}(S')$.

This implies that type 1 minimal sets can not occur for $f\in\textrm{Homeo}_{nw}(S)$. Suppose for a contradiction that
\\$\mathcal{M}=\cont_0\cup...\cup \cont_{n-1}$ is a minimal set of $f\in\textrm{Homeo}_{nw}(S)$ in the case 2ii of Theorem B. The map $\hat{g}$ associated by Theorem B to $\cont_0$, can be consider by construction non-wandering (we do not give a proof, but this fact is well known). On the other hand $\cont_0'$ is a type 1 minimal set for $\hat{g}$, which is absurd.

So, now we consider $\mathcal{M}$ as in 2i of Theorem B. We know that
\\$\mathcal{M}=\partial\cont_0\cup...\cup \partial\cont_{n-1}$ where $\{\cont_i\}_{i=0}^{N-1}$ is a family of pairwise disjoint circloids. Further, by definition of circloid we know that the existence of an interior point $x$ for some $\cont_i$ would imply the existence of a disks in $\Pi_0(\mathcal{M}^c)$, which has to be wandering. Hence due to $f$ be non-wandering we have  that $\partial\cont_i=\cont_i$ for every $i=0...N-1$, which finish the proof.

\end{proof}

We point out the slight difference between the cases $(1^{nw})$ of last theorem and $(2i)$ of Theorem B. In the non-wandering case we have that the continua are circloids, while in the general case we only know that they are the boundary of a circloid.

\end{section}

\begin{section}{Classification and Nielsen-Thurston theory.}\label{nt}

In this section we study the given classification of minimal sets in light of Nielsen-Thurston theory (see \cite{Thurston}). We will see that type 1 minimal sets can only occur in the homotopy class of either periodic homeomorphisms or reducible maps without pseudo-Anosov components. This will trivially imply for elements in the homotopy class of pseudo-Anosov homeomorphisms, that only type 3 minimal sets can occur (which generalizes the analogous result given in \cite{Kwajapa}). Let us first introduce some classical notions of this theory (see \cite{Boyland1}).

Consider a metric $d$ in a space $\mathcal{X}$ and $f:\mathcal{X}\rightarrow\mathcal{X}$ bijective. We say that $f$ is expansive with respect $d$ if there exists $\alpha>0$ such that
\\$\sup\{d(f^n(x),f^n(y)):n\in\Z\}>\alpha$ for any $x,y\in\mathcal{X}$. Given now $f:\mathcal{X}\rightarrow\mathcal{X}$ and $g:\mathcal{X}\rightarrow\mathcal{X}$ bijective functions we say that the pairs $(x,f)$ and $(y,g)$ globally  shadows with respect $d$, if there exists $K>0$ such that $d(f^n(x),g^n(y))<K$ for every $n\in\Z$. Is easy to see that this define an equivalence relation between the pairs.

Consider a hyperbolic surface $S$ and two homeomorphisms $f,\phi\in\homeo$ which are homotopic . Further fix $\tilde{f},\tilde{\phi}$ equivariantly homotopic lifts of $f,\phi$. We say for $x,y\in S$ that $(x,\tilde{f})$ and $(y,\tilde{\phi})$ globally shadows if $(\tilde{x},\tilde{f})$ and $(\tilde{y},\tilde{\phi})$ globally shadows with respect the hyperbolic metric, where $p(\tilde{x})=x$ and $p(\tilde{y})=y$.

We denote by $\textrm{Homeo}_+(S)$ the set of orientation preserving homeomorphisms in $S$ and by $\rho$ the hyperbolic metric of $S$. In light of the Nielsen-Thurston theory we define:

$$\mathcal{A}=\{f\in\textrm{Homeo}_+(S)|\ f\sim g\ :\ g \mbox{ is either pA or reducible with pA components}\},$$

The following result can be found in \cite{Boyland2} (Theorem 3.2).

\begin{teo}\label{Boy}

Suppose $S$ hyperbolic surface and $f\in\mathcal{A}$. Then there exists $\phi\in\textrm{Homeo}_+(S)$ and a non-empty $\phi$-invariant and compact surface $S'\subset S$ (possibly with boundary and non-connected) such that:

\begin{itemize}

\item[(1)] the boundary of $S'$ is either empty or union of essential curves;

\item[(2)] $\phi$ is homotopic to $f$ and $\phi|_{\textrm{int}(S')}$ is expansive;

\item[(3)] There exist a compact $f$-invariant set $\mathcal{Y}\subset S$, a continuous homotopic to the projection map $\alpha:\mathcal{Y}\rightarrow S /_{\partial S'}$, and two equivariantly homotopic lifts $\tilde{f},\tilde{\phi}$ of $f,\phi$ verifying:

\begin{itemize}

\item[(3a)] $\textrm{Im}(\alpha)=S' /_{\partial S'}$ and $\alpha\circ f|_{\mathcal{Y}}=\phi_q\circ\alpha$, where $\phi_q:S'/_{\partial S'}\rightarrow S'/_{\partial S'}$ is the induced map by $\phi$;

\item[(3b)] if $x,x'\in\mathcal{Y}$ where $(x,\tilde{f})$ and $(x',\tilde{f})$ globally shadows, then $\alpha(x)=\alpha(x')$.

\end{itemize}

\end{itemize}

\end{teo}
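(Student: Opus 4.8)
\emph{Setup via Nielsen--Thurston; item~(1).} The plan is to derive Theorem~\ref{Boy} from the Nielsen--Thurston classification together with a global shadowing theorem of Handel type, establishing items~(1), (2) and~(3) essentially one at a time. Since $f\in\mathcal{A}$, the Nielsen--Thurston classification (\cite{Thurston}) provides $\phi\in\textrm{Homeo}_+(S)$ homotopic to $f$ in Thurston canonical form: a $\phi$-invariant finite system $\Gamma$ of pairwise disjoint essential simple closed curves such that $\phi$ permutes the components of $S\setminus\Gamma$ and on each the first return is isotopic to a periodic or to a pseudo-Anosov map. Take $S'$ to be the closure of the union of the pseudo-Anosov components, isotoped so that $\phi(S')=S'$; then $\partial S'\subset\Gamma$ is a union of essential curves, which is~(1), and $\phi$ induces a homeomorphism $\phi_q$ of $S'/_{\partial S'}$. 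The case $\Gamma=\emptyset$, $S'=S$ (pure pseudo-Anosov) is permitted.

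\emph{Item~(2), and the ingredient for~(3b).} Each pseudo-Anosov component carries transverse singular measured foliations $(\mathcal F^s,\mathcal F^u)$ whose transverse measures $\phi$ multiplies by $\lambda^{-1}$ and $\lambda$, with $\lambda>1$. Equip $\textrm{int}(S')$ with the associated singular flat metric, which on a compact core is quasi-isometric to the hyperbolic metric. If $x\neq y$ in $\textrm{int}(S')$, then the $\mathcal F^s$- or the $\mathcal F^u$-transverse measure of an arc joining them is positive, so the distance between $\phi^n(x)$ and $\phi^n(y)$ grows like $\lambda^{|n|}$ as $n\to+\infty$ or $n\to-\infty$; hence $\phi|_{\textrm{int}(S')}$ is expansive, which is~(2). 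The same estimate shows $\phi_q$ is expansive on $S'/_{\partial S'}$ for a metric in which the collapsing map $S'\to S'/_{\partial S'}$ is distance non-increasing outside fixed collars of $\partial S'$; this is the precise form of expansivity used below.

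\emph{Item~(3): the shadowing set and the semiconjugacy.} Lift everything to $\D$ with the equivariantly homotopic lifts $\tilde f,\tilde\phi$ of $f,\phi$; since these induce the same automorphism of the deck group $G$, the map $g:=\tilde\phi^{-1}\tilde f$ commutes with $G$, so $x\mapsto d_{\D}(g(x),x)$ is $G$-invariant and therefore bounded by its supremum over a compact fundamental domain. Define
$$\mathcal Y=\bigl\{\,x\in S:\ \exists\ \tilde x\in p^{-1}(x),\ \tilde z\in\D\ \text{ with }\ \textstyle\sup_{n\in\Z} d_{\D}\bigl(\tilde f^{\,n}(\tilde x),\tilde\phi^{\,n}(\tilde z)\bigr)<\infty \,\bigr\}.$$
A compactness argument (the bound may be taken uniform over $\mathcal Y$ using $G$-invariance) shows $\mathcal Y$ is compact and $f$-invariant. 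For $x\in\mathcal Y$, by expansivity of $\phi_q$ the shadowed $\tilde\phi$-orbit is unique up to $G$ and up to the $p$-preimage of $\partial S'$; projecting it to $S'/_{\partial S'}$ defines $\alpha(x)$. One checks that $\alpha$ is continuous and that $\alpha\circ f|_{\mathcal Y}=\phi_q\circ\alpha$, which is~(3a), and that if $(x,\tilde f)$ and $(x',\tilde f)$ globally shadow then the shadowed $\tilde\phi$-orbits stay a bounded distance apart, hence $\alpha(x)=\alpha(x')$, which is~(3b); that $\alpha$ is homotopic to the collapse $S\to S/_{\partial S'}$ follows from the equivariant homotopy between $\tilde f$ and $\tilde\phi$.

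\emph{The main obstacle.} What remains, and is the genuinely hard part, is $\textrm{Im}(\alpha)=S'/_{\partial S'}$: every $\phi$-orbit inside a pseudo-Anosov piece is globally shadowed by some $f$-orbit. In the pure pseudo-Anosov case this is Handel's global shadowing theorem, and the reducible case is Boyland's extension (\cite{Boyland2}); the proof is a fixed-point/Lefschetz argument on $\D$. Given a lift $\tilde z$ of a point in the pseudo-Anosov part, one builds from $\mathcal F^s,\mathcal F^u$ a bi-infinite sequence of ``rectangles'' around the points $\tilde\phi^k(\tilde z)$; the attracting--repelling behaviour of a high power of $\tilde\phi$ across these rectangles, combined with the fact that $\tilde f$ differs from $\tilde\phi$ by the $G$-bounded map $g$, forces via a Brouwer-type intersection argument a long piece of $\tilde f$-orbit threading all the rectangles, and a diagonal limit produces the bi-infinite shadowing orbit. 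The reducible case requires in addition checking that collapsing the periodic pieces of $S$ to points does not obstruct the semiconjugacy --- periodic pieces carry only periodic behaviour, which is either shadowed trivially or crushed by the collapse --- and this is exactly where the hypothesis $f\in\mathcal A$ is used. Everything outside this surjectivity step I expect to be classical or routine.
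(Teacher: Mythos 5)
The paper does not prove this statement at all: Theorem \ref{Boy} is imported verbatim from Boyland (\cite{Boyland2}, Theorem 3.2), with a one-line citation in place of a proof. So there is no internal argument to measure you against; the relevant comparison is with the Handel--Boyland proof itself, and your outline does follow its architecture (Thurston canonical form, $S'$ the union of pseudo-Anosov pieces, expansivity from the invariant measured foliations, a shadowing set $\mathcal{Y}$ and a semiconjugacy $\alpha$ onto $S'/_{\partial S'}$). You also correctly identify the surjectivity $\textrm{Im}(\alpha)=S'/_{\partial S'}$ as the real content. But as a proof attempt the write-up has genuine gaps, and the largest one is precisely that step: you do not prove it, you re-cite Handel and \cite{Boyland2} and gesture at ``rectangles'' and a Brouwer-type argument. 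Since Boyland's Theorem 3.2 already contains items (1)--(3) in full, deferring to it there makes the rest of your construction redundant rather than a proof.

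Two of the ``routine'' steps are also not routine as stated. First, compactness of your $\mathcal{Y}$: the $G$-invariance of $x\mapsto d_{\D}(\tilde\phi^{-1}\tilde f(x),x)$ only bounds the one-step discrepancy between $\tilde f$ and $\tilde\phi$; it does not make the global shadowing constant uniform over $\mathcal{Y}$, and without a uniform constant a limit of points shadowing with constants $K_n\to\infty$ need not shadow, so closedness fails. The uniform constant is part of the conclusion of Handel's theorem, so you cannot use it to set up $\mathcal{Y}$ before invoking that theorem. Second, the definition of $\alpha(x)$ via ``uniqueness of the shadowed $\tilde\phi$-orbit up to $G$ and up to $p^{-1}(\partial S')$'' needs an actual argument: expansivity is only available on $\textrm{int}(S')$ (and with a constant, whereas shadowing only gives a bounded --- possibly much larger --- distance), and two shadowed $\tilde\phi$-orbits may lie in or near the periodic pieces, where there is no expansivity at all; one must show that this ambiguity is exactly absorbed by the collapse $S'\to S'/_{\partial S'}$. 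Continuity of $\alpha$ and the claim that $\alpha$ is homotopic to the projection are likewise asserted, not proved. In short: your route is the right one and mirrors the source the paper cites, but as a standalone proof the decisive surjectivity step and the quantitative shadowing estimates behind the definition of $\mathcal{Y}$ and $\alpha$ are missing.
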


The properties of the function $\alpha$ allow us to state the following Lemma.

\begin{lema}\label{uncollpase}

Consider $\alpha:\mathcal{Y}\rightarrow S/_{\partial S'}$ given by Theorem \ref{Boy} and assume that $\mathcal{M}\subset S$ is a type 1 minimal set of $f$. Then $\mathcal{Y}\subset \mathcal{M}^c$.

\end{lema}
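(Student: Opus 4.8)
The plan is to argue by contradiction: suppose there exists a point $y\in\mathcal{Y}\cap\mathcal{M}$. Since $\mathcal{Y}$ is compact and $f$-invariant, and $\mathcal{M}$ is closed, the set $\mathcal{Y}\cap\mathcal{M}$ is a nonempty compact $f$-invariant subset of $\mathcal{M}$; by minimality of $\mathcal{M}$ this forces $\mathcal{M}\subset\mathcal{Y}$. Now I would transport the dynamics to the quotient: by item (3a) of Theorem \ref{Boy}, the map $\alpha:\mathcal{Y}\to S'/_{\partial S'}$ is continuous, homotopic to the projection, and semiconjugates $f|_{\mathcal{Y}}$ to $\phi_q$. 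Restricting to $\mathcal{M}$, the image $\alpha(\mathcal{M})$ is a nonempty compact $\phi_q$-invariant subset of $S'/_{\partial S'}$. The key point is then to understand the fibers $\alpha^{-1}(z)\cap\mathcal{M}$ and derive a contradiction with $\mathcal{M}$ being of type 1.

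The heart of the argument should run as follows. First, by item (2), $\phi|_{\mathrm{int}(S')}$ is expansive, hence $\phi_q$ acting on (the relevant part of) $S'/_{\partial S'}$ is expansive with respect to the induced metric; expansive homeomorphisms of compact surfaces have no ``wandering-free disks'' collapsing, but more to the point, one wants: whenever $\alpha(x)=\alpha(x')$ for $x,x'\in\mathcal{M}$, the pair $(x,\tilde f)$ and $(x',\tilde f)$ globally shadow — this is essentially the converse direction hinted at by (3b), which together with expansiveness of $\phi_q$ shows that a whole fiber $\alpha^{-1}(z)\cap\mathcal{M}$ consists of mutually globally-shadowing points, and (using that $\alpha$ is homotopic to the projection, so lifts compatibly) that each such fiber lifts to a set of uniformly bounded diameter in $\widetilde S$. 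Consequently $h:=\alpha$ collapses $\mathcal{M}$ onto $\mathcal{M}':=\alpha(\mathcal{M})$ with connected, ``bounded'' fibers, exhibiting $\mathcal{M}$ as an extension (in the sense of the Introduction) of the minimal set $\mathcal{M}'$ for $\phi_q$. But the complement of $\mathcal{M}'$ in the closed surface $S'/_{\partial S'}$, for an expansive $\phi_q$, is not a union of disks; more directly, an extension of $\mathcal{M}'$ would make $\mathcal{M}$ either type 2 or type 3 according to whether $\mathcal{M}'$ is essential or a Cantor set / periodic orbit — in every case not type 1. This contradicts the hypothesis, so $\mathcal{Y}\cap\mathcal{M}=\emptyset$, i.e. $\mathcal{Y}\subset\mathcal{M}^c$.

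I expect the main obstacle to be the fiber-control step: showing that $\alpha$ restricted to $\mathcal{M}$ has fibers that are connected and that lift to bounded sets, so that one genuinely lands in the ``extension'' framework and can invoke the type-1/type-2/type-3 trichotomy of Theorem \ref{teoA}. This requires carefully combining the global-shadowing hypothesis (3b), the expansiveness in (2), and the fact that $\alpha$ is homotopic to the collapsing projection $S\to S/_{\partial S'}$ (so that choosing equivariant lifts $\tilde f,\tilde\phi$ as in (3), the semiconjugacy lifts to $\tilde\alpha:\tilde{\mathcal Y}\to\widetilde{S'/_{\partial S'}}$ and bounded global-shadowing distance translates into bounded fiber diameter upstairs). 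Once this is in place, the rest is a clean bookkeeping argument: a type 1 minimal set has all complementary components disks with an unbounded periodic one, which is incompatible with collapsing nontrivial bounded fibers onto a minimal set of an expansive map. A secondary, more routine point to check is that $\alpha(\mathcal{M})$ is actually all of $S'/_{\partial S'}$ or at least essential/nontrivial in it; this follows because $\mathrm{Im}(\alpha)=S'/_{\partial S'}$ by (3a) and $\alpha$ is homotopic to a surjection, so the image cannot be collapsed into a disk.
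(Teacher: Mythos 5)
Your approach diverges genuinely from the paper's, and it contains a gap you yourself identify but do not close. Your plan is to show that the $\alpha$-fibers inside $\mathcal{M}$ consist of mutually globally-shadowing points, conclude that $\alpha$ exhibits $\mathcal{M}$ as an ``extension'' of $\mathcal{M}'=\alpha(\mathcal{M})$ in the sense of the Introduction, and then invoke the trichotomy. The problem is that the step you need is the \emph{converse} of item (3b): (3b) says globally-shadowing orbits have the same $\alpha$-image, not that orbits with the same $\alpha$-image globally shadow. You claim expansiveness of $\phi$ on $\mathrm{int}(S')$ supplies this converse, but on the collapsed quotient $S'/_{\partial S'}$ (where $\partial S'$ has been crushed to $P_0$ and expansiveness is only asserted on $\mathrm{int}(S')$) this is not automatic, and in any case it gives you no control on connectedness of fibers nor on boundedness of their lifts --- both of which are baked into the definition of ``extension'' and which you flag as the ``main obstacle''. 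So the extension framework is not earned, and the final step (type 1 is incompatible with being an extension) is not established either.

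The paper's proof is shorter and bypasses expansiveness and shadowing entirely. After deriving $\mathcal{M}\subset\mathcal{Y}$ as you do, it uses item (1) of Theorem \ref{Boy}: $\partial S'$ is a union of essential simple closed curves. Because $\mathcal{M}$ is type 1, every complementary component is a disk, so no essential curve can avoid $\mathcal{M}$; hence $\mathcal{M}\cap\partial S'\neq\emptyset$. It follows that $P_0=q(\partial S')$ lies in $\mathcal{M}'=\alpha(\mathcal{M})$; since $P_0$ is $\phi_q$-fixed and $\mathcal{M}'$ is minimal for $\phi_q$, $\mathcal{M}'=\{P_0\}$. But $(S'/_{\partial S'})\setminus\{P_0\}$ is a union of open surfaces with nontrivial fundamental group, while $q(\mathcal{M})^c$ is a union of disks (again because $\mathcal{M}$ is type 1), so $\alpha$ cannot be homotopic to the projection $q$ --- contradicting item (3). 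The ingredient you are missing is precisely this use of essentiality of $\partial S'$ to pin $\mathcal{M}'$ to the single fixed point $P_0$, after which the contradiction is purely homotopical and does not require any fiber analysis.
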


\begin{proof}

Suppose for a contradiction that $\mathcal{Y}\cap\mathcal{M}\neq\emptyset$. Thus because of the minimality of $\mathcal{M}$ we have that $\mathcal{M}\subset \mathcal{Y}$. This implies, since $\alpha$ semiconjugates
$f|_{\mathcal{Y}}$ with $\phi_q$, that $\mathcal{M}'=\alpha(\mathcal{M})$ is a minimal set of $\phi_q$. Furthermore due to $\mathcal{M}$ be a type 1 minimal set of $f$ and point (1) in the last Theorem, we have necessarily that
$\mathcal{M}\cap\partial S'\neq\emptyset$. Thus if $q:S'\rightarrow S'/_{\partial S'}$ is the quotient map we have that $\mathcal{M}'\supset\{P_0\}$, where $\{P_0\}=q(\partial S')$. Therefore since $P_0$ is fixed by $\phi_q$ we have that $\mathcal{M}'=\{P_0\}$.

Now, by Theorem \ref{Boy} we have that $S/_{\partial S'}\setminus \{P_0\}$ is a union of open surfaces non trivial fundamental group (see \cite{Boyland2}), while, since $\mathcal{M}$ is a type 1 minimal set, $q(\mathcal{M})^c$ is a union of disks. This implies that $\alpha$ can not be homotopic to $q$, which is absurd.

\end{proof}

We now state the non-existence of type 1 minimal sets for elements in $\mathcal{A}$.

\begin{teo}\label{classnielthu}

If $\mathcal{M}\subset S$ is a type 1 minimal set of $f\in\homeo$, then $f$ is not in $\mathcal{A}$.

\end{teo}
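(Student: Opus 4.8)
The plan is to derive a contradiction from the assumption that $f \in \mathcal{A}$ carries a type 1 minimal set $\mathcal{M}$, by exploiting the uncollapsing established in Lemma \ref{uncollpase} together with the structure theorem (Theorem \ref{Boy}) and the results of Section \ref{homotopyofcomp} on the topology of the components of $\mathcal{M}^c$. First I would invoke Theorem \ref{Boy} to obtain $\phi$, the $\phi$-invariant compact surface $S' \subset S$ (with boundary a union of essential curves, and $\phi|_{\operatorname{int}(S')}$ expansive), the $f$-invariant compact set $\mathcal{Y}$, the semiconjugacy $\alpha:\mathcal{Y}\to S/_{\partial S'}$ homotopic to the projection $q:S\to S/_{\partial S'}$, and the equivariantly homotopic lifts $\tilde f,\tilde\phi$. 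By Lemma \ref{uncollpase} we know $\mathcal{Y}\subset \mathcal{M}^c$, so $\mathcal{M}$ lies entirely in the complement of $\mathcal{Y}$.

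The key step is to confront this with the fact that, since $f \in \mathcal{A}$, some power or the map itself has a periodic point, and more importantly, the lift-level dynamics on $S' / _{\partial S'}$ given by $\phi_q$ has rich recurrence (expansivity on the interior forces, for instance, the existence of a minimal set for $\phi_q$ meeting $\operatorname{int}(S')$, hence contained in $\operatorname{Im}(\alpha)$). I would then pull such a minimal set back: since $\alpha$ is a continuous surjection onto $S'/_{\partial S'}$ and $\alpha\circ f|_{\mathcal{Y}} = \phi_q \circ \alpha$, the preimage under $\alpha$ of a $\phi_q$-minimal set contains an $f$-minimal set $\mathcal{M}_0 \subset \mathcal{Y} \subset \mathcal{M}^c$. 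But $\mathcal{M}$ is the unique minimal set in its own orbit closure and, more to the point, $\mathcal{M}_0$ is an $f$-invariant compact set disjoint from $\mathcal{M}$; this already means $\mathcal{M}^c$ contains a nonempty compact invariant set. Now I use the type 1 hypothesis: every $U\in\Pi_0(\mathcal{M}^c)$ is a disk, and by Lemma \ref{35} the bounded ones are wandering, so $\mathcal{M}_0$ must be contained in the finitely many unbounded periodic disks furnished by Lemma \ref{36}. The contradiction will come from comparing the homotopical size of these disks with that forced by the uncollapsing: $\alpha$ homotopic to $q$ means that $\alpha$ restricted to $\mathcal{Y}$, and in particular to $\mathcal{M}_0$, must capture all the homotopy that $q$ sees on $\operatorname{int}(S')$ (which, since $\partial S'$ consists of essential curves and $S'$ carries the pseudo-Anosov or reducible-with-pA part, is substantial — $S'/_{\partial S'}$ has nontrivial fundamental group on each component), whereas $\mathcal{M}_0$ sits inside unbounded disks in $S$, forcing $\alpha|_{\mathcal{M}_0}$ to factor homotopically through contractible pieces. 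This is incompatible with $\alpha\simeq q$, exactly the mechanism already used at the end of the proof of Lemma \ref{uncollpase}.

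Concretely, the argument I would write runs: (1) apply Theorem \ref{Boy} and Lemma \ref{uncollpase} to get $\mathcal{Y}\subset\mathcal{M}^c$; (2) show $\mathcal{Y}\neq\emptyset$ intersects $\operatorname{int}(S')$-pulled-back dynamics nontrivially, so there is an $f$-minimal $\mathcal{M}_0\subset\mathcal{Y}$ with $\alpha(\mathcal{M}_0)$ a $\phi_q$-minimal set meeting $\operatorname{int}(S'/_{\partial S'})$; (3) since $\mathcal{M}$ is type 1, $\mathcal{M}_0$ lies in the union of the finitely many unbounded periodic disks of $\Pi_0(\mathcal{M}^c)$ given by Lemmas \ref{35}–\ref{36}, each of which is simply connected in $S$; (4) derive that $\alpha$ maps a neighborhood of $\mathcal{M}_0$ in a way that kills fundamental group, contradicting the homotopy $\alpha\simeq q$ and the fact that $q$ restricted to that region sees the essential topology of $S'$.

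\smallskip

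The main obstacle I anticipate is step (2)–(3): one must be careful that the $f$-minimal set $\mathcal{M}_0$ pulled back through $\alpha$ genuinely lands inside the \emph{unbounded} disks (a priori it could scatter over bounded wandering disks, which is impossible for a minimal set by Lemma \ref{35}, so in fact it must land in the finitely many unbounded periodic disks — but making the homotopical comparison rigorous requires tracking how $\alpha$ behaves on the frontier of those disks, i.e. on points of $\mathcal{M}$ itself, which are not in the domain $\mathcal{Y}$ of $\alpha$). Handling this cleanly will likely require reproducing the frontier-continuity argument from Lemma \ref{uncollpase} in the slightly more delicate setting where $\mathcal{M}\cap\partial S'$ need not be all of $\mathcal{M}$; the payoff is the same contradiction with $\alpha\simeq q$.
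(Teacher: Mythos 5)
Your opening (invoking Theorem \ref{Boy}, then Lemma \ref{uncollpase} to get $\mathcal{Y}\subset\mathcal{M}^c$) matches the paper, but from there the paper takes a different and, I think, unavoidable route, and the homotopy argument you sketch in steps (2)--(4) has a real gap.

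The issue is precisely in step (4). Once Lemma \ref{uncollpase} is applied, $\mathcal{Y}$ (and a fortiori any $f$-minimal $\mathcal{M}_0\subset\mathcal{Y}$) sits entirely inside disks of $\Pi_0(\mathcal{M}^c)$, so $\mathcal{Y}$ carries \emph{no} nontrivial loop in $S$: every loop in $\mathcal{Y}$ is inessential. This means $q|_{\mathcal{Y}}$ already factors through contractible pieces just as much as $\alpha$ would, and the homotopy relation $\alpha\simeq q$ (a relation between maps \emph{out of} $\mathcal{Y}$) imposes essentially no constraint on the image of $\alpha$. You cannot ``rerun the mechanism of Lemma \ref{uncollpase}'' here, because that mechanism required the opposite inclusion, $\mathcal{M}\subset\mathcal{Y}$, so that $q(\mathcal{M})$ itself was homotopically large (complement a union of disks) while $\alpha(\mathcal{M})=\{P_0\}$; the asymmetry came from the source carrying topology. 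With $\mathcal{Y}\subset\mathcal{M}^c$ the source is homotopically empty and no contradiction is produced. Your own ``main obstacle'' paragraph gestures at a difficulty, but diagnoses it as a frontier-continuity issue; the real problem is structural: a map from a simply-connected source into $S'/_{\partial S'}$ is not constrained up to homotopy by its image.

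What the paper actually uses is item (3b) of Theorem \ref{Boy}, the global-shadowing property of $\alpha$, not the homotopy $\alpha\simeq q$. Cover $\mathcal{Y}$ by finitely many pairwise disjoint closed disks $D_0,\ldots,D_{N-1}$, each contained in some $U_i\in\Pi_0(\mathcal{M}^c)$, compatible with the $f$-action on $\mathcal{Y}$. One then shows that any two points of $\mathcal{Y}$ in the same $D_i$ globally shadow under $\tilde f$: letting $R$ be the maximal diameter of the lifts $\tilde D_i$, if the orbits separated by more than $R$ then some $\tilde f^k(\tilde D_i)$ would connect two distinct lifts $\tilde D_k$ and $\sigma(\tilde D_k)$ ($\sigma\in G\setminus\{e\}$), forcing the component of $\mathcal{M}^c$ containing $p(\tilde D_k\cup\tilde f^k(\tilde D_i)\cup\sigma(\tilde D_k))$ to be essential, contradicting that $\mathcal{M}$ is type 1. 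Property (3b) then identifies all points of $\mathcal{Y}$ in the same $D_i$ under $\alpha$, so $\operatorname{Im}(\alpha)$ has at most $N$ points, contradicting $\operatorname{Im}(\alpha)=S'/_{\partial S'}$. This is where the disk-structure of $\mathcal{M}^c$ is made to bite, and it is quantitatively sharper than anything a homotopy argument could extract from a simply-connected $\mathcal{Y}$.

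Your steps (2)--(3) (pulling back a $\phi_q$-minimal set and placing it in unbounded periodic disks via Lemmas \ref{35}--\ref{36}) are not wrong, but they are a detour: the contradiction never needs a minimal set inside $\mathcal{Y}$, only the finiteness of $\operatorname{Im}(\alpha)$. I would replace step (4) by the shadowing argument above.
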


\begin{proof}

Assume that $f$ preserves orientation. For the complementary case, one just apply the proof we do to $f^2$. We suppose for a contradiction that $f\in\mathcal{A}$. Let us consider $S'$, $\phi$ and $\alpha:\mathcal{Y}\rightarrow S/_{\partial S'}$ given by Theorem \ref{Boy}. Due to Lemma \ref{uncollpase} we have that $\mathcal{Y}\subset \mathcal{M}^c$. Then we have that $\mathcal{Y}\subset \mathcal{M}^c$, which implies the existence of a family of pairwise disjoint closed disks $\{D_i\}_{i=0}^{N-1}$ and of pairwise disjoint elements of $\Pi_0(\mathcal{M}^c)$, $\{U_i\}_{i=0}^{N-1}$, such that:

\begin{itemize}

\item[(i)] $D_i\subset U_i$ for every $i=0...N-1$;

\item[(ii)] $\mathcal{Y}\subset \bigcup_{i=0}^{n-1}D_i$;

\item[(iii)] $f(\mathcal{Y}_i)=\mathcal{Y}_{i+1}$ mod($N$), where $\mathcal{Y}_i=\mathcal{Y}\cap D_i$.

\end{itemize}

Let us now consider the two lifts $\tilde{f},\tilde{\phi}$ of $f,\phi$ given by Theorem \ref{Boy}. We claim that if $x,y\in \mathcal{Y}\cap \mathcal{Y}_i$ for some $i=0...N-1$ then $(x,\tilde{f})$ and $(y,\tilde{f})$ globally shadows.

In fact for $x,y$ consider two lifts $\tilde{x},\tilde{y}\in \tilde{D}_i$ where $\tilde{D}_i$ is a connected component of $p^{-1}(D_i)$, and let $R=\max\{\textrm{diam}(D_i):i=0...N-1\}$. Then we have that $\sup\{d(\tilde{f}^n(\tilde{x}),\tilde{f}(\tilde{y})):n\in\Z\}\leq R$. Otherwise by definition of $R$ and the point (iii) we have that $\tilde{f}^k(\tilde{D}_i)\cap \tilde{D}_k\neq\phi$ and $\tilde{f}^k(\tilde{D}_i)\cap \sigma(\tilde{D}_k)\neq\phi$, where $\tilde{D}_k$ is a connected component of $p^{-1}(D_{i+k\textrm{ mod}(N-1)})$ and $\sigma\in \Gamma\setminus\{e\}$. This implies that $p(\tilde{D}_k\cup \tilde{f}^k(\tilde{D}_i)\cup \sigma(\tilde{D}_k))$ is contained in a essential element of $\Pi_0(\mathcal{M}^c)$ which is impossible since $\mathcal{M}$ is a type 1 minimal set.

Thus, by the point (3b) in Theorem \ref{Boy} we have that $\textrm{Im}(\alpha)$ has at most $N$ elements, which is absurd since $\textrm{Im}(\alpha)=S'/_{\partial S'}$.
\end{proof}

\begin{cor}\label{corforpA}

Let $S$ be a closed and oriented hyperbolic surface and $f\in\homeo$ homotopic to a pseudo-Anosov homeomorphism. Then every minimal set of $\mathcal{M}$ is type 3.

\end{cor}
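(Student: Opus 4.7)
The plan is to use Theorem~\ref{classnielthu} to exclude type 1 minimal sets and then to rule out type 2 directly with a Nielsen-Thurston reducibility argument; since Theorem~\ref{teoA} gives only three possibilities, this forces type 3. The type 1 case is essentially a free consequence: pseudo-Anosov maps lie in $\mathcal{A}$ by definition, so Theorem~\ref{classnielthu} immediately forbids a type 1 minimal set for $f$. The actual work lies in eliminating type 2.

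For this, I would argue by contradiction. Suppose $\mathcal{M}$ is a type 2 minimal set. By Corollary~\ref{peri} there exists an essential component $V\in\Pi_0(\mathcal{M}^c)$ which is periodic, say $f^p(V)=V$; by Theorem~\ref{prel} the inclusion $V\hookrightarrow S$ is $\pi_1$-injective, and by Corollary~\ref{puncturedsurf} $V$ is homeomorphic to a closed surface with finitely many punctures. For each puncture of $V$ I would pick a simple loop $\gamma$ in $V$ bounding a collar neighborhood of the corresponding end. Since $V$ is essential it is not a disk, so each such $\gamma$ is non-trivial in $\pi_1(V)$, and by injectivity $\gamma$ is also non-trivial in $\pi_1(S)$, i.e.\ essential in $S$. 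This yields a finite collection $\{\gamma_1,\ldots,\gamma_m\}$ of disjoint essential simple closed curves contained in $V$.

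Because $V$ has only finitely many ends and $f^p$ permutes them, a sufficiently high power $f^q$ fixes every end of $V$. For each fixed end, $f^q(\gamma_i)$ is again a simple loop in $V$ bounding a collar of the same end, and any two such loops are isotopic inside the annular end of $V$, hence in $S$. Consequently $f^q$ is isotopic to a homeomorphism preserving (up to isotopy) the non-empty system $\{\gamma_1,\ldots,\gamma_m\}$ of essential simple closed curves, and is therefore Nielsen-Thurston reducible. But if $f\sim\phi$ with $\phi$ pseudo-Anosov, then $f^q\sim\phi^q$, and $\phi^q$ is pseudo-Anosov; this contradicts the mutual exclusivity of reducible and pseudo-Anosov isotopy classes. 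Hence type 2 is impossible, and $\mathcal{M}$ must be of type 3.

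The main technical point will be the passage from set-wise invariance $f^p(V)=V$ to an isotopy-level reducing system, which hinges on essentialness of the end loops $\gamma_i$ in $S$; this is precisely what $\pi_1$-injectivity of $V$ (case~2 of the preliminary classification) delivers, so the argument closes cleanly.
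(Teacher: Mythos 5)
Your proposal is correct and follows essentially the same logic as the paper's own proof: type 1 is excluded by Theorem~\ref{classnielthu}, and type 2 is excluded because it forces a power of $f$ to preserve the free homotopy class of an essential simple closed curve, contradicting pseudo-Anosov. The only difference is bookkeeping: the paper deduces the invariant essential curve directly from the surgery machinery of Section~\ref{type2} (Corollary~\ref{peri} together with Lemma~\ref{no}, which already provides an $n_0$ with $g=f^{n_0}$ fixing each $[\beta]\in\mathcal{B}$), whereas you re-derive that fact from scratch by taking puncture loops of a periodic essential complementary component $V$, checking essentiality via $\pi_1$-injectivity, and passing to a power of $f$ fixing the ends of $V$. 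Both routes land on the same contradiction with Nielsen--Thurston theory; yours is self-contained, the paper's is shorter by reusing what was proved en route to Theorem~\ref{typ2str2}.
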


Before to give the proof, we want to say that if is considered a pseudo-Anosov homeomorphism $\phi\in\homeo$, due to the expansiveness of such a system one can apply a result due to R. Mañé (\cite{mane}) to proof that every minimal set is either a periodic orbit or a minimal cantor set.

\begin{proof}

Theorem \ref{classnielthu} asserts that type 1 minimal set can not occur. On the other hand, we have seen in section \ref{type2} that the existence of a type 2 minimal set of $f$
would imply that $f_{\#}^n([\beta])=[\beta]$ for some $n\in\N$ and $\beta\subset S$ essential loop, which can not happen since $f$ is in the homotopy class of a pseudo-Anosov
homeomorphism. Then, every minimal set of $f$ has to be type 3.

\end{proof}

\end{section}

\begin{section}{Examples.}\label{examples}

In this final section we state examples for each non-trivial case of the Theorem A and Theorem B.

\begin{subsection}{Type 1 minimal sets.}

We say that a flow $\phi:S\times \R\rightarrow S$ is irreducible if:

\begin{itemize}
\item[(i)] $\phi$ has a unique minimal set $\mathcal{M}\subset S$;
\item[(ii)] $\mathcal{M}$ intersect any essential loop in $S$.
\end{itemize}

It is known in general the existence of $t\in\R$ such that $\mathcal{M}$ is a minimal set of the time-$t$ map $f_t$ of the flow (see \cite{timet}). Due to property (ii) $\mathcal{M}\subset S$ is a type 1 minimal set of $f_t$. Notice that by construction the map $f$ is homotopic to the identity, which agrees with Theorem \ref{classnielthu}.

Irreducible flows in any closed surface are constructed in \cite{flows}. For the sake of completion we give a hint below as how to construct such examples.

One can always construct an oriented geodesic lamination $\Lambda$ in the surface $S$ of genus $g>1$ such that $S\backslash \Lambda$ is the union of two disks that lift to the universal covering space to ideal $2g$- gones with alternating orientations on its sides.  Indeed, a standard way of constructing pseudo-Anosov maps is by lifting an Anosov diffeomorphism on $\T^2$ to a branched covering $\pi: S\to \T^2$.  If $\pi$ has exactly two ramification points where $\pi$ is $g-1$, then the stable (or unstable) lamination of the corresponding pseudo-Anosov has this property.

So, one can complete the lamination to a flow in $S$ with a fixed point of index $1-g$ in each disk.
Moreover, the lamination is a minimal set for the flow, and most importantly, there exists $t\in \R$ such that $\Lambda$ is minimal for the time $t$ of this flow (\cite{timet}).

Notice that by construction the map $f$ is homotopic to the identity, which agrees with Theorem \ref{classnielthu}.

\end{subsection}

\begin{subsection}{Type 2 minimal sets.}

Examples in this category can be constructed defining a homeomorphism of an essential annulus $A\subset S$ and then extending it to a homeomorphism $f$ of the whole surface $S$.  A minimal set of the annulus homeomorphism will of course be a minimal set of $f$.  The simplest example in this spirit is to take an irrational rotation in $A$, obtaining simple closed curves as minimal sets.  Of course, much wilder minimal sets can be obtained with this method.  The following Theorem appears in \cite{notassylvain} but gathers together previous ideas of Handel \cite{handel}, Herman \cite{herman} and Fayad-Katok \cite{fk}.

\begin{teo} There exists a homeomorphism of the annulus $S^1\times \R$ which has a minimal invariant set $\Lambda$ which is connected, not locally connected at any point and has positive Lebesgue measure.
\end{teo}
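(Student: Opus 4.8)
The plan is to realize $\Lambda$ inside the closed annulus $\A=S^1\times[0,1]$, and then to embed $\A$ in $S^1\times\R$, extending the map by the rigid rotation $R_\alpha$ on the complement (the minimal set will lie in $\inte(\A)$, which is unaffected). On $\A$ I would run the Anosov--Katok successive--conjugation scheme, keeping every conjugating homeomorphism area preserving so that the limit preserves Lebesgue measure. Concretely, choose rationals $\alpha_n=p_n/q_n$ converging to a Liouville number $\alpha$, and area--preserving homeomorphisms $g_n$ of $\A$, supported away from $\partial\A$ and commuting with the rigid rotation $R_{1/q_{n-1}}$ (hence with $R_{\alpha_{n-1}}$); set $H_n=g_n\circ\cdots\circ g_1$, $H_0=\id$, and $f_n=H_n^{-1}\circ R_{\alpha_n}\circ H_n$, where $R_\beta(x,t)=(x+\beta,t)$. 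Using that $g_n$ commutes with $R_{\alpha_{n-1}}$ one gets $f_n=H_{n-1}^{-1}\bigl(R_{\alpha_{n-1}}\cdot g_n^{-1}R_{\alpha_n-\alpha_{n-1}}g_n\bigr)H_{n-1}$, so if $\alpha_n-\alpha_{n-1}$ is small enough relative to the moduli of continuity of $H_{n-1}^{\pm1}$ and of $g_n^{\pm1}$, then $d_{C^0}(f_n,f_{n-1})<2^{-n}$. Hence $f=\lim_n f_n$ is a homeomorphism of $\A$, and being a uniform limit of area--preserving homeomorphisms it preserves Lebesgue measure.

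The conjugacies $g_n$ are to be chosen so that the minimal set is a prescribed ``fat pseudo--circle''. I would keep a nested sequence of essential open annuli $K_n\subset\inte(\A)$ with $\overline{K_{n+1}}\subset K_n$, $\bigcap_n K_n=\Lambda$, with three features: (a) $K_{n+1}$ is \emph{crooked} inside $K_n$ in the sense of Bing (relative to the $q_n$--fold symmetry coming from $\alpha_n$) --- this is Handel's ``thinning'', and, pushed far enough at each stage, it makes $\Lambda$ hereditarily indecomposable, or at least crooked enough that it fails to be locally connected at \emph{every} one of its points; (b) $K_{n+1}$ is $\varepsilon_n$--dense in $K_n$ with $\varepsilon_n\to0$, the usual device that forces the limiting dynamics to be minimal on $\Lambda$; (c) the thinning is only partial, so little area being lost at each step that $\operatorname{Leb}(K_n)$ stays above a fixed $\delta>0$. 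One then takes $g_n$ area preserving, equal to the identity outside $K_{n-1}$, $R_{1/q_{n-1}}$--equivariant, and realizing the fold carrying a model of $K_n$ into $K_{n-1}$; the equivariance guarantees that each $K_n$ is $f$--invariant, $f(K_n)=K_n$, whence $f(\Lambda)=\Lambda$. Since the $g_n$ preserve area and $K_{n-1}$, $\operatorname{Leb}(K_n)$ equals the measure of the fixed model and stays $\ge\delta$, so $\Lambda=\bigcap_nK_n$ is a continuum (a nested intersection of continua) with $\operatorname{Leb}(\Lambda)\ge\delta>0$; in particular $\Lambda$ is nondegenerate, and by (a) it is not locally connected at any of its points.

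It remains to check that $f|_\Lambda$ is minimal, which is the standard Anosov--Katok argument: for $m\ge n$, the map $f_m^{\,q_n}$ equals, read in the $H_n$--coordinates, a homeomorphism $C^0$--close to the identity on $K_n$, so $f^{\,q_n}$ displaces points of $\Lambda$ very little; combined with the $\varepsilon_n$--density of $K_n$ and the fact that the level--$n$ rotation $R_{\alpha_n}$ with $\alpha_n=p_n/q_n$ visits every portion of $K_n$, one concludes that the $f$--orbit of any point of $\Lambda$ is $\varepsilon_n$--dense in $\Lambda$ for every $n$, hence dense; since $\Lambda$ is closed and invariant, $f|_\Lambda$ is minimal. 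Finally one transports the construction to $S^1\times\R$ via $\A\hookrightarrow S^1\times\R$ and extends $f$ by $R_\alpha$ on $(S^1\times\R)\setminus\A$; this is continuous because $H_n$ is the identity near $\partial\A$, so $f|_{\partial\A}=R_\alpha$.

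The point where the work really lies is the simultaneous satisfaction of (a), (b) and (c): $\Lambda$ must be as crooked as a pseudo--circle (so that local connectedness fails at every point), as spread out as minimality demands, and yet of positive measure --- which forbids the drastic thinning used in the classical pseudo--circle constructions. Quantifying the folds $g_n$ so that each one introduces enough crookedness while destroying only a summable amount of area, and staying $R_{1/q_{n-1}}$--equivariant and area preserving, is the delicate step, and is exactly where the constructions of Handel (crooked thinning), Herman and Fayad--Katok (fast approximation with controlled measure loss) must be combined; a subsidiary bookkeeping point is the choice of the gaps $\alpha_n-\alpha_{n-1}$, which has to shrink fast against the (degenerating) moduli of continuity of the $H_n^{\pm1}$, forcing $\alpha=\lim\alpha_n$ to be Liouville.
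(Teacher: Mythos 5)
The paper contains no proof of this theorem; it is quoted from Crovisier's notes \cite{notassylvain}, which assemble the constructions of Handel \cite{handel}, Herman \cite{herman} and Fayad--Katok \cite{fk}. Your sketch follows exactly the strategy implicit in those references: the Anosov--Katok fast-approximation-by-conjugation scheme with area-preserving, $R_{1/q_{n-1}}$-equivariant conjugacies $g_n$; a nested sequence of essential sub-annuli $K_n$ whose intersection is $\Lambda$; crookedness \`a la Bing/Handel to kill local connectedness; $\varepsilon_n$-density of $K_{n+1}$ in $K_n$ to force minimality; and summable area loss to keep $\operatorname{Leb}(\Lambda)>0$.

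That said, what you have written is a road map rather than a proof. The convergence, invariance, measure-preservation and minimality parts are standard and you outline them correctly; but the substantive step --- producing folds $g_n$ that realize crookedness, density and controlled area loss \emph{simultaneously} while remaining area preserving, supported in $K_{n-1}$ and $R_{1/q_{n-1}}$-equivariant --- is explicitly flagged as ``the delicate step'' and not carried out. In particular, the inference you make, that a \emph{partial} crookedness (forced on you by the need to retain positive measure) still yields failure of local connectedness at \emph{every} point of $\Lambda$, is not automatic: full Bing crookedness gives hereditary indecomposability (hence nowhere local connectedness), but once the thinning is attenuated you have to argue that enough indecomposability survives. That is precisely Handel's contribution --- sufficient crookedness for hereditary indecomposability can be introduced with arbitrarily small measure loss at each stage --- and it is the lemma your argument must either prove or invoke explicitly for the sketch to close up into a proof.
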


If one construct examples in this way, one obtains type 2 connected minimal sets ${\cal M}$.  Indeed, there is an essential component $U\in \Pi_0({\cal M}^c)$ which is a surface of genus $g-1$.  One may of course generalize this example by allowing the minimal set to have more than one connected component, and therefore different homotopy types for the essential components in $\Pi_0({\cal M}^c)$.  Take a symmetry $\varphi:S\to S$ taking an essential closed annulus $A\subset S$  to an essential closed annulus $ A'\subset S$, $A\cap A'=\emptyset$.  Let $f\in \homeo$ supported in a neighborhood $U$ of $A$, leaving $A$ invariant and such that the restriction of $f$ to $A$ has a minimal set ${\cal C}$.  Let $g= \varphi\circ f\circ \varphi ^{-1}(x)$. Then, $g$ is supported in $\varphi(U)$, leaves $A'$ invariant, and ${\cal C}'=\varphi ({\cal C})$ is a minimal set for $g|_{A'}$.  Define $h\in\homeo$ by $h=f\circ g$.  Now, ${\cal C}\cup {\cal C}'$ is a minimal set for the map $\varphi \circ h$.

\end{subsection}

\begin{subsection}{Type 3 minimal sets.}

Non-trivial examples of this type of minimal sets are those which are not periodic orbits nor Cantor sets. These examples can be easily constructed in such a way that the minimal set has finitely many connected components which are not a singleton but periodic. In \cite{Ferry} non-trivial examples with uncountable many connected components, some of them being not a singleton, are constructed.

\end{subsection}

\end{section}

\vspace*{0.2in}

\author{$\ $ \\

Alejandro Passeggi\\
 Emmy Noether Group: Low-dimensional and non-autonomous DynamicsEmmy,\\
 TU-Dresden,\\
 Dresden, Germany.\\
 \texttt{alepasseggi@gmail.com}}

\author{$\ $ \\

Juliana Xavier\\
 I.M.E.R.L,\\
 Facultad de Ingenier\'ia,\\
Universidad de la Rep\'ublica,\\
 Julio Herrera y Reissig,\\
 Montevideo, Uruguay.\\
 \texttt{jxavier@fing.edu.uy}}

\end{document}